\documentclass{article}

\usepackage{arxiv}
\usepackage{amsmath}
\usepackage{amssymb}
\usepackage[utf8]{inputenc} 
\usepackage[T1]{fontenc}    
\usepackage{hyperref}       
\usepackage{url}            
\usepackage{booktabs}       
\usepackage{amsfonts}       
\usepackage{nicefrac}       
\usepackage{microtype}      
\usepackage{lipsum}
\usepackage{xspace}
\usepackage{breqn}
\usepackage{algorithm}
\usepackage{algorithmic}
\usepackage{graphicx}
\usepackage{amsthm}
\usepackage{mathrsfs}
\usepackage{mathtools}
\usepackage{tikz}

\newtheorem{lemma}{Lemma}[section]
\newtheorem{theorem}[lemma]{Theorem}
\newtheorem{corollary}{Corollary}

\newtheorem*{remark*}{Remark}
\newtheorem{assumption}{Assumption}
\newtheorem{definition}{Definition}

\newcommand{\method}{\textsf{SPAN}\xspace}

\newcommand{\grad}{\ensuremath{\nabla}}

\makeatletter
\newcommand\mathcircled[1]{%
  \mathpalette\@mathcircled{#1}%
}
\newcommand\@mathcircled[2]{%
  \tikz[baseline=(math.base)] \node[draw,circle,inner sep=1pt] (math) {$\m@th#1#2$};%
}

\title{SPAN: A Stochastic Projected Approximate\\ Newton Method}

\author{
  Xunpeng Huang\\
  Bytedance AI Lab\\
  Beijing, China\\
  \texttt{huangxunpeng@bytedance.com}
    \And
  Xianfeng Liang\\
  Department of Computer Science\\
  University of Science and Technology China\\
  Hefei, China\\
  \texttt{zeroxf@mail.ustc.edu.cn}
    \And
  Zhengyang Liu\\
  Bytedance AI Lab\\
  Beijing, China\\
  \texttt{liuzhengyang.lozycs@bytedance.com}\\
    \And
  Yitan Li\\
  Bytedance AI Lab\\
  Beijing, China\\
  \texttt{liyitan@bytedance.com}\\
    \And
  Linyun Yu\\
  Bytedance AI Lab\\
  Beijing, China\\
  \texttt{yulinyun@bytedance.com}\\
    \And
  Yue Yu\\
  Department of Computer Science\\
  Tsinghua University\\
  Beijing, China\\
  \texttt{yu-y14@mails.tsinghua.edu.cn}
    \And
  Lei Li\thanks{Corresponding author}\\
  Bytedance AI Lab\\
  Beijing, China\\
  \texttt{lileilab@bytedance.com}
}

\begin{document}
\maketitle

\begin{abstract}
Second-order optimization methods have desirable convergence properties.
However, the exact Newton method requires expensive computation for the Hessian and its inverse.
In this paper, we propose \method, a novel approximate and fast Newton method.
\method computes the inverse of the Hessian matrix via low-rank approximation and stochastic Hessian-vector products.
Our experiments on multiple benchmark datasets demonstrate that \method outperforms existing first-order and second-order optimization methods in terms of the convergence wall-clock time.
Furthermore, we provide a theoretical analysis of the per-iteration complexity, the approximation error, and the convergence rate.
Both the theoretical analysis and experimental results show that our proposed method achieves a better trade-off between the convergence rate and the per-iteration efficiency.
\end{abstract}

\section{Introduction}
\label{sec:intro}
Mathematical optimization plays an important role in machine learning.
Many learning tasks can be formulated as a problem of minimizing a finite sum objective:
\begin{equation}
    \label{eq:obj_sum}
    \min\limits_{x\in \mathbb{R}^d} F(x) \mathop{=}^{\Delta} \frac{1}{N}\mathop{\sum}\limits_{i=1}^N f_i(x),
\end{equation}
where $N$ is the  number of samples, $d$ is the dimension of parameters and $f_i(x)$ denotes the loss function for sample $i$.
In order to solve Eq.\eqref{eq:obj_sum}, many first- and second-order methods have been proposed and has the following update paradigm:
\begin{equation}
    \label{eq:newton_update}
        x_{t+1} = x_t - \eta_t H^{-1}(x_t)g(x_t), t=0,1,2,\ldots,
\end{equation}
where $g(x_t)$ is the gradient and  $\eta_t$ is the step size at $t$-th iteration.
The term $H^{-1}(x_t)$ can be set differently in different methods.
First-order methods set $H^{-1}(x_t)$ as an identity matrix.
The resulting updating procedure becomes gradient descent (GD) or stochastic gradient descent (SGD)
depending on whether the gradient $g(x_t)$ is calculated over the whole sample set or one random sample~\cite{robbins1985stochastic,li2014efficient,cotter2011better}.
A series of first-order linearly convergent methods and their variance reduction variants were proposed to accelerate the above iteration updates, including SVRG~\cite{johnson2013accelerating}, SAGA~\cite{defazio2014saga}, SDCA~\cite{shalev2014accelerated}, etc.
Their main intuition is to balance the computational cost of $g(x_t)$ and the approximation gap between the stochastic gradient $g(x_t)$ and the global gradient $\nabla F(x_t)$.

Compared with first-order optimizers, second-order optimization methods regard $H^{-1}(x_t)$ in Eq.~\eqref{eq:newton_update} as the Hessian inverse (the standard Newton Method) or certain carefully constructed Hessian inverse (quasi-Newton methods).
The matrix $H^{-1}(x_t)$ can be thought to adjust the step size and gradient coordinates through the high order information or the quasi-Newton condition.
Second-order methods usually achieve a better convergence rate than first-order ones.

However, second-order algorithms are not widely used in large-scale optimization problems due to
expensive computation cost.
Computing the Hessian matrix and its inverse requires high computation complexity and consumes large memory.
Standard Newton method takes $\mathcal{O}(Nd^2+d^3)$ per iteration, where $N$ and $d$ are the number of samples and the number of unknown parameters, respectively.
Quasi-Newton methods like BFGS and L-BFGS~\cite{liu1989limited} are faster,
but still requires $\mathcal{O}(Nd+d^2)$.
However, they do not maintain the local quadratic convergence rate.
For problems with large parameters, it is not affordable even if the inverse calculation could fit in memory.

Our goal is to develop an approximate yet efficient Newton method with provable convergence guarantee.
We aim to reduce per-iteration computation cost for the Hessian inverse calculation while maintaining the approximation accuracy.
To this end, we propose \emph{Stochastic Projected Approximate Newton} method (\method), a novel and generic  approach to speed up second-order optimization calculation.
The main contributions of the paper are as follows:
\begin{itemize}
    \item We propose a novel second-order optimization method, \method, to achieve a better trade-off between Hessian approximation error and per-iteration efficiency. Inside the method, we propose a stochastic sampling technique to construct the Hessian approximately.
    Since it only requires first-order oracles and Hessian-vector products, the Hessian and its inverse can be computed very efficiently.
    \item We present a theoretical analysis about the Hessian approximation error and the convergence rate. \method achieves linear-quadratic convergence with a provable Hessian approximation error bound.
    \item We conduct experiments on multiple real datasets against existing state-of-the-art methods. The results validate that our proposed method achieves state-of-the-art performance in terms of the wall-clock time, with almost no sacrificing on the construction accuracy of the Hessian.
\end{itemize}

\section{Related Work}
\label{sec:related}
In order to improve the per-iteration efficiency, several stochastic second-order methods have tried to seek the trade-off between per-iteration computational complexity and convergence rate, which can be divided into two main categories.
\paragraph{Stochastic Newton Methods.} A series of sub-sampled Newton methods are proposed to solve the problems where the sample size $N$ is far larger than the feature size $d$. In these methods, the Hessian matrix is approximated with a small subset of training samples.
NewSamp~\cite{erdogdu2015convergence} and similar methods by~\cite{roosta2016sub,byrd2011use} sample from function set $\{f_i\}$ randomly and construct a regularized $m$-rank approximate Hessian with truncated singular value decomposition
to improve the per-iteration efficiency. However, they need to compute the sub-sampled Hessian, which has an $\mathcal{O}(d^2)$ computational cost even if there is only one single instance. Removing the requirements of second-order oracle in NewSamp-type methods, our \method further accelerates the iteration with stochastic low-rank approximation techniques, which improves the complexity by a factor nearly $\mathcal{O}(d/m)$.

Sketch Newton method in~\cite{pilanci2017newton} adopts sketching techniques to approximate Hessian.
A non-uniform probability distribution was introduced to sample rows of $\sqrt{\nabla^2 F(x)}$ in~\cite{xu2016sub}.
Both of them use new approximate Hessian construction and proper sampling methods to improve Hessian estimation efficiency. Differ from such sketch Newton methods, our \method does not need the \emph{Hessian Decomposition} assumption for sketched updates in~\cite{xu2016sub,pilanci2017newton}.

Rather than estimating the Hessian, LiSSA by~\cite{agarwal2017second} approximates the Hessian inverse with matrix Taylor expansion.
The most elegant part of LiSSA is that the Hessian inverse estimation is unbiased, and the approximation error only depends on the matrix concentration inequalities. Compared with LiSSA, \method guarantees more robust per-iteration complexity when the objective function is not \emph{Generalized Linear Model} (GLM).  Moreover, we do not require the initial solution is close to the optimum. Additionally, the procedure of calculating the approximate Hessian  for each sample in \method is independent, which means our method has better parallelism compared with LiSSA.

\paragraph{Stochastic Quasi-Newton Methods.} The quasi-Newton methods can also be improved with stochastic approximation techniques.
S-LBFGS by~\cite{moritz2016linearly} adopts the randomization to the classical L-BFGS and integrates the widely used gradient variance reduction techniques.
Subsequently, SB-BFGS in~\cite{gower2016stochastic} extends BFGS with matrix sketching to approximate Hessian inverse.
Stochastic quasi-Newton methods have a better per-iteration complexity compared with stochastic Newton methods. However, most of them cannot explicitly demonstrate the benefit of introducing the curvature information in the convergence analysis. Such problem is solved in our \method by establishing the Hessian approximation error bound which can hardly be analyzed in stochastic quasi-Newton methods.

\section{The Proposed \method Method}
\label{sec:method}
\begin{table}[t]
    \centering
    \small
    \begin{tabular}{ lll}
    \toprule
        Symbols & Description\\
    \midrule
        $N$ & The number of samples in~Eq.\eqref{eq:obj_sum}\\
        $d$ & The dimension of decision variables in~Eq.\eqref{eq:obj_sum}\\
        $B$ & A subset with $B \subseteq [N] = \{1,2,\ldots,N\}$ \\
        $b$ & The size of $B$ with $b = \left|B\right|$\\
        $F$ & The objective function in~Eq.\eqref{eq:obj_sum}\\
        $f_i(x)$ & The loss function for sample $i$ in~Eq.\eqref{eq:obj_sum}\\
        $f_B(x)$ & The batch loss with $f_B(x)=\frac{1}{b}\sum_{i\in B} f_i(x)$\\
        $g_B(x)$ & The gradient of batch loss with $g_B(x) =\grad f_B(x)$\\
        $H_B(x)$ & The batch Hessian with $H_B(x) = \grad^2 f_B(x_t)$\\
        $\sigma_i(A)$ & The $i$-th top non-zero singular vector of matrix $A$\\
        $p_i(A)$ & The singular vector of matrix $A$ corresponding to $\sigma_i(A)$\\
        $\mathrm{rank}(A)$ & The rank number of matrix $A$\\
        $\left\|\cdot\right\|$ & The Euclidean norm of a vector or $L_2$ norm of a matrix\\
    \bottomrule
    \end{tabular}
    \caption{\small Important mathematical notations in this paper.}
    \label{tab:notations}
\end{table}

In this section, we will first present the overall idea and intuition of our proposed \method.
Then, we will describe three technical components and the full algorithm. For better illustration, we list some important notations and their descriptions in Table~\ref{tab:notations}.

The goal of \method method is to optimize Eq.~\eqref{eq:obj_sum} with respect to the decision variable $x$ (i.e., model parameters) using the mini-batched iterative update:
\begin{equation}
\label{eq:batch_newton_update}
x_{t+1} = x_t - \eta_t H_B^{-1}(x_t)g_B(x_t),
\end{equation}
where $t$ is the iteration index.
Note that the straightforward calculation of Eq.~\eqref{eq:batch_newton_update} requires $\mathcal{O}(Nd + bd^2 + d^3)$ which is time-consuming for models with a large $d$.

To make the computation faster, instead of calculating the batch Hessian $H_B$ (the abbreviation of $H_B(x_t)$) and then taking the inverse explicitly, our idea is to replace $H_B$ in Eq.~\eqref{eq:batch_newton_update} with an approximate Hessian $\hat{H}_B$.
Ideally, $\hat{H}_B$ should be bounded within a small region around the true Hessian $H_B$.
We propose to use the projected approximation for the Hessian $\hat{H}_B = PH_BP^T$ where $P$ is a carefully constructed orthogonal projector~\cite{horn2012matrix}.
To further ensure  $\hat{H}_B$ invertible, we add an additional perturbation term $\Delta H$, $\hat{H}_B = PH_BP^T + \Delta H$.
Note that such formulation is sufficient to approximate $H_B$, since we can consider $P^* =\Sigma_{i=1}^kp_i(H_B)p_i(H_B)^T$ to obtain the optimal $k$-rank approximation of $H_B$.
However, in practice, we can hardly find $p_i(H_B)$ exactly without knowing the batch Hessian.
Thus, it is challenging to construct $P$ with desired efficiency and approximation accuracy.
At the first glance, this seems infeasible, while our main intuition is based on the observation that affine transformation $A$ over random vectors tends to have larger components on the top singular vectors of $A$. Such intuition is helpful to find an accurate approximation of orthonormal projector $P^*$. More details follow later.

In the remaining of this section, we present the construction of an approximate Hessian and the resulting optimization algorithm.
In particular, we design the algorithm components guided by the following questions.
\begin{enumerate}
	\item How to design a structure for $P$ so that $\hat{H}_B = PH_B P^T$ can be computed efficiently without knowing $H_B$?
	\item How to make the inverse robust --- permitting $\hat{H}_B$ to be invertible in any circumstance?
	\item How to balance the Hessian approximation error $\|\hat{H}_B - H_B\|$ and iteration complexity flexibly?
\end{enumerate}

\subsection{Stochastic Projected Approximation}
\label{sec:projected-hessian}
We construct the approximate Hessian as $\hat{H}_B = P H_B P^T$ which can be calculated without knowing $H_B$.
A proper $P$ essentially decides the direction where the Hessian $H_B$ would project onto.
We decompose $P$ into the product of the form $P = U U^T$, where $U\in \mathbb{R}^{d\times l}$ is an orthonormal matrix.
With that, we construct an approximate Hessian $\hat{H}_B$ by
\begin{equation}
    \label{eq:hessian_low_rank_approximation}
    \hat{H}_B(x) = U U^T H_B(x)U U^T.
\end{equation}
One can easily verify that the Moore–Penrose inverse of $\hat{H}_B$ can be computed efficiently via $\hat{H}_B^{\dagger}= U\left(U^TH_BU\right)^{-1}U^T$
since the size of $U^T H_B U\in \mathbb{R}^{l\times l}$ is much smaller than $H_B$ and can be calculated by the product of $U^T$ and $H_B U$.

In the following, we develop a method to obtain a $U$ while keeping $\|\hat{H}_B - H_B\|$ small.
We derive a method to calculate $H_B U$ without requiring Hessian $H_B$.

Our method is to randomly choose a set of column vectors $\Omega$ and project them to the space expanded by the singular vectors of $H_B$. From the projection, one can extract an orthonormal basis $U$ to expand a low-dimensional space for $H_B$ to project to.
The procedure is inspired by the \emph{Proto-Algorithm}~\cite{halko2011finding}, combining with the secant equation for Hessian-vector products calculation.
We utilize the standard Gaussian distribution to generate such $\Omega\in  \mathbb{R}^{d\times l}$ and calculate the projection $Y=H_B \Omega$ directly.
To make the computation more efficient, the set of random vectors $\Omega$ only contains $l$ elements where $l\ll d$.

Notice that for any matrix $V$ whose $i$-th column vector is presented as $v_i$, the extended Hessian-vector product on a sample batch $B$, denoted as $\Psi_B(x,V)$, is
\begin{align*}
    \Psi_B(x,V) &\triangleq H_{B}(x) V = \left[\begin{matrix}H_B(x)v_1 & H_B(x)v_2 & \ldots \end{matrix}\right],\\
    H_B(x)v_i &= \frac{d}{d\alpha} g_{B}(x+\alpha v_i)\Big|_{\alpha= 0}.
\end{align*}
In practice, the Hessian-vector product $H_B(x)v_i$ can be calculated by the finite difference of gradients like Algorithm~\ref{alg:hessian_vector_product}.

$U$ is then constructed as follows:
\begin{enumerate}
    \item calculate the extended Hessian-vector product of $\Omega$ to get $Y=H_B(x) \Omega=\Psi_B(x, \Omega)$;
    \item calculate the basis $U$ via QR decomposition $Y = U R$.
\end{enumerate}

With the above-constructed $U$, $\hat{H}_B(x)$ can be further constructed again using the extended Hessian-vector product, $\hat{H}_B(x)=U U^T \Psi_B(x, U) U^T$.
But there is no need to calculate $\hat{H}_B$ explicitly, it suffices to calculate its Moore-Penrose inverse.
One can verify that with such constructed $\hat{H}_B$, the error in the term of $\|\hat{H}_B(x)-H_B(x)\|$ is well-bounded~\cite{halko2011finding}.

\begin{algorithm}[tb]
   \caption{The Hessian-Vector Product}
   \label{alg:hessian_vector_product}
\begin{algorithmic}
   \STATE {\bfseries Input:} $F$, $B$, $\hat{x}$, $v$
    \STATE Choose a large constant $C \rightarrow \infty$
    \STATE Calculate residual perturbation $\hat{v} = v/C$
    \STATE Form the gradient difference $\delta = g_B(\hat{x}+\hat{v}) - g_B(\hat{x})$
   \STATE {\bfseries Return:} $C\delta$
\end{algorithmic}
\end{algorithm}

\subsection{Robust Hessian Inversion}
\label{sec:robust_hessian_inversion}
The above constructed approximate Hessian is simplified with one flaw ($\hat{H}_B$ is actually not invertible) since the orthonormal matrix $U$ is low-rank.
This can be alleviated with a perturbed version of Eq.\eqref{eq:hessian_low_rank_approximation}.
\begin{equation}
    \label{eq:perturb-hessian}
    \hat{H}_B(x) = U U^T H_B(x) U U^T + \lambda\left(I - U U^T\right)
\end{equation}
where $\lambda$ is a carefully chosen constant with $\lambda > 0$.

The purpose of the perturbation term $\lambda \left(I - UU^T\right)$ is to introduce the invertibility of the approximate matrix $\hat{H}_B(x)$ presented in Eq.~\eqref{eq:hessian_low_rank_approximation}.
We will give an informal analysis here to show the importance of choosing a proper $\lambda$.
On one hand, a large $\lambda$ will impair captures of the main actions of $H_B$, since it increases the lower bound of the Hessian approximation error  $\|\hat{H}_B -H_B\|$ as
\begin{equation*}
    \begin{split}
        & \left\|UU^TH_B(x)U U^T +  \lambda\left(I - U U^T\right)-H_B(x)\right\| \\
    \ge & \left\|\lambda\left(I - U U^T\right)\right\| - \left\|UU^TH_B(x)U U^T -H_B(x)\right\|.
    \end{split}
\end{equation*}

On the other hand, from an iteration perspective, we can neither choose a tiny $\lambda$ since it will lose the benefit of the curvature information induced by approximate Newton.
In particular, if we regard the SVDs of $I-UU^T$ and $U^TH_{B}(x)U$ as follows 
\begin{equation*}
    I-UU^T = U_{\perp}U_{\perp}^T,\quad  U^TH_{B}(x)U = \hat{U}\Lambda \hat{U}^T.
\end{equation*}
The SVD of constructed Hessian $\hat{H}_B(x_t)$ in Eq.~\eqref{eq:perturb-hessian} and its inverse can be formulated as
\begin{equation*}
    \begin{split}
        \hat{H}_B(x) &= \left[\begin{matrix}U\hat{U} & U_{\perp}\end{matrix}\right]\left[\begin{matrix}\Lambda & 0 \\ 0 & \lambda I\end{matrix}\right]\left[\begin{matrix}\hat{U}^TU^T \\ U_{\perp}^T\end{matrix}\right]\\
        \hat{H}_B^{-1}(x) &= \left[\begin{matrix}U\hat{U} & U_{\perp}\end{matrix}\right]\left[\begin{matrix}\Lambda^{-1} & 0 \\ 0 & \lambda^{-1} I\end{matrix}\right]\left[\begin{matrix}\hat{U}^TU^T \\ U_{\perp}^T\end{matrix}\right].
    \end{split}
\end{equation*}
It can be observed that a tiny $\lambda$ will make the singular vectors associated with $\lambda^{-1}$ dominate the action of $\hat{H}_B^{-1}(x)$, which impairs the introduction of curvature information taken by $U\hat{U}$ and $\Sigma$.

Hence, a proper $\lambda$ is needed to balance the Hessian approximation error and the $l_2$ norm of constructed Hessian inverse. Specifically, we will further discuss $\lambda$(see Theorem~\ref{thm:phantom_convergence_frame}) and the Hessian approximation error $\|\hat{H}_B(x)-H_B(x)\|$(see Lemma~\ref{thm:l2_norm_upper_bound}) in Section~\ref{sec:analysis}. Rigorous proof will be deferred to our supplementary materials.

\subsection{Power Iteration}
\label{sec:better_approxiamtion}
In this part, we introduce a power iteration technique to balance $\|\hat{H}_B - H_B\|$ and iteration complexity more flexibly. That is to say,
with limit iteration complexity sacrificing, the construction of $\hat{H}_B$ in Eq.~\eqref{eq:perturb-hessian} can be improved through some auxiliary steps in the main algorithm.
Generally speaking, any matrix $U$ obtained as in Section~\ref{sec:projected-hessian} is valid for $\hat{H}_B$'s construction.
However, in practice, a better orthogonal projector $UU^T$ maintains that top singular vectors of $H_B(x_t)$ are rotated less after performing the projection on $H_B(x_t)$, e.g., $UU^TH_B(x_t)UU^T$.
The matrix $U$ is usually obtained from the projection $H_B(x_t)\Omega$ of random vectors.
Thus, we hope basis vectors of the projection have larger components on top singular vectors e.g., $p_1(H_B), p_2(H_B)$, compared with those on $p_{d-1}(H_B)$ and $p_d(H_B)$.
Such a requirement can be satisfied by taking the power of the Hessian.

Specifically, as shown in Figure~\ref{fig:power_iteration} showed, normalized $H_B\Omega$ (green dots) seems to like a unit ball, while they degenerate to a unit circle (red dots) when random vectors $\Omega$ are projected through $H_B^4$ (red dots)  because the component of projection on $[1, 0, 0]$ almost have become $0$. A similar phenomenon also happens for the component of projection on $[0, 1, 0]$ when taking a larger power for Hessian, i.e., $H_B^{10}\Omega$ (yellow dots). As a result, the normalized projections nearly collapse to two points (yellow dots), i.e., $[0, 0, 1]$ and $[0, 0, -1]$. That is to say, the component of projection on bottom singular vectors tends to vanish. Besides, such a phenomenon becomes more significant as the power of Hessian increases.
\begin{figure}
    \includegraphics[width=0.5\textwidth]{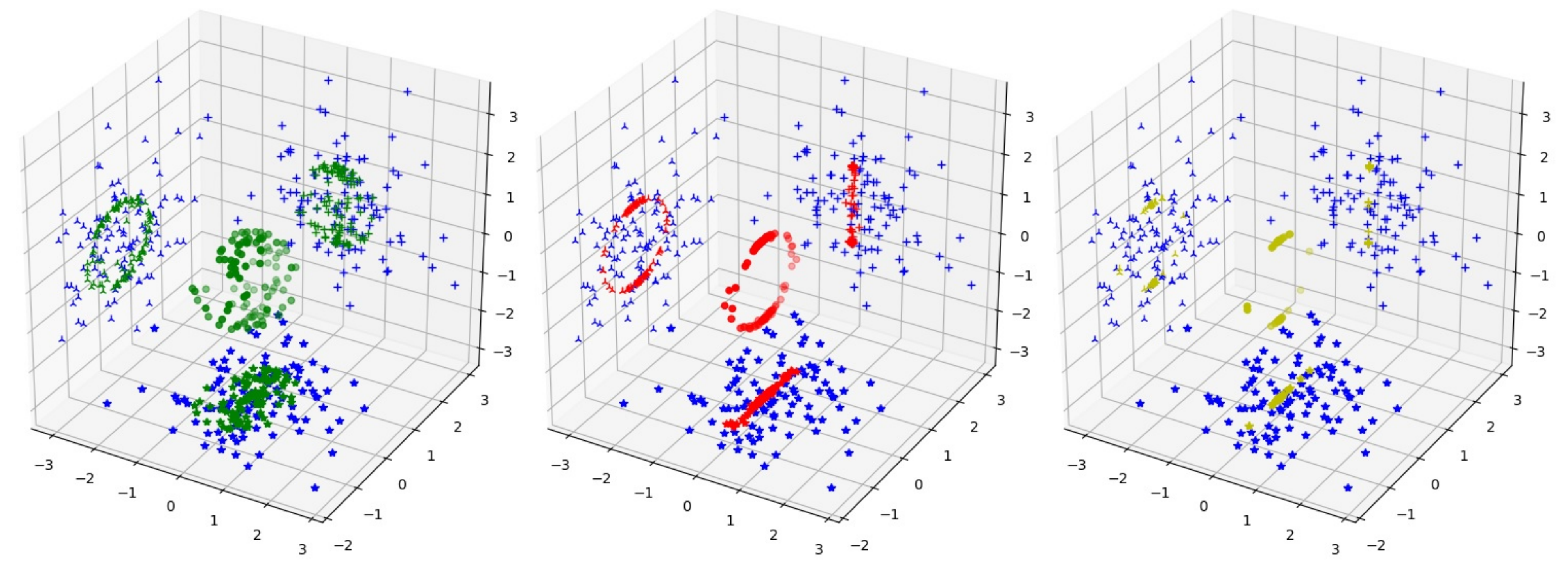}
    \caption{The normalized projection, i.e., $H_B\Omega$, $H_B^4\Omega$ and $H_B^{10}\Omega$ (LTR) of standard gaussian vectors $\Omega$ (blue points) where $H_B=\mathrm{diag}\{1,2,3\}$.}
    \vspace*{-10pt}
    \label{fig:power_iteration}
\end{figure}
With such an observation, we can calculate a better $U$ with the following steps:
\begin{enumerate}
    \item generate a \emph{standard Gaussian matrix} $\Omega$ and set $Y_0=\Omega$;
    \item iteratively use the extended Hessian-vector product to get $Y_{j}=[H_B(x)]^j \Omega=\Psi_B(x, Y_{j-1})$;
    \item calculate the basis $U$ via QR decomposition $Y_{q} = U R$.
\end{enumerate}

\begin{algorithm}
   \caption{\textbf{S}tochastic \textbf{P}rojected \textbf{A}pproximate \textbf{N}ewton}
   \label{alg:phantom}
\begin{algorithmic}[1]
   \STATE {\bfseries Input:} $F$, $x_0$, $T$, $m$, $l$, $q$, $\eta_t$
   \FOR{$t = 0$ to $T$}
        \STATE Select a uniformed sample batch $B \subseteq [N]$
        \STATE Generate a \emph{standard Gaussian matrix} $\Omega \in \mathbb{R}^{d\times l}$, and set $Y_0 = \Omega$
        \FOR{$j = 1$ to $2q+1$}
            \STATE $Y_{j} = \Psi_B(x_t,Y_{j-1})$
        \ENDFOR
        \STATE Compute the QR decomposition $Y_{2q+1} = UR$
        \STATE Set \begin{small}$Z = \Psi_B(x_t, U)$\end{small}, \begin{small}$\lambda_{\min,t} = \frac{1}{2}\sigma_{\min}(Z^TU)$\end{small} and \begin{small}$\lambda \le \min\left\{\sigma_{m+1,t}, \lambda_{\min,t}\right\}$\end{small}
        \STATE Recover the Hessian approximation inverse \begin{small}$\hat{H}_B^{-1}(x_t) = U\left(Z^TU\right)^{-1}U^T + \lambda^{-1}\left(I - UU^T\right)$
        \end{small}
        \STATE Calculate $x_{t+1} = x_t - \eta_t \hat{H}_B^{-1}(x_t)\nabla F(x_t)$
        \ENDFOR
        \STATE {\bfseries Return:} $x_{T+1}$
\end{algorithmic}
\end{algorithm}

\subsection{Details of \method}
\label{sec:overall_algorithm}
In this section, we show the complete algorithm about \method in Algorithm~\ref{alg:phantom}. Also, we explain the iteration complexity of \method, and compare it with the state-of-the-art optimizers.

In Algorithm~\ref{alg:phantom}, we use $\sigma_{i,t}$ as the abbreviation of $\sigma_i(H_B(x_t))$.
At each iteration, we first select a sample batch (Step~3) and generate some random matrix (Step~4).
With the random matrix $\Omega$, a proper $U$ can be found through the process we introduced in Section~\ref{sec:better_approxiamtion} (Step~5 to Step~8).
After that, we compute $H_B(x_t)U$ (Step~9) as an intermediate variable and select the perturbation constant $\lambda$.
Finally, we calculate the constructed Hessian inverse $\hat{H}_B^{-1}(x_t)$ (Step~10) like Eq.~\eqref{eq:perturb-hessian} and update the decision variables (Step~11).

\paragraph{Iteration Efficiency.} In order to illustrate the computational complexity of each iteration, we first introduce some condition numbers,
which are designed with respect to the component functions, i.e., $f_i(\cdot)$ and $f_B(\cdot)$. In such case, one typically assumes that each component is bounded by $\beta_{b,k}(x) \triangleq \max_B \sigma_k(H_B)$ and $\alpha_{b,k}(x) \triangleq \min_B \sigma_k(H_B)$ like LiSSA~\cite{agarwal2017second}, we define
\begin{equation*}
    \begin{split}
        \overline{\kappa}_{b,k} \triangleq \max_x \frac{\beta_{b,k}(x)}{\alpha_{N,d}(x)},\quad
        \hat{\kappa}_{b,k} \triangleq \frac{\max_x \beta_{b,k}(x)}{\min_x \alpha_{N,d}(x)},\quad
        \tilde{\kappa}_{b,k} \triangleq \max_{x}\frac{\beta_{b,k}(x)}{\alpha_{b,d}(x)}\quad \mathrm{and}\quad \dot{\kappa}_{b,k} \triangleq \max_{x, B}\frac{\sigma_{k}(H_B(x))}{\sigma_{d}(H_B(x))}.
    \end{split}
\end{equation*}
Such condition numbers have the following relations
\begin{equation}
    \label{eq:condition_number_rela}
    \begin{split}
        \overline{\kappa}_{b,k}&\le \hat{\kappa}_{b,k}\quad \mathrm{and}\quad \dot{\kappa}_{b,k}\le \tilde{\kappa}_{b,k}.\\
    \end{split}
\end{equation}

We compare per-iteration complexity among state-of-the-art optimizers, including \method, and list the results in Table~\ref{tab:complexity_comparisons} where we ignore $\log$ terms of $d$ and different $\kappa$s for brevity.
The iteration complexity of \method consists of three terms. The first term $\mathcal{O}(N d)$ represents the time complexity of the full gradient computation (Step~11).
The second term 
indicates the complexity of power iteration (Step~5 to Step~7). The calculation of $\Psi_B(\cdot)$ in Step~6 requires $\mathcal{O}(b l d)$ where the fact $q\propto \log d$ and $b = \Theta(\dot{\kappa}_{b,1}^2\dot{\kappa}_{b,m}^2\log d)$ will be detailedly demonstrated in our supplementary materials.
The last term $\mathcal{O}(l^2 d)$ shows the complexity of QR decomposition (Step~8).
In particular, the most time-consuming step in constructing $\hat{H}_B^{-1}(x_t)$ is to calculate $(Z^TU)^{-1}$ , which leads to an $\mathcal{O}(l^3)$ time complexity.
For a given $g(x_t)$, the computational cost of $\hat{H}_B^{-1}(x_t)g(x_t)$ is only $\mathcal{O}(l d)$  when the order of matrix multiplication is appropriately arranged.
Owing to the fact that $l\ll d$, such computational cost is much smaller than $\mathcal{O}(l^2 d)$ (the third term).
\newcommand{\tabincell}[2]{\begin{tabular}{@{}#1@{}}#2\end{tabular}}
\begin{table*}
    \centering
    \begin{tabular}{|c|c|}
    \hline
         Algorithm & Per-iteration Complexity \\
         \hline
        SVRG, SAGA, SDCA & $\mathcal{O}\left(N d+\left(\hat{\kappa}_{b,1}  d\right)\right)$ \\
        \hline
        NewSamp\cite{erdogdu2015convergence} & $\mathcal{O}\left(N d + \dot{\kappa}_{b,1}^2 d^2+m d^2\right)$\\
        \hline
        LiSSA\cite{agarwal2017second} & $\mathcal{O}\left(N d + \tilde{\kappa}_{b,1}^2\overline{\kappa}_{b,1} d^2 \right)$\\
        \hline
        \method (ours) & $\mathcal{O}\left(N d+ \dot{\kappa}^2_{b,1} \dot{\kappa}^2_{b,m} dl+l^2 d\right)$\\
    \hline
    \end{tabular}
    \vspace*{-5pt}
    \caption{Per-iteration complexity comparisons among stochastic first-order optimization methods, sub-sampled Newton Methods, LiSSA and ours. Notice that $\overline{\kappa}_{b,k}\le \hat{\kappa}$  and $\dot{\kappa}_{b,k}\le \tilde{\kappa}_{b,k}$.}
    \label{tab:complexity_comparisons}
    \vspace*{-15pt}
\end{table*}

Comparing with the NewSamp~\cite{erdogdu2015convergence} which updates decision variables with a sub-sampled constructed Hessian inverse, \method takes a significant acceleration at each iteration through the only first-order oracle and the Hessian-vector product requirements when the $m$-th singular value is close to the minimum singular value with $\dot{\kappa}^2_{b,m}\le \sqrt{d/m}$. In addition, per-iteration complexity in LiSSA~\cite{agarwal2017second} is even worse than NewSamp in general cases. However, in GLM models, LiSSA has better performance because of the fast calculation of Hessian-vector products. Even in GLM models, our SLAN can still be faster than LiSSA when the approximate Hessian is good enough, or the condition number $\overline{\kappa}_{b,1}$ is large with $\dot{\kappa}_{b,m}^2l\le \overline{\kappa}_{b,1}$. Notice that, the condition numbers of batch loss chosen, i.e., $\dot{\kappa}_{b,1}^2 \ge d$, will not be too large. Otherwise, per-iteration efficiency is governed by batch Hessian or Hessian-vector products calculation, which impairs the acceleration achieved by matrix approximation techniques.

\section{Theoretical Results}
\label{sec:analysis}
We will give a theoretical analysis of our results in this section.
We will first introduce some standard assumptions for the optimization problems, and then bound the error between our approximate Hessian and the sub-sampled one. Finally, we will show the local linear-quadratic convergence of our main algorithm (Algorithm~\ref{alg:phantom}), with the detailed coefficients of the convergence rate.
We further compare the convergence rate of our method with NewSamp~\cite{erdogdu2015convergence} and LiSSA~\cite{agarwal2017second}.
Due to space limitations, the details of proof arguments are provided in the supplementary materials.

\begin{assumption}(Hessian Lipschitz Continuity)
    \label{ass:1}
        For any subset $B\subset [N]$ and a second-order differentiable objective function $F$ like the Eq.\eqref{eq:obj_sum}, there exists a constant $M_b$ depending on $b$, such that $\forall x,\hat{x}\in \mathcal{D}$
        \begin{equation*}
            \begin{split}
            	\left\|H_B(x) - H_B(\hat{x})\right\|\le M_{b}\left\|x-\hat{x}\right\|.
            \end{split}
        \end{equation*}
\end{assumption}
\begin{assumption}(Gradient Lipschitz Continuity and Strong Convexity)
        \label{ass:2}
            For any subset $B\subset [N]$ and a
second-order differentiable function $F$ like the Eq.\eqref{eq:obj_sum}, there exists constants $\mu_{b}$ and $L_{b}$ depending on the $b$, such that for any $x,\hat{x}\in \mathcal{D}$
    \begin{equation*}
        \begin{split}
        \mu_{b}\left\|x-\hat{x}\right\| \le \left\|\nabla f_B(x) - \nabla f_B(\hat{x})\right\|\le L_{b}\left\|x-\hat{x}\right\|.
        \end{split}
    \end{equation*}
\end{assumption}
\begin{assumption}(Bounded Hessian)
    \label{ass:3}
    For any $i \in \{1,2,...,N\}$, and the Hessian of the function $f_i(x)$ in Eq.~\eqref{eq:obj_sum}, $\nabla^2 f_i(x)$ is upper bounded by an absolute constant $K$, i.e.,
    \begin{equation*}
        \begin{split}
        \max\limits_{i\le N}\left\|\nabla^2 f_i(x)\right\|\le K.
        \end{split}
    \end{equation*}
\end{assumption}

\begin{lemma}
    \label{thm:l2_norm_upper_bound}
    Suppose the Assumption~\ref{ass:1},~\ref{ass:2} and \ref{ass:3} hold. For every iteration $t$ in Algorithm~\ref{alg:phantom}, if the parameters satisfy: $m\le l- 4$, $\lambda \le \min\left\{\sigma_{m+1,t}, \frac{1}{2}\sigma_{\min}(Z^TU)\right\}$ and
    \begin{equation*}
        \begin{split}
            q\ge \Bigg\lceil \frac{1}{2}\log_{\frac{3}{2}} \left(34\sqrt{\frac{l}{l-m}}+\frac{16\sqrt{l}}{l-m+1}\cdot \sqrt{d-m}\right) \Bigg\rceil \propto \log d,
        \end{split}
    \end{equation*}
    with probability at least $1-6e^{m-l}$, we have
    \begin{equation*}
        \label{eqn:hessian_approximation_l2_norm_upperbound}
        \begin{split}
             & \left\|\hat{H}_B(x_t) - H_B(x_t)\right\| \le 3\sigma_{m+1,t} \triangleq \epsilon_H.
        \end{split}
    \end{equation*}
\end{lemma}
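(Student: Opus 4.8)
The plan is to bound $\|\hat H_B(x_t)-H_B(x_t)\|$ by splitting it according to the block structure induced by the projector $UU^T$. Write $\hat H_B = UU^T H_B UU^T + \lambda(I-UU^T)$ and decompose $H_B = UU^T H_B UU^T + UU^T H_B (I-UU^T) + (I-UU^T)H_B UU^T + (I-UU^T)H_B (I-UU^T)$. Subtracting, the on-diagonal $UU^T(\cdot)UU^T$ terms cancel, so
\begin{equation*}
\hat H_B - H_B = \lambda(I-UU^T) - (I-UU^T)H_B UU^T - UU^T H_B(I-UU^T) - (I-UU^T)H_B(I-UU^T).
\end{equation*}
Since $H_B$ is symmetric, the off-diagonal pieces have equal norm, and I would group the three $H_B$-terms as $(I-UU^T)H_B - UU^T H_B(I-UU^T)$ after noting $(I-UU^T)H_B UU^T + (I-UU^T)H_B(I-UU^T) = (I-UU^T)H_B$. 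Taking norms and using $\|\lambda(I-UU^T)\| = \lambda$, $\|UU^T\|=\|I-UU^T\|=1$ gives
\begin{equation*}
\|\hat H_B - H_B\| \le \lambda + 2\,\|(I-UU^T)H_B\|.
\end{equation*}
So the task reduces to (i) controlling $\lambda$ and (ii) controlling $\|(I-UU^T)H_B\|$, the error of the randomized range finder.

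For step (ii) I would invoke the randomized-SVD machinery of Halko–Martinsson–Tropp (the "Proto-Algorithm" referenced in the text) with power iteration: $U$ is the $Q$-factor of $[H_B]^{2q+1}\Omega$ for a standard Gaussian $\Omega\in\mathbb R^{d\times l}$, and $H_B$ is symmetric PSD (by Assumption~\ref{ass:2}). The relevant bound (HMT Corollary 10.10, specialized to the symmetric/Hermitian case) states that for target rank $m$ with oversampling $l-m\ge 4$, with probability at least $1-6e^{m-l}$,
\begin{equation*}
\|(I-UU^T)H_B\| \le \left(34\sqrt{\tfrac{l}{l-m}} + \tfrac{16\sqrt{l}}{l-m+1}\sqrt{d-m}\right)^{1/(2q+1)} \sigma_{m+1}(H_B).
\end{equation*}
The hypothesis on $q$ is exactly the threshold that forces the bracketed factor, raised to the $1/(2q+1)$ power, to be at most $3/2$ (note $(2q+1)\cdot\tfrac12\log_{3/2}(\cdot)\ge \tfrac12\log_{3/2}(\cdot)$ matches the stated $q\ge\lceil\tfrac12\log_{3/2}(\cdot)\rceil$ up to the $2q+1$ exponent; I would double-check this arithmetic — it is the one place the constants must line up). Hence $\|(I-UU^T)H_B\| \le \tfrac32\sigma_{m+1,t}$.

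For step (i), the algorithm sets $\lambda \le \min\{\sigma_{m+1,t}, \tfrac12\sigma_{\min}(Z^TU)\}$ where $Z=\Psi_B(x_t,U)=H_B U$, so in particular $\lambda\le\sigma_{m+1,t}$. Combining, $\|\hat H_B - H_B\|\le \sigma_{m+1,t} + 2\cdot\tfrac32\sigma_{m+1,t} = 4\sigma_{m+1,t}$ — which is slightly worse than the claimed $3\sigma_{m+1,t}$. To recover the factor $3$ I would be more careful: the off-diagonal contribution $\|UU^TH_B(I-UU^T)\| = \|(I-UU^T)H_B UU^T\|\le\|(I-UU^T)H_B\|$, but the two "off-diagonal" occurrences together with the bottom-right block actually only contribute $\|(I-UU^T)H_B\|$ once on the $(I-UU^T)H_B$ side plus $\|UU^TH_B(I-UU^T)\|$ on the other, and since $\|UU^TH_B(I-UU^T)\|\le\|H_B(I-UU^T)\| = \|(I-UU^T)H_B\|$ by symmetry, one still seemingly gets the factor $2$. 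The cleaner route, and the one I expect the authors take, is to use the sharper HMT bound directly on $\|\hat H_B - H_B\|$ via the observation that $\hat H_B$ and $H_B$ agree on $\mathrm{range}(U)$ in the sense that only the complementary action matters, giving $\|\hat H_B - H_B\|\le \max\{\lambda, \|(I-UU^T)H_B(I-UU^T)\|\} + \|(I-UU^T)H_B UU^T\|$ and then bounding each by $\|(I-UU^T)H_B\|$ plus using $\lambda\le\sigma_{m+1,t}$ together with the tighter constant $\tfrac32$ replaced by $1$ in the threshold — i.e. the $q$-bound is actually calibrated so the range-finder error is $\le\sigma_{m+1,t}$, not $\tfrac32\sigma_{m+1,t}$. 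The main obstacle, then, is not the structural decomposition (which is routine) but pinning down the exact constant: verifying that the stated lower bound on $q$ drives the HMT amplification factor down to the precise value needed so that $\lambda + (\text{off-diagonal}) + (\text{bottom-right}) \le 3\sigma_{m+1,t}$, and confirming the failure probability $6e^{m-l}$ is inherited verbatim from the Gaussian concentration estimates in HMT under the condition $m\le l-4$.
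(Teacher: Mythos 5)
Your overall strategy matches the paper's: decompose the error through the projector $UU^T$, control the range-finder error with the Halko--Martinsson--Tropp machinery under power iteration, and let the stated lower bound on $q$ drive the amplification factor down to $\tfrac32$; the failure probability $6e^{m-l}$ is inherited exactly as you say (the paper's Corollary~\ref{coro:ori_rlra_bound} is essentially the bound you invoke). However, there is a genuine gap in the constant, and you have correctly located it but not closed it. Your decomposition pays an additive $\lambda$ for the perturbation term plus a factor $2$ for the $H_B$-cross terms, giving $\lambda + 2\cdot\tfrac32\,\sigma_{m+1,t}\le 4\sigma_{m+1,t}$. Neither of your proposed repairs works: the threshold on $q$ is calibrated (via the base-$\tfrac32$ logarithm) so that $\bigl(34\sqrt{l/(l-m)}+\cdots\bigr)^{1/(2q+1)}\le\tfrac32$, not $1$, so the range-finder error cannot be pushed below $\tfrac32\sigma_{m+1,t}$ for any finite $q$; and every regrouping of the blocks you sketch still leaves the additive $\lambda$ standing.

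The paper's fix is a $\lambda$-shift that your decomposition misses. Since $\lambda\,UU^T(I-UU^T)=0$, one can write
\begin{equation*}
\hat H_B - H_B \;=\; UU^T\bigl(H_B-\lambda I\bigr)\bigl(UU^T-I\bigr) \;+\; \bigl(UU^T-I\bigr)\bigl(H_B-\lambda I\bigr),
\end{equation*}
so that $\|\hat H_B - H_B\|\le 2\,\|(I-UU^T)(H_B-\lambda I)\|$ with no separate $\lambda$ term at all. The price is that the matrix whose range is being missed is now the shifted matrix $H_B-\lambda I$, while $U$ is built from $H_B^{2q+1}\Omega$; this is exactly why the paper proves the generalized deterministic bound of Lemma~\ref{the:2} (allowing the approximated matrix $\tilde A$ to differ from the sampled matrix $A$ so long as they share singular vectors) instead of quoting HMT verbatim, together with the power-iteration inequality of Lemma~\ref{lem:power_iteration_inequality} applied to $(H_B-\lambda I)^{2q+1}$. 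The hypothesis $\lambda\le\sigma_{m+1,t}$ then guarantees both that the spectral ratios $|(\sigma_i^t-\lambda)/\sigma_i^t|$ are below $1$ for $i\le m$ and that the tail term $\max\{|\sigma_{m+1}^t-\lambda|,\,|\sigma_d^t-\lambda|\}$ is at most $\sigma_{m+1,t}$, yielding $2\cdot\tfrac32\cdot\sigma_{m+1,t}=3\sigma_{m+1,t}$. Without this shift your argument proves the lemma only with constant $4$.
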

As far as we know, if we approximate the Hessian with an $m$-rank+sparse construction, the optimal Hessian approximation error will be $\sigma_{m+1,t} - \sigma_{d,t}$ which comes from NewSamp~\cite{erdogdu2015convergence}. Corresponding to such an approximation error, the construction complexity of NewSamp requires $\mathcal{O}(\dot{\kappa}_{b,1} d^2+m d^2)$ which can hardly be endured with a slightly larger instance dimension. While, from Lemma~\ref{thm:l2_norm_upper_bound}, \method improves the complexity by a factor at least $\mathcal{O}(d/(l \dot{\kappa}_{b,m}^2))$ ($l \ll d$) when the stochastic Hessian can be well approximated and keeps the approximation error nearly three times the optimal ($\sigma_{n,t}$ is a tiny constant). Such a guarantee cannot be promised for any quasi-Newton method. Although LiSSA~\cite{agarwal2017second} has a similar error bound for the approximate Hessian inverse, that approximation error is not comparable because it depends almost entirely on the concentration inequalities of the sub-sample processing but not the matrix approximation techniques. Additionally, we only bound the Hessian approximation error when $q = \mathcal{O}(\log d)$ and $\lambda \le \min\left\{\sigma_{m+1,t}, \frac{1}{2}\sigma_{\min}(Z^TU)\right\}$. In fact, such upper bounds on $q$ and $\lambda$ are possibly pessimistic and can likely be improved to a more average quantity. However, since the parameters $q = \mathcal{O}(1)$ and $\lambda = \frac{1}{2}\sigma_{m+1}(Z^TU)$ suffice for convergence in our experimental settings, we have not tried to optimize it further.

\begin{theorem}
    \label{thm:phantom_convergence_frame}
    Suppose $\lambda_{\min} \le 2\sigma_{m+1,t}$. Frame the hypotheses of Lemma.~\ref{thm:l2_norm_upper_bound}, if the parameters satisfy:
    \begin{equation*}
        \begin{split}
        \eta_t \le \frac{\sigma_{d,t}}{96\lambda_{\min,t} - 16\sigma_{d,t}}\  \mathrm{and}\  b = \Theta\left(K^2\sigma_{m+1,t}^2\sigma_{d,t}^{-4}\log(d)\right),
        \end{split}
    \end{equation*}
    with probability at least $1 - 6e^{m-l}$, we have
    \begin{equation*}
        \begin{split}
        \left\|x_{t+1} - x^* \right\|\le c_{1,t}\left\|x_t - x^*\right\| + c_{2,t}\left\|x_t - x^*\right\|^2.
        \end{split}
    \end{equation*}
    The coefficients $c_{1,t}$ and $c_{2,t}$ are
    \begin{equation*}
        \label{eq:phantom_convergence_constant}
        \begin{split}
            c_{1,t} =1-\frac{\sigma_{d,t}^2}{36\lambda_{\min,t}^2}\eta_t\quad \mathrm{and}\quad c_{2,t} =  \frac{M_{b}\eta_t}{\lambda_{\min,t}}.
        \end{split}
    \end{equation*}
\end{theorem}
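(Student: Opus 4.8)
# Proof Proposal for Theorem~\ref{thm:phantom_convergence_frame}

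The plan is to start from the update rule $x_{t+1} = x_t - \eta_t \hat{H}_B^{-1}(x_t)\nabla F(x_t)$ and subtract $x^*$, using the optimality condition $\nabla F(x^*) = 0$ to write
\begin{equation*}
x_{t+1} - x^* = x_t - x^* - \eta_t \hat{H}_B^{-1}(x_t)\bigl(\nabla F(x_t) - \nabla F(x^*)\bigr).
\end{equation*}
First I would apply the integral form of the mean value theorem, $\nabla F(x_t) - \nabla F(x^*) = \bigl(\int_0^1 H(x^* + s(x_t - x^*))\,ds\bigr)(x_t - x^*)$, and then insert and subtract $H_B(x_t)$ inside that integral. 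This splits the right-hand side into a ``linear'' term of the form $\bigl(I - \eta_t \hat{H}_B^{-1}(x_t) H_B(x_t)\bigr)(x_t - x^*)$ plus an ``error'' term $\eta_t \hat{H}_B^{-1}(x_t)\bigl(\int_0^1 [H_B(x_t) - H(x^* + s(x_t-x^*))]\,ds\bigr)(x_t - x^*)$. Taking norms and using the triangle inequality gives the desired split into a term linear in $\|x_t - x^*\|$ and a term quadratic in $\|x_t - x^*\|$.

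Next I would bound each piece. For the quadratic term, Assumption~\ref{ass:1} (Hessian Lipschitz with constant $M_b$) controls the integral: $\|\int_0^1 [H_B(x_t) - H(x^* + s(x_t-x^*))]\,ds\| \le M_b\|x_t - x^*\|$ — here one must be a little careful, since $H_B$ is a batch Hessian while the integral involves the full Hessian $H = \nabla^2 F$, so I would either absorb this into $M_b$ via Assumption~\ref{ass:3} or use that the subsampled and full Hessians agree in expectation together with the batch size condition $b = \Theta(K^2\sigma_{m+1,t}^2\sigma_{d,t}^{-4}\log d)$ and a matrix concentration bound to make the substitution valid with the stated probability $1 - 6e^{m-l}$ (combined with the event from Lemma~\ref{thm:l2_norm_upper_bound} via a union bound). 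Combined with $\|\hat{H}_B^{-1}(x_t)\| \le \lambda_{\min,t}^{-1}$ — which follows from the SVD decomposition of $\hat{H}_B$ displayed in Section~\ref{sec:robust_hessian_inversion}, since the smallest singular value of $\hat{H}_B$ is $\min\{\sigma_{\min}(\Lambda), \lambda\} \ge \lambda_{\min,t}$ given the choice $\lambda \le \lambda_{\min,t}$ and $\lambda_{\min,t} = \tfrac12\sigma_{\min}(Z^TU)$ — this yields $c_{2,t} = M_b\eta_t/\lambda_{\min,t}$.

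For the linear term I need $\|I - \eta_t \hat{H}_B^{-1}(x_t) H_B(x_t)\| \le c_{1,t} = 1 - \tfrac{\sigma_{d,t}^2}{36\lambda_{\min,t}^2}\eta_t$. The idea is to write $H_B = \hat{H}_B + (H_B - \hat{H}_B)$, so that $\hat{H}_B^{-1} H_B = I + \hat{H}_B^{-1}(H_B - \hat{H}_B)$, and hence $I - \eta_t \hat{H}_B^{-1} H_B = (1-\eta_t) I - \eta_t \hat{H}_B^{-1}(H_B - \hat{H}_B)$. Since these matrices need not commute, I would control the spectrum by sandwiching: lower-bound $\hat{H}_B^{-1} H_B$ below by something like $\sigma_{d,t}/\|\hat{H}_B\|$ and upper-bound it using $\|\hat{H}_B^{-1}\| \le \lambda_{\min,t}^{-1}$ together with the approximation error $\|\hat{H}_B - H_B\| \le 3\sigma_{m+1,t} = \epsilon_H$ from Lemma~\ref{thm:l2_norm_upper_bound} and the hypothesis $\lambda_{\min} \le 2\sigma_{m+1,t}$, which ties $\epsilon_H$ to $\lambda_{\min,t}$. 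The step-size bound $\eta_t \le \sigma_{d,t}/(96\lambda_{\min,t} - 16\sigma_{d,t})$ is exactly what is needed to ensure the resulting operator-norm bound stays below $1$ and collapses to the stated $c_{1,t}$. The main obstacle I anticipate is precisely this non-commuting spectral estimate — getting the numerical constants ($36$, $96$, $16$) to line up requires carefully choosing how to split and bound $\hat{H}_B^{-1} H_B$, probably by working with the symmetrized product $\hat{H}_B^{-1/2} H_B \hat{H}_B^{-1/2}$ and its Rayleigh quotients rather than the raw product, and then tracking how the eigenvalue bounds $[\sigma_{d,t}/\|\hat{H}_B\|,\ \|\hat{H}_B^{-1}\|\,\|H_B\|]$ translate back. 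The batch-size/concentration bookkeeping to justify replacing the full Hessian integral by $H_B(x_t)$ is the other delicate point, but it is routine given the $b = \Theta(\cdot)$ hypothesis.
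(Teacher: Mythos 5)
Your skeleton matches the paper's: both start from the update rule, apply the integral mean value theorem, and split the resulting operator norm into (i) $\left\|I-\eta_t\hat{H}_B^{-1}(x_t)H_B(x_t)\right\|$, (ii) a sub-sampling discrepancy $\left\|H_B(x_t)-\nabla^2F(x_t)\right\|$ handled by matrix Bernstein and the choice of $b$, and (iii) a Lipschitz term giving the quadratic coefficient via $\left\|\hat{H}_B^{-1}\right\|$ and $M_b$. Parts (ii) and (iii) of your plan are essentially the paper's. (One small slip: $\left\|\hat{H}_B^{-1}(x_t)\right\|=\lambda^{-1}\ge\lambda_{\min,t}^{-1}$, not $\le$; the paper takes $\lambda=\tfrac12\lambda_{\min,t}$ and the factor of $2$ cancels against the $\tfrac12$ from the Lipschitz integral to give $c_{2,t}=M_b\eta_t/\lambda_{\min,t}$.)

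The genuine gap is in the linear term, which is the heart of the theorem. Your decomposition $I-\eta_t\hat{H}_B^{-1}H_B=(1-\eta_t)I-\eta_t\hat{H}_B^{-1}(H_B-\hat{H}_B)$ cannot work by triangle inequality: in the stated regime $\lambda\le\lambda_{\min,t}\le2\sigma_{m+1,t}$ and $\left\|H_B-\hat{H}_B\right\|\le3\sigma_{m+1,t}$, the perturbation has norm $\left\|\hat{H}_B^{-1}\right\|\cdot\left\|H_B-\hat{H}_B\right\|\ge 3$, so the bound exceeds $1$. Your fallback --- sandwiching the eigenvalues of the symmetrized product $\hat{H}_B^{-1/2}H_B\hat{H}_B^{-1/2}$ --- also does not close the gap, because $I-\eta_t\hat{H}_B^{-1}H_B$ is not normal: converting an eigenvalue bound on the symmetrized product back to an operator-norm bound on the unsymmetrized one costs a factor $\sqrt{\kappa(\hat{H}_B)}$, which destroys the contraction. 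The paper's actual argument is different in kind: it expands $\left\|I-\eta_t\hat{H}_B^{-1}H_B\right\|^2=\max_{\|v\|=1}\left(1-2\eta_t\,v^T\hat{H}_B^{-1}H_Bv+\eta_t^2\,v^T\hat{H}_B^{-1}H_B^2\hat{H}_B^{-1}v\right)$, decomposes $v=Ua+U_\perp b$ using the block structure of $\hat{H}_B^{-1}$, keeps the large positive block $\lambda^{-1}U_\perp U_\perp^TH_BU_\perp U_\perp^T$ as a \emph{helpful} lower bound ($\ge\sigma_{d,t}/\lambda$) on the first-order term rather than as an adversarial perturbation, controls only the cross terms via Young's inequality with a tunable parameter $\rho$, and solves a quadratic for the optimal $\rho_+$. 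This yields $1-c\eta_t+C\eta_t^2$, and the step-size cap $\eta_t\le\sigma_{d,t}/(96\lambda_{\min,t}-16\sigma_{d,t})$ is exactly what makes the $\eta_t^2$ term subordinate; that is where the constants $36$, $96$, $16$ come from. Without this sign-aware, block-wise treatment of $\hat{H}_B^{-1}H_B$, your proof of the linear coefficient does not go through.
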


\begin{remark*}
Notice that, we require $\lambda_{\min,t} \le 2\sigma_{m+1,t}$ in Theorem~\ref{thm:phantom_convergence_frame}, which is only introduced to simplify the formulation of coefficients, i.e., $c_{1,t}$ and $c_{2,t}$. For any $\lambda_{\min,t} \le \gamma \sigma_{m+1,t}\ (\gamma\ge 2)$, we can obtain a similar convergence rate by setting $\lambda = \gamma^{-1}\lambda_{\min,t}$. Thus, the theorem is without loss of generality.
\end{remark*}

Theorem~\ref{thm:phantom_convergence_frame} shows that \method has a similar composite convergence rate like NewSamp~\cite{erdogdu2015convergence} and LiSSA~\cite{agarwal2017second} where $c_1^t$ and $c_2^t$ are coefficients of the linear and quadratic terms, respectively.
Comparing the convergence with NewSamp whose first-order coefficient $c_{1,t}$ is abbreviated as $1-\alpha$, ours can be similar to $1-\alpha/6$.
In particular, truncated SVD is introduced in NewSamp to find an optimal $m$-rank approximate Hessian so that $c_1^t$ in NewSamp can be regarded as the optimal coefficient in most stochastic Newton method settings.
For LiSSA, its convergence rate is constrained by the initial decision variable $x_0$ and the sample size for calculating Hessian-vector product strictly, which causes that we cannot compare its convergence rate with ours directly.
However, \method has a better convergence wall-clock time which is validated in our experimental results (see Section~\ref{sec:experiment}).
In addition, during the analysis of the convergence rate, we notice that promoting the Hessian approximation error $\epsilon_H$ not only reduces the first-order coefficient in the linear-quadratic convergence rate  but also expands the range of step size.
This coincides with our intuition that  a faster convergence usually requires a smaller Hessian approximation error.
In summary, \method is designed to achieve a better trade-off between the iteration complexity and the convergence rate. With randomizing the process of constructing the special approximate Hessian, \method both accelerates the iteration and limits Hessian approximation error.

\section{Experiments}
\label{sec:experiment}
To evaluate the performances and fully demonstrate the advantages of our proposed method, we conduct our experiments on several machine learning tasks with different objective functions, including binary image classification and text classification. As we get similar conclusions on these two tasks, we only present the experimental result on the binary image classification in this section. We will further show all experimental details, including the parameters of all optimizers and the results on text classification tasks in the supplementary materials due to space limitations.

For the image classification tasks, we refer to the experimental settings in \cite{agarwal2017second} and \cite{erdogdu2015convergence}. We utilize the following objective function:
\begin{equation*}
        \min\limits_{x} -\frac{1}{N}\mathop{\sum}\limits_{i=1}^N\log\frac{1}{1 + \exp\left(-y_i\theta_i^Tx\right)} + \frac{1}{2}a \left\|x\right\|^2,
\end{equation*}
where $\theta_i \in R^d$ and $y_i \in \{-1, 1\}$ are the instances and labels, respectively, and $a$ is the $L_2$ regularization parameter which influences the condition number of the objective.

We choose state-of-the-art first- and second-order optimization methods as baselines, including SVRG~\cite{johnson2013accelerating}, NewSamp~\cite{erdogdu2015convergence},  S-LBFGS~\cite{moritz2016linearly}, SB-BFGS~\cite{gower2016stochastic}, and LiSSA~\cite{agarwal2017second}.  We implement all the methods in C++ and Intel Math Kernel Library(MKL). All the code for our experiments can be found in our supplementary material.
\begin{figure*}
    \centering
    \includegraphics[width=1.0\textwidth]{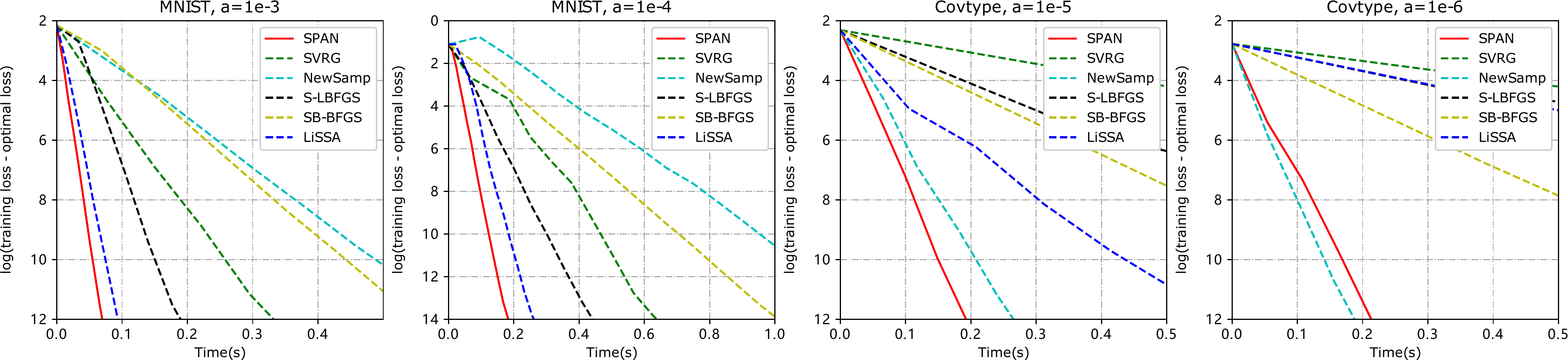}
    \caption{Training loss versus running time. The first two columns are for MNIST4-9 dataset. The last two are for CovType dataset. Note \method achieves the best or near the best performance with respect to wall-clock time.}
    \label{fig:wallclocktime}
\end{figure*}

\begin{figure*}
    \centering
    \includegraphics[width=1.0\textwidth]{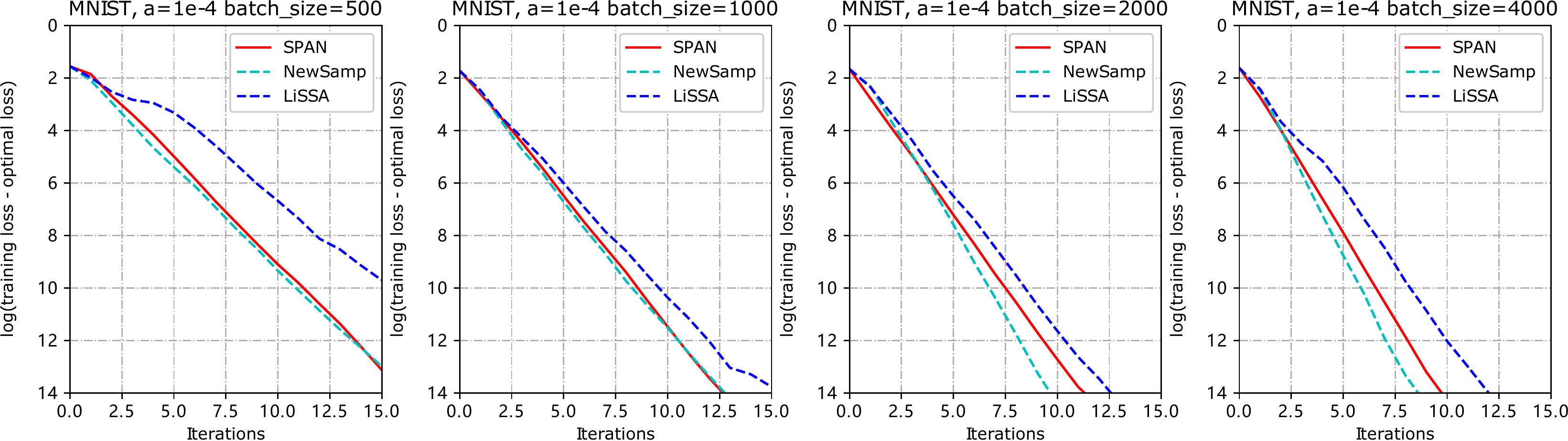}
    \caption{Empirical convergence rate comparison on MNIST4-9 dataset. Four columns represent different batch-size for sub-sampled Hessian.}
    \label{fig:iteration}
\end{figure*}

We adopt two datasets in our experiment, including the MNIST4-9 dataset~\cite{agarwal2017second} consisting of approximate $1.2\times 10^4$ instances, and CovType dataset~\cite{erdogdu2015convergence} consisting of approximate $5.0\times 10^5$ instances. For each dataset, we evaluate all the methods on two different condition numbers, respectively. Besides, for the fairness of comparison, in each set of experiments, we pick the optimal hyperparameters for every method and present the wall-clock time of all the methods in Figure~\ref{fig:wallclocktime}.

From Figure~\ref{fig:wallclocktime}, we can find that our method \method achieves the best results in almost all experimental settings. NewSamp gets a very close performance on the CovType dataset but consumes a lot of time on the MNIST dataset. LiSSA performs very close on the MNIST dataset but does not perform well on the CovType dataset. Meanwhile, there are significant gaps between \method and the other methods in all the experimental settings.

\begin{figure*}
    \centering
    \includegraphics[width=1.0\textwidth]{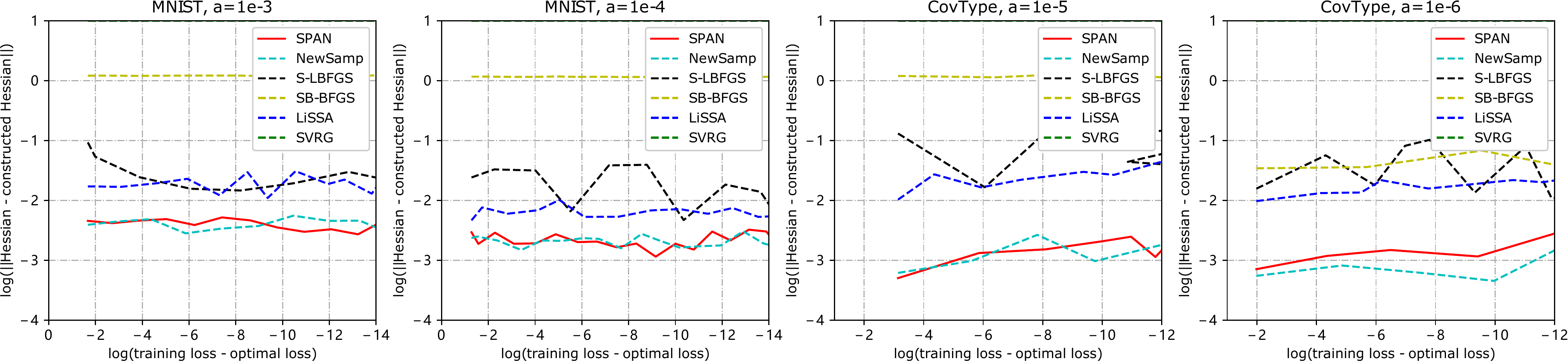}
    \caption{Hessian approximation error. Notice that \method achieves near lowest error.}
    \label{fig:hessian_approx}
\end{figure*}

To further explain the gaps between the second-order baselines and \method, we evaluate the empirical approximation error between the real full Hessian and the constructed approximate Hessian in Figure~\ref{fig:hessian_approx}.
We can see that the approximation error of our method is very close to that of the theoretical optimal solution (NewSamp) but much more accurate than the other methods.
The results of the experiment demonstrates that our constructed approximate Hessian in \method makes better use of second-order information than other stochastic second-order optimizers (S-LBFGS, SB-BFGS, and LiSSA).
Moreover, Figure~\ref{fig:iteration} illustrates the empirical convergence rate of these methods. As we can see, the empirical convergence rate of our proposed method is much better than LiSSA but slightly worse than NewSamp, which confirms our theoretical results and insights again.

Even though NewSamp guarantees an excellent convergence rate, the efficiency of the algorithm mainly depends on the number of parameters. NewSamp needs a long time to perform an iteration, even when there are only $784$ parameters in MNIST dataset. As a comparison, our proposed \method is much more competitive in terms of the robustness of the feature dimension and the per-iteration efficiency in various scenarios.

Considering the experiments mentioned above, \method has a tolerable Hessian approximation error which results in the runner up with respect to the empirical convergence rate. Besides, it enjoys the benefit of per-iteration efficiency, which makes it outperform others in practice. In summary, we conclude that \method achieves a better trade-off between per-iteration efficiency and convergence rate. Furthermore, it is robust for the sub-sampled batch size.

\section{Conclusion and future work}
\label{sec:conclu}
Newton and quasi-Newton methods converge at faster rates than gradient descent methods.
However, they are often expensive, computationally.
In this paper, we propose \method,  a novel method to optimize a smooth-strongly convex objective function.
\method utilizes the first-order oracle for Hessian approximation. Therefore, it is much faster than Newton method and its alike.
We give a theoretical analysis of its approximation accuracy and convergence rate.
Experiments on several real datasets demonstrate that our proposed method outperforms previous state-of-the-art methods.

For the future work, the constructed approximate Hessian of \method can be combined with advanced first-order methods, i.e., SVRG~\cite{johnson2013accelerating} and SAGA~\cite{defazio2014saga}, or help to introduce second-order information to complex objective functions, e.g., cubic regularization~\cite{nesterov2006cubic,kohler2017sub,tripuraneni2018stochastic}, neural networks with low complexity and competitive convergence rate.
\section*{Acknowledge}
Thanks the anonymous reviewers for their helpful comments and suggestions. We also thank Zhe Wang, Linyun Yu, Yitan Li, and Hao Zhou for their valuable discussions. 

\bibliographystyle{unsrt}  
\bibliography{references}  

\newpage
\appendix
\section{Some Important Lemmas}	In this section, we give several important definitions and lemmas which will be used in the proof of the convergence analysis (Section.~4 in this paper).

\begin{definition}(Orthogonal Projectors~\cite{halko2011finding})
    \label{def:orthogonal_projector_definition}
    An orthogonal projector is an Hermitian matrix $P$ that satisfies the polynomial $P^2 = P$. This identity implies $0\preceq P \preceq I$. An orthogonal projector is completely determined by its range(column space). For a given matrix $M$, we write $\mathcal{P}_M$ for the unique orthogonal projector with $\mathrm{range}\left(\mathcal{P}_M\right) = \mathrm{range}\left(M\right)$. When $M$ has full column rank, we can express this projector explicitly:
    \begin{equation*}
        \begin{split}
            \mathcal{P}_M = M\left(M^*M\right)^{-1}M^*,
        \end{split}
    \end{equation*}
    where $M^*$ denotes the conjugate transpose matrix of $M$. The orthogonal projector onto the complementary subspace, $\mathrm{range}\left(P\right)^{\perp}$, is $I - P$.
\end{definition}

\begin{lemma}(Matrix Concentration Inequalities~\cite{gross2010note})
    \label{lem:matrix_concentration_inequality_1}
    Let $\mathcal{X}$ be a finite set of Hermitian matrices in $\mathbb{R}^{d\times d}$ where $\forall X_i\in \mathcal{X}$, we have
    \begin{equation*}
        \mathbb{E}\left[X_i\right] = 0,\quad \left\|X_i\right\|\le \gamma,\quad \left\|\mathbb{E}\left[X_i^2\right]\right\|\le\sigma^2.
    \end{equation*}
    Given its size, let $S$ denote a uniformly random sample from $\left\{1,2,...,\left|\mathcal{X}\right|\right\}$ with or without replacement. Then we have
    \begin{equation*}
        \begin{split}
            \mathbb{P}\left(\left\|\frac{1}{|S|}\mathop{\sum}\limits_{i\in S}X_i\right\|\ge \epsilon\right)\le 2n\ \exp\left\{-\left|S\right|\min\left(\frac{\epsilon^2}{4\sigma^2},\frac{\epsilon}{2\gamma}\right)\right\}.
        \end{split}
    \end{equation*}
\end{lemma}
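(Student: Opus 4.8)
The plan is to establish this via the \emph{matrix Laplace transform method} (the matrix Chernoff bound), treating the two tails of the operator norm separately and controlling the matrix moment generating function through Golden--Thompson together with a per-summand variance estimate. Throughout write $s=|S|$ and $Z=\frac{1}{s}\sum_{i\in S}X_i$, and read the prefactor $2n$ as $2d$, the ambient dimension. First I would reduce the operator-norm tail to an eigenvalue tail: since each $X_i$ and hence $Z$ is Hermitian, $\|Z\|=\max\{\lambda_{\max}(Z),\lambda_{\max}(-Z)\}$, so $\mathbb{P}(\|Z\|\ge\epsilon)\le\mathbb{P}(\lambda_{\max}(Z)\ge\epsilon)+\mathbb{P}(\lambda_{\max}(-Z)\ge\epsilon)$, and bounding each term identically (the family $\{-X_i\}$ satisfies the same hypotheses) produces the factor $2$. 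I would also reduce the without-replacement case to the with-replacement (i.i.d.) case by Hoeffding's domination: for any convex function the expectation of a without-replacement sum is dominated by that of the with-replacement sum, and $A\mapsto\operatorname{tr}\exp(\theta A)$ is convex, so every moment-generating bound proved for i.i.d.\ sampling transfers verbatim.

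For the eigenvalue tail, Markov's inequality applied to $e^{\theta\lambda_{\max}(Z)}$ for $\theta>0$, together with $e^{\theta\lambda_{\max}(Z)}=\lambda_{\max}(e^{\theta Z})\le\operatorname{tr}(e^{\theta Z})$, gives
\[
\mathbb{P}(\lambda_{\max}(Z)\ge\epsilon)\le e^{-\theta\epsilon}\,\mathbb{E}\operatorname{tr}\exp\!\left(\tfrac{\theta}{s}\textstyle\sum_{k=1}^{s}Y_k\right),
\]
where $Y_1,\dots,Y_s$ are i.i.d.\ uniform draws from $\mathcal{X}$. Writing $u=\theta/s$ and applying Golden--Thompson $s-1$ times together with independence (the Ahlswede--Winter argument) bounds this matrix MGF by $d\,\|\mathbb{E}\,e^{uY}\|^{\,s}$. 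The key per-summand estimate comes from the scalar transfer rule: since every eigenvalue of $Y$ lies in $[-\gamma,\gamma]$ and $\phi(r)=e^{r}-1-r$ obeys $e^{ux}\le 1+ux+\gamma^{-2}\phi(u\gamma)\,x^2$ for $|x|\le\gamma$, the spectral mapping gives $e^{uY}\preceq I+uY+\gamma^{-2}\phi(u\gamma)\,Y^2$. Taking expectations with $\mathbb{E}[Y]=0$, using $\mathbb{E}[Y^2]\preceq\sigma^2 I$, and then $I+A\preceq e^{A}$ yields $\|\mathbb{E}\,e^{uY}\|\le\exp(\gamma^{-2}\phi(u\gamma)\sigma^2)$, so the exponent collapses to $s\big(\!-u\epsilon+\gamma^{-2}\phi(u\gamma)\sigma^2\big)$.

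It then remains to optimize over $u>0$ and package the result as the stated minimum. Setting $v=u\gamma$, the per-sample rate is $R(v)=v\epsilon/\gamma-\phi(v)\sigma^2/\gamma^2$, and I would lower bound $\sup_{v>0}R(v)$ by splitting on whether $\epsilon\gamma\le 2\sigma^2$. In the small-deviation regime $\epsilon\gamma\le 2\sigma^2$ I would use $\phi(v)\le v^2$ (valid for $v\le 1$) and take $v=\epsilon\gamma/(2\sigma^2)\le 1$, giving $R=\epsilon^2/(4\sigma^2)$, which is exactly the active branch of the minimum there; in the large-deviation regime $\epsilon\gamma>2\sigma^2$ I would fix $v=1$, so that $\phi(1)\sigma^2/\gamma^2=(e-2)\sigma^2/\gamma^2<(e-2)\epsilon/(2\gamma)$ and hence $R(1)\ge\epsilon/(2\gamma)$, the other branch. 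Multiplying the guaranteed rate by $s=|S|$ and reinstating the $2d$ prefactor completes the bound. The main obstacle is the matrix MGF factorization: unlike the scalar case one cannot simply add logarithms of the summands, so the Golden--Thompson step (which is what forces the dimensional factor $d$) and the operator-inequality chain $e^{uY}\preceq\cdots\preceq\exp(\cdots)$ must be handled with care; by comparison, the two-regime relaxation of $\phi$ that produces the clean constants $4$ and $2$ is routine once these spectral steps are in place.
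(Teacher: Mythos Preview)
Your proposal is correct and follows the standard matrix Laplace transform route (Ahlswede--Winter via Golden--Thompson, together with the Bernstein-type scalar bound $e^{ux}\le 1+ux+\gamma^{-2}\phi(u\gamma)x^2$ on $[-\gamma,\gamma]$, and Hoeffding's convexity domination for sampling without replacement). This is essentially the argument in the cited reference \cite{gross2010note}; the paper itself does not supply a proof but simply invokes that result, so there is no separate ``paper's proof'' to compare against. Your treatment of the two deviation regimes to recover the constants $4$ and $2$ is clean and checks out numerically (in particular $R(1)\ge(4-e)\epsilon/(2\gamma)>\epsilon/(2\gamma)$ in the large-deviation regime).
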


\begin{lemma}(Conjugation Rule)
    \label{lem:conjugation_rule}
    Suppose that $M\succeq 0$, for every real matrix $A$, the matrix $A^TMA\succeq 0$. In particular,
    \begin{equation*}
        M\preceq N \quad \Rightarrow \quad A^TMA \preceq A^TNA.
    \end{equation*}
\end{lemma}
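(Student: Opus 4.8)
The statement to prove is the Conjugation Rule (Lemma~\ref{lem:conjugation_rule}): if $M \succeq 0$, then $A^T M A \succeq 0$ for every real matrix $A$, and consequently $M \preceq N$ implies $A^T M A \preceq A^T N A$. The plan is to work directly from the definition of positive semidefiniteness, namely that a symmetric matrix $S \succeq 0$ if and only if $y^T S y \ge 0$ for every vector $y$. First I would establish the base claim $A^T M A \succeq 0$: fix an arbitrary vector $y$ of the appropriate dimension and compute the quadratic form $y^T (A^T M A) y$. The key algebraic step is the regrouping $y^T A^T M A y = (Ay)^T M (Ay)$, which recognizes the expression as the quadratic form of $M$ evaluated at the single vector $z := Ay$. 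Since $M \succeq 0$ by hypothesis, $z^T M z \ge 0$ for this $z$ (and indeed for every vector), so $y^T(A^TMA)y \ge 0$. As $y$ was arbitrary, $A^T M A \succeq 0$.

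To complete the lemma I would derive the order-preserving consequence from the base claim. Suppose $M \preceq N$, which by definition means $N - M \succeq 0$. Applying the base claim to the matrix $N - M$ in place of $M$ yields $A^T (N - M) A \succeq 0$. By linearity of matrix multiplication this expands as $A^T N A - A^T M A \succeq 0$, which is precisely the statement $A^T M A \preceq A^T N A$. This is the entire argument.

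There is no genuine obstacle here; the result is elementary and follows in two short lines from the definition of $\succeq$ via the substitution $z = Ay$. The only points requiring care are bookkeeping: one should note that $A$ need not be square, so if $A \in \mathbb{R}^{d \times k}$ then $M \in \mathbb{R}^{d \times d}$, $y \in \mathbb{R}^{k}$, and $A^T M A \in \mathbb{R}^{k \times k}$, with the dimensions of $z = Ay \in \mathbb{R}^{d}$ matching $M$; and one should observe that $A^T M A$ is automatically symmetric (since $(A^T M A)^T = A^T M^T A = A^T M A$ using $M = M^T$), so that the quadratic-form characterization of PSD genuinely applies. Because everything reduces to evaluating $M$'s quadratic form at the particular vectors of the form $Ay$, no further estimates, spectral decompositions, or concentration arguments are needed.
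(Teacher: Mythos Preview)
Your proposal is correct and follows essentially the same route as the paper: both arguments reduce to observing that $y^T(A^T M A)y = (Ay)^T M (Ay) \ge 0$ and then apply this with $N - M$ in place of $M$ to obtain the order-preserving implication. Your write-up is in fact a bit more careful (noting symmetry of $A^T M A$ and checking dimensions), but the underlying idea is identical.
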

\begin{proof}
    The positive semidefinite matrices form a convex cone, which induces a partial ordering on the linear space of Hermitian matrices: $M\preceq N$ if and only if $N-M$ is positive semidefinite, which means
    \begin{equation*}
        M\preceq N\quad \Rightarrow\quad 0\preceq N-M.
    \end{equation*}
    For any real vector $u$, $v = Au$ satisfies
    \begin{equation*}
        \begin{split}
            & 0\le v^T\left(N-M\right)v = u^TA^T\left(N-M\right)Au \\
            \Rightarrow & 0\le u^T\left(A^TNA - A^TMA\right)u \\
            \Rightarrow & A^TNA \preceq A^TMA.
        \end{split}
    \end{equation*}
\end{proof}

\begin{lemma}
    \label{lem:power_iteration_inequality}
    Let $P$ be an orthogonal projector, and let $M$ be a real square matrix. For each positive number q, we have
    \begin{equation*}
        \begin{split}
            \left\|PM\right\|\le\left\|P\left(MM^T\right)^qM\right\|^{1/(2q+1)}.
        \end{split}
    \end{equation*}
\end{lemma}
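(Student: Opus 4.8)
The plan is to strip the matrices away and reduce the claim to a one‑variable power‑mean inequality. \textbf{Step 1 (symmetrize).} Using the identity $\|A\|^2=\|AA^T\|$, valid for every real matrix, together with $P=P^T=P^2$, I would first rewrite $\|PM\|^2=\|(PM)(PM)^T\|=\|PMM^TP\|$. Setting $A\triangleq MM^T\succeq 0$, this reads $\|PM\|^2=\|PAP\|$, and likewise $\|P(MM^T)^qM\|^2=\|PA^qMM^TA^qP\|=\|PA^{2q+1}P\|$, where $A^qAA^q=A^{2q+1}$ comes from the spectral (functional) calculus for the positive semidefinite matrix $A$ — this is what allows the argument to run for an arbitrary positive real $q$, not only integers. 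After taking $(2q+1)$-th roots, the lemma becomes equivalent to the single matrix inequality $\|PAP\|^{2q+1}\le\|PA^{2q+1}P\|$.

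\textbf{Step 2 (variational reduction).} Since $PAP$ is symmetric positive semidefinite, $\|PAP\|=\max_{\|x\|=1}\langle PAPx,x\rangle=\max_{\|x\|=1}\langle A(Px),(Px)\rangle$. Because $P$ is an orthogonal projector, $\{Px:\|x\|\le1\}$ is exactly the closed unit ball of $\mathrm{range}(P)$, and since $A\succeq 0$ the maximum of the quadratic form $\langle Ay,y\rangle$ over that ball is attained on the unit sphere (if it is $0$ the lemma is trivial). So there is a unit vector $y\in\mathrm{range}(P)$ with $\|PAP\|=\langle Ay,y\rangle$, and for this $y$ we have $Py=y$, hence $\|PA^{2q+1}P\|\ge\langle PA^{2q+1}Py,y\rangle=\langle A^{2q+1}y,y\rangle$. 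It therefore remains only to prove the scalar inequality $\langle Ay,y\rangle^{2q+1}\le\langle A^{2q+1}y,y\rangle$ for this unit vector.

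\textbf{Step 3 (Jensen).} Diagonalizing $A=\sum_i\lambda_iu_iu_i^T$ with $\lambda_i\ge0$ and $\{u_i\}$ orthonormal, and writing $c_i\triangleq\langle y,u_i\rangle^2\ge0$ with $\sum_ic_i=\|y\|^2=1$, the left side equals $\bigl(\sum_ic_i\lambda_i\bigr)^{2q+1}$ and the right side equals $\sum_ic_i\lambda_i^{2q+1}$; since $t\mapsto t^{2q+1}$ is convex on $[0,\infty)$ for $q>0$, Jensen's inequality finishes it. Chaining $\|PAP\|^{2q+1}=\langle Ay,y\rangle^{2q+1}\le\langle A^{2q+1}y,y\rangle\le\|PA^{2q+1}P\|$ and undoing Step 1 gives the lemma.

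\textbf{Main obstacle.} Almost everything here is bookkeeping with operator-norm identities; the one point that needs care is Step 2 — checking that the norm of the compressed operator $PAP$ is realized by a vector \emph{inside} $\mathrm{range}(P)$, so that $Py=y$ and the same $y$ can be reused for $A^{2q+1}$, and that the maximum of $\langle Ay,y\rangle$ over the unit ball of $\mathrm{range}(P)$ sits on the sphere. I would also double-check the functional-calculus identity $A^qAA^q=A^{2q+1}$ for non-integer $q$, since the statement is phrased for every positive number $q$ rather than only for integer power counts.
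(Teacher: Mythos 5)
Your proof is correct. The skeleton matches the paper's: both first symmetrize via $\|PM\|^2=\|PMM^TP\|$ and $\|P(MM^T)^qM\|^2=\|P(MM^T)^{2q+1}P\|$, reducing the lemma to the single compression inequality $\|PAP\|^{2q+1}\le\|PA^{2q+1}P\|$ for $A=MM^T\succeq 0$. Where you diverge is in how that inequality is established. The paper conjugates by the unitary factor of the SVD of $M$ so that the middle matrix becomes a nonnegative diagonal $X$, and then imports the statement $\|\hat{P}X\hat{P}\|^{t}\le\|\hat{P}X^{t}\hat{P}\|$ ($t\ge 1$) as a black box from Theorem IX.2.10 of Bhatia's \emph{Matrix Analysis}. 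You instead prove exactly that inequality from scratch: the variational characterization of $\|PAP\|$ produces a unit vector $y$ with $Py=y$ attaining the norm (your convexity argument for why the maximizer sits on the unit sphere of $\mathrm{range}(P)$ is sound, and the degenerate case $PAP=0$ is correctly dispatched), and then Jensen's inequality for $t\mapsto t^{2q+1}$ against the spectral measure of $A$ at $y$ gives $\langle Ay,y\rangle^{2q+1}\le\langle A^{2q+1}y,y\rangle\le\|PA^{2q+1}P\|$. Your version buys self-containedness and avoids the SVD conjugation step entirely (you work with $A=MM^T$ directly rather than reducing to a diagonal matrix); the paper's version is shorter on the page but rests on an external operator-inequality reference. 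Your concern about $A^qAA^q=A^{2q+1}$ for non-integer $q$ is harmless: for $A\succeq 0$ all real powers are defined by the spectral calculus and commute, so the identity holds, and $2q+1\ge 1$ is all that Jensen needs.
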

\begin{proof}
    Define the SVD of $M$ as $M = U \Sigma V^T$ where $U$ and $V$ are unitary matrices. Thus, we have $\Sigma = \mathrm{diag}\left\{\sigma_1, \sigma_2, \ldots, \sigma_{\mathrm{rank}(M)}, 0, 0, ..., 0\right\}$ where $\sigma_i$ denotes the $i$-th largest singular value of matrix $M$.
    Then, we have
    \begin{equation*}
        \begin{split}
            & \left\|PM\right\|^{2(2q+1)} = \left\|P M M^T P\right\|^{2q+1}  \mathop{=}^{\tiny\mathcircled{1}} \left\|U^T PU\Sigma^2U^T PU\right\|^{2q+1} \\
            \mathop{\le}^{\tiny\mathcircled{2}} & \left\|\left(U^T PU\right)\Sigma^{2(2q+1)} \left(U^T PU\right)\right\| \mathop{=}^{\tiny\mathcircled{3}} \left\|PU\Sigma^{2(2q+1)}U^T P\right\|\\
             = & \left\|P\left(MM^T\right)^{2q+1}P\right\| = \left\|P\left(MM^T\right)^q M\cdot M^T \left(MM^T\right)^q P\right\| = \left\|P\left(MM^T\right)^q M\right\|^2.
        \end{split}
    \end{equation*}
    where $\tiny\mathcircled{1}$ and $\tiny\mathcircled{3}$ use the unitary invariance of the spectral norm. The inequality $\tiny\mathcircled{2}$ is because
    \begin{equation*}
        \left\|\hat{P}X\hat{P}\right\|^t \le \left\|\hat{P}X^{t}\hat{P}\right\|,
    \end{equation*}
    where $\hat{P}$ is an orthogonal projector, $X$ is a nonnegative diagonal matrix and $t\ge 1$. This relation follows immediately from Theorem. \uppercase\expandafter{\romannumeral9}.2.10 in~\cite{bhatia2013matrix}.
\end{proof}

\begin{lemma}(Proposition 10.2 in~\cite{halko2011finding})
    \label{lem:4}
    Draw a $m\times l$ standard Gaussian matrix $G$ with $m\ge2$ and $l\ge m+2$, then
    \begin{equation*}
        \begin{split}
        & \left(\mathbb{E}\left\|G^{\dagger}\right\|_F^2\right)^{1/2} = \sqrt{\frac{m}{l-m-1}}\ and \quad \mathbb{E}\left\|G^{\dagger}\right\|\le\frac{e\sqrt{l}}{l-m}.
        \end{split}
    \end{equation*}
\end{lemma}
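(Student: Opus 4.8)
The plan is to translate both quantities into statements about the singular values of $G$. Since $l\ge m+2>m$, the matrix $G$ has full row rank $m$ almost surely, so $G^{\dagger}=G^T(GG^T)^{-1}$ and the nonzero singular values of $G^{\dagger}$ are exactly the reciprocals of those of $G$. Hence $\|G^{\dagger}\|_F^2=\sum_{i=1}^m\sigma_i(G)^{-2}=\mathrm{tr}\big((GG^T)^{-1}\big)$ and $\|G^{\dagger}\|=\sigma_{\min}(G)^{-1}$. The Frobenius statement is therefore a statement about the mean of the trace of an inverse Wishart matrix, while the spectral statement is a statement about the mean of the reciprocal of the smallest singular value. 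I would treat these two separately, since they require genuinely different tools; in particular the second bound $e\sqrt{l}/(l-m)$ is not implied by the first.

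For the Frobenius norm I would compute $\mathbb{E}\,\mathrm{tr}\big((GG^T)^{-1}\big)$ exactly by a conditioning argument. By exchangeability of the rows of $G$, every diagonal entry of $(GG^T)^{-1}$ has the same mean, so $\mathbb{E}\,\mathrm{tr}\big((GG^T)^{-1}\big)=m\,\mathbb{E}\big[(GG^T)^{-1}_{11}\big]$. Writing $g_1^T$ for the first row and $G_2$ for the remaining $(m-1)\times l$ block and applying the Schur-complement formula for the top-left entry of the inverse gives $(GG^T)^{-1}_{11}=\big(g_1^T(I-P_2)g_1\big)^{-1}$, where $P_2=G_2^T(G_2G_2^T)^{-1}G_2$ is the rank-$(m-1)$ orthogonal projector onto the row space of $G_2$. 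Conditioned on $G_2$, the vector $g_1\sim N(0,I_l)$ is independent, so $g_1^T(I-P_2)g_1$ is a chi-square variable with $l-m+1$ degrees of freedom; using $\mathbb{E}[1/\chi^2_k]=1/(k-2)$ (valid here because $l\ge m+2$ forces $k=l-m+1>2$) yields $\mathbb{E}\big[(GG^T)^{-1}_{11}\big]=1/(l-m-1)$ and hence $\mathbb{E}\|G^{\dagger}\|_F^2=m/(l-m-1)$, which is the claimed equality after taking the square root. Equivalently, this is just the inverse-Wishart mean identity $\mathbb{E}[(GG^T)^{-1}]=I_m/(l-m-1)$.

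For the spectral norm I would bound $\mathbb{E}[\sigma_{\min}(G)^{-1}]$ through the tail integral $\mathbb{E}\|G^{\dagger}\|=\int_0^\infty\mathbb{P}(\sigma_{\min}(G)<1/t)\,dt$, which reduces the problem to a sharp lower-tail estimate for the smallest singular value of a fat Gaussian matrix. The key ingredient is the known anti-concentration bound $\mathbb{P}(\sigma_{\min}(G)\le\varepsilon)\lesssim(\varepsilon\sqrt{l})^{\,l-m+1}$ for rectangular Gaussians; substituting it and integrating, the $(l-m+1)$-th power makes the integral converge and produces a bound of the exact order $\sqrt{l}/(l-m)$, with the factor $e$ arising from the Gamma-function constants in the tail estimate. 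I expect this spectral part to be the main obstacle: unlike the Frobenius trace, the reciprocal smallest singular value has no closed-form mean, so the constant $e\sqrt{l}/(l-m)$ is delicate and hinges on having the tail bound with the correct exponent and prefactor (equivalently, on controlling the density of the smallest eigenvalue of the Laguerre ensemble). A cleaner self-contained route would bidiagonalize $G$ by distribution-preserving orthogonal transformations into a matrix with independent chi-distributed entries and bound the inverse of the resulting bidiagonal factor directly; either way, estimating the smallest singular value sharply is where the real work lies.
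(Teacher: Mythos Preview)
The paper does not prove this lemma at all: it is quoted verbatim as Proposition~10.2 of Halko--Martinsson--Tropp and used as a black box, so there is no ``paper's own proof'' to compare against. Your proposal is correct and in fact tracks the argument in the original reference closely: the Frobenius identity there is also obtained from the inverse-Wishart mean $\mathbb{E}[(GG^T)^{-1}]=I_m/(l-m-1)$, and the spectral bound is likewise obtained by integrating a small-ball estimate for $\sigma_{\min}(G)$ (they invoke a density bound of Chen--Dongarra rather than deriving the tail from scratch). Your assessment that the spectral part is where the genuine work lies, and that the constant $e\sqrt{l}/(l-m)$ comes out of Gamma-function constants in that tail/density bound, is accurate.
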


\begin{lemma}(Proposition 10.4 in~\cite{halko2011finding})
    \label{lem:3}
    Let $G$ be a $m\times l$ Gaussian matrix where $l\ge m+4$. For all $\theta\ge 1$,
    \begin{equation*}
        \begin{split}
            \mathbb{P}\left\{\left\|G^{\dagger}\right\|_F\ge\sqrt{\frac{12m}{l-m}}\cdot \theta\right\}\le4\theta^{m-l}\quad and\quad \mathbb{P}\left\{\left\|G^{\dagger}\right\|\ge\frac{e\sqrt{l}}{l-m+1}\cdot \theta\right\}\le \theta^{m-l-1}.
        \end{split}
    \end{equation*}
\end{lemma}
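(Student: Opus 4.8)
The plan is to reduce both estimates to the behavior of the singular values of $G$ and then exploit the explicit distribution of the smallest singular value of a Gaussian matrix. Since $l \ge m+4$, the $m\times l$ matrix $G$ has full row rank $m$ almost surely; writing $\sigma_1\ge\cdots\ge\sigma_m>0$ for its nonzero singular values, one has $\|G^{\dagger}\| = \sigma_m^{-1}$ and $\|G^{\dagger}\|_F^2 = \sum_{j=1}^m \sigma_j^{-2} = \mathrm{tr}\big[(GG^T)^{-1}\big]$. Thus the second claim is precisely a lower-tail estimate for $\sigma_{\min}(G)=\sigma_m$, while the first is a tail estimate for the trace of the inverse Wishart matrix $W^{-1}$, with $W=GG^T$. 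I would handle the spectral bound first, since it is the cleaner of the two and its analysis feeds directly into the Frobenius bound.

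For the spectral norm, I would start from the joint eigenvalue density of the real Wishart matrix $W$, namely the factor $\prod_i \lambda_i^{(l-m-1)/2}$ multiplying the Vandermonde and Gaussian weights. The key structural fact is that near the origin the density of $\lambda_{\min}(W)=\sigma_m^2$ vanishes like $\lambda^{(l-m-1)/2}$, so that $\mathbb{P}\{\sigma_{\min}(G)\le s\}$ scales like $s^{\,l-m+1}$ for small $s$. Integrating this small-ball estimate yields a bound of the form $\mathbb{P}\{\|G^{\dagger}\|\ge t\}\lesssim (c\sqrt{l}/t)^{\,l-m+1}$, and the normalization $t=(e\sqrt{l}/(l-m+1))\,\theta$ produces exactly the stated exponent $m-l-1$. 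The constant $e\sqrt{l}/(l-m+1)$ is not accidental: it is essentially the quantity governing $\mathbb{E}\|G^{\dagger}\|\le e\sqrt{l}/(l-m)$ in Lemma~\ref{lem:4}, since both are obtained by integrating the same tail, so I would reuse that computation to calibrate the constants rather than re-derive the density from scratch.

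The Frobenius bound is the main obstacle, for two reasons. First, a naive application of Markov's inequality to the second-moment identity $\mathbb{E}\|G^{\dagger}\|_F^2 = m/(l-m-1)$ from Lemma~\ref{lem:4} gives only a $\theta^{-2}$ decay, far weaker than the required $\theta^{-(l-m)}$; the strong polynomial rate can only come from the full left-edge structure of the spectrum of $W$, not from a single moment. Second, the map $X\mapsto\|X^{\dagger}\|_F$ is not globally Lipschitz --- its local Lipschitz constant scales with $\|X^{\dagger}\|^2$ --- so Gaussian concentration of measure cannot be invoked directly, and the crude pointwise bound $\|G^{\dagger}\|_F\le\sqrt{m}\,\|G^{\dagger}\|$ both loses the exponent (giving $l-m+1$ rather than $l-m$) and inflates the constant. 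I would therefore estimate $\mathrm{tr}(W^{-1})$ by controlling the entire cluster of small eigenvalues of $W$ at once: peel off $\sigma_m^{-2}$, handled by the spectral step above, and bound $\sum_{j<m}\sigma_j^{-2}$ through the eigenvalue repulsion encoded in the Vandermonde factor of the joint density. Matching the precise constant $\sqrt{12m/(l-m)}$ and the prefactor $4$ is the most delicate part of the argument; here I would again lean on the exact expectation $m/(l-m-1)$ of Lemma~\ref{lem:4} to fix the threshold and on the small-eigenvalue exponent $(l-m-1)/2$ to secure the rate $l-m$.
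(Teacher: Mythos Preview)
The paper does not prove this lemma at all: it is stated as Proposition~10.4 of Halko--Martinsson--Tropp and is imported without argument, serving only as a black-box tail bound feeding into the proof of Lemma~4.1. There is therefore no ``paper's own proof'' to compare your proposal against.

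Your outline is a reasonable reconstruction of the original argument in the cited reference. The spectral-norm part is essentially correct: the small-ball estimate for $\sigma_{\min}(G)$ coming from the $\lambda^{(l-m-1)/2}$ factor in the Wishart density is exactly what Halko et~al.\ use, and the normalization by $e\sqrt{l}/(l-m+1)$ is calibrated to the expectation bound as you suggest. For the Frobenius-norm part, you correctly identify the real difficulty --- that neither Markov on the second moment nor the crude bound $\|G^{\dagger}\|_F\le\sqrt{m}\,\|G^{\dagger}\|$ yields the right exponent --- but your description of the fix (``peel off $\sigma_m^{-2}$ and bound the remainder through Vandermonde repulsion'') is vague. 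The actual argument in Halko et~al.\ goes through higher moments of $\|G^{\dagger}\|_F$: one computes $\mathbb{E}\|G^{\dagger}\|_F^{2q}$ for $q$ up to roughly $(l-m)/2$ using the inverse-Wishart moment formulas, and then applies Markov at the largest admissible $q$ to obtain the $\theta^{-(l-m)}$ rate with the constant $\sqrt{12m/(l-m)}$. Your proposal would benefit from making that moment calculation explicit rather than appealing informally to eigenvalue repulsion.
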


\begin{lemma}(Theorem. 4.5.7 in~\cite{bogachev1998gaussian}).
    \label{lem:concentration_for_gaussian_matrix_functions}
    Suppose that $h$ is a Lipschitz function on matrices:
    \begin{equation*}
        \begin{split}
            \left|h(X) - h(Y)\right|\le L\left\|X- Y\right\|_F\quad for\ all\ X,Y.
        \end{split}
    \end{equation*}
    Draw a standard Gaussian matrix $G$, then
    \begin{equation*}
        \begin{split}
            \mathbb{P}\left\{h(G)\ge \mathbb{E}\left[h(G)\right]+Lu\right\}\le e^{-u^2/2}.
        \end{split}
    \end{equation*}
\end{lemma}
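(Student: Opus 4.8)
The plan is to reduce the statement to the classical Gaussian concentration inequality on Euclidean space and then establish the latter by the entropy method (Herbst's argument combined with the Gaussian logarithmic Sobolev inequality), which delivers exactly the sub-Gaussian constant $1/2$ in the exponent appearing in the bound.

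\textbf{Reduction to Euclidean space.} First I would identify the space of real $m\times l$ matrices equipped with the Frobenius norm with $\mathbb{R}^{n}$, $n=ml$, equipped with the Euclidean norm, via vectorization $\mathrm{vec}(\cdot)$. Under this isometry the hypothesis becomes ``$f:=h\circ\mathrm{vec}^{-1}$ is $L$-Lipschitz on $\mathbb{R}^n$'' and $\mathrm{vec}(G)\sim\mathcal{N}(0,I_n)$, since $G$ has i.i.d.\ standard normal entries. Hence it suffices to prove: for a $1$-Lipschitz $f:\mathbb{R}^n\to\mathbb{R}$ and $Z\sim\mathcal{N}(0,I_n)$, $\mathbb{P}\{f(Z)\ge\mathbb{E}[f(Z)]+u\}\le e^{-u^2/2}$; the general Lipschitz constant $L$ is recovered by applying this to $f/L$ and rescaling the deviation level. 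A one-line mollification ($f\ast\varphi_\varepsilon$ is smooth, still $1$-Lipschitz, and converges to $f$ locally uniformly, so its mean and tail probabilities converge) lets me further assume $f\in C^\infty$ with $\|\nabla f\|_2\le 1$ everywhere.

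\textbf{Moment generating function bound.} The core step is to show $\psi(\lambda):=\log\mathbb{E}\big[e^{\lambda(f(Z)-\mathbb{E}f(Z))}\big]\le\tfrac12\lambda^2$ for all $\lambda>0$. I would feed $g=e^{\lambda f/2}$ into the Gaussian log-Sobolev inequality $\mathrm{Ent}_\gamma(g^2)\le 2\,\mathbb{E}_\gamma\big[\|\nabla g\|_2^2\big]$, where $\gamma=\mathcal{N}(0,I_n)$; expanding the entropy in terms of $H(\lambda):=\mathbb{E}[e^{\lambda f}]$ and $H'(\lambda)=\mathbb{E}[f e^{\lambda f}]$ and using $\|\nabla f\|_2\le 1$ yields the differential inequality $\lambda\psi'(\lambda)-\psi(\lambda)\le\tfrac12\lambda^2$. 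Since this is precisely $\frac{d}{d\lambda}\!\left(\frac{\psi(\lambda)}{\lambda}\right)\le\tfrac12$ and $\psi(\lambda)/\lambda\to 0$ as $\lambda\downarrow 0$ (because $\psi(0)=\psi'(0)=0$), integrating gives $\psi(\lambda)\le\tfrac12\lambda^2$. The Chernoff bound then finishes the job: $\mathbb{P}\{f(Z)-\mathbb{E}f(Z)\ge u\}\le\inf_{\lambda>0}e^{-\lambda u+\lambda^2/2}=e^{-u^2/2}$, and undoing the rescaling (replacing $u$ by $Lu$) recovers the stated form.

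\textbf{Main obstacle and alternatives.} The only non-routine ingredient is the Gaussian log-Sobolev inequality with the sharp constant $2$ --- everything else is bookkeeping, and this is exactly why the lemma is attributed to \cite{bogachev1998gaussian}. I would either cite it directly, or prove it by tensorization (the two-point log-Sobolev inequality on $\{\pm1\}$, tensorized to $\{\pm1\}^N$, then passed to the Gaussian limit by the central limit theorem while tracking gradients); equivalently, one may invoke the Gaussian isoperimetric inequality of Borell and Sudakov--Tsirelson, from which the same tail follows even more directly. If the sharp constant is not needed (only the exponential decay in $l-m$ is used in Lemma~\ref{thm:l2_norm_upper_bound}), a fully self-contained substitute is the Gaussian interpolation argument: writing $f(X)-f(Y)=\int_0^{\pi/2}\langle\nabla f(X_\theta),X'_\theta\rangle\,d\theta$ with $X_\theta=X\sin\theta+Y\cos\theta$ and $X'_\theta=X\cos\theta-Y\sin\theta$ for independent copies $X,Y$, Jensen in $\theta$ together with integrating out the (independent, standard Gaussian) $X'_\theta$ gives $\mathbb{E}[e^{\lambda(f(X)-f(Y))}]\le e^{\pi^2\lambda^2/8}$ and hence the slightly weaker tail $e^{-2u^2/\pi^2}$.
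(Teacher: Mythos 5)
Your proof is correct, but note that the paper does not prove this lemma at all: it is imported verbatim as Theorem~4.5.7 of the cited monograph of Bogachev, so there is no in-paper argument to compare against. Your derivation is a sound, essentially self-contained substitute. The reduction step is right (vectorization is an isometry from the Frobenius norm to the Euclidean norm, a standard Gaussian matrix vectorizes to a standard Gaussian vector, and rescaling by $L$ plus mollification reduces to smooth $1$-Lipschitz $f$), and the Herbst computation checks out: with $g=e^{\lambda f/2}$ one gets $\|\nabla g\|_2^2\le\tfrac{\lambda^2}{4}e^{\lambda f}$, the log-Sobolev inequality gives $\lambda\psi'(\lambda)-\psi(\lambda)\le\tfrac12\lambda^2$, i.e.\ $\tfrac{d}{d\lambda}\bigl(\psi(\lambda)/\lambda\bigr)\le\tfrac12$, and since $\psi(\lambda)/\lambda\to\psi'(0)=0$ integration yields $\psi(\lambda)\le\tfrac12\lambda^2$ and the Chernoff bound closes the argument with the sharp constant $1/2$. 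The only ingredient you take as known is the Gaussian log-Sobolev inequality with constant $2$ (or, equivalently, Gaussian isoperimetry); that is legitimate, and your tensorization route would discharge it. Your observation that the weaker Maurey--Pisier interpolation bound $e^{-2u^2/\pi^2}$ would also suffice is accurate for how the lemma is used downstream in Lemma~\ref{thm:l2_norm_upper_bound}: there the deviation is taken at $u=\sqrt{2(l-m)}$ only to obtain a failure probability exponentially small in $l-m$, so a degraded constant in the exponent would merely change the numerical constants in the event bound, not the structure of the result. One housekeeping point worth a sentence if you write this up fully: Herbst's argument presupposes $\mathbb{E}[e^{\lambda f(Z)}]<\infty$, which holds here because a Lipschitz function of a standard Gaussian vector has sub-Gaussian tails a priori via $|f(z)|\le|f(0)|+\|z\|_2$.
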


\begin{lemma}
    \label{lem:6}
    Suppose $\mathrm{range}(N)\subseteq \mathrm{range}(M)$. Then, for each real matrix $A$, it holds that $\left\|\mathcal{P}_NA\right\|\le \left\|\mathcal{P}_MA\right\|$ and that $\left\|\left(I - \mathcal{P}_M\right)A\right\|\le\left\|\left(I - \mathcal{P}_N\right)A\right\|$.
\end{lemma}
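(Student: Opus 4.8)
The plan is to derive both inequalities from a single elementary fact: when one orthogonal projector has its range contained in that of another, the smaller one factors through the larger. First I would record the commutation identity: if $Q_1,Q_2$ are orthogonal projectors with $\mathrm{range}(Q_1)\subseteq\mathrm{range}(Q_2)$, then $Q_2Q_1=Q_1$, hence also $Q_1Q_2=Q_1$ by transposing (projectors being symmetric). Indeed, for any $x$ the vector $Q_1x$ lies in $\mathrm{range}(Q_1)\subseteq\mathrm{range}(Q_2)$, so $Q_2$ fixes it. Applying this with $Q_1=\mathcal P_N$, $Q_2=\mathcal P_M$ — legitimate since $\mathrm{range}(\mathcal P_N)=\mathrm{range}(N)\subseteq\mathrm{range}(M)=\mathrm{range}(\mathcal P_M)$ by Definition~\ref{def:orthogonal_projector_definition} — gives $\mathcal P_N=\mathcal P_N\mathcal P_M$, so that for every real matrix $A$,
\begin{equation*}
\|\mathcal P_N A\|=\|\mathcal P_N(\mathcal P_M A)\|\le\|\mathcal P_N\|\,\|\mathcal P_M A\|\le\|\mathcal P_M A\|,
\end{equation*}
the final step using $0\preceq\mathcal P_N\preceq I$ (Definition~\ref{def:orthogonal_projector_definition}), i.e.\ $\|\mathcal P_N\|\le1$. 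Equivalently, $\mathcal P_N=\mathcal P_N\mathcal P_M$ yields $0\preceq\mathcal P_N\preceq\mathcal P_M$, whence by the Conjugation Rule (Lemma~\ref{lem:conjugation_rule}) $0\preceq A^{T}\mathcal P_N A\preceq A^{T}\mathcal P_M A$; comparing the spectral norms of these PSD matrices and using $\|\mathcal P_X A\|^{2}=\|A^{T}\mathcal P_X A\|$ for a symmetric idempotent $\mathcal P_X$ gives the same bound after taking square roots. This settles the first inequality.

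For the second inequality I would pass to the complementary projectors. By Definition~\ref{def:orthogonal_projector_definition}, $I-\mathcal P_M$ and $I-\mathcal P_N$ are again orthogonal projectors, onto $\mathrm{range}(M)^{\perp}$ and $\mathrm{range}(N)^{\perp}$ respectively. Orthogonal complementation reverses inclusions, so $\mathrm{range}(N)\subseteq\mathrm{range}(M)$ gives $\mathrm{range}(I-\mathcal P_M)=\mathrm{range}(M)^{\perp}\subseteq\mathrm{range}(N)^{\perp}=\mathrm{range}(I-\mathcal P_N)$. Applying exactly the argument of the previous paragraph with $Q_1=I-\mathcal P_M$ and $Q_2=I-\mathcal P_N$ then yields $\|(I-\mathcal P_M)A\|\le\|(I-\mathcal P_N)A\|$ for every $A$, which is the claim.

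The statement is elementary, so there is no real obstacle here; the only point that demands a little care is the direction of the inclusion after complementation — it flips, which is precisely why the residual inequality runs opposite to the projection inequality — together with the trivial degenerate cases $N=0$ (then $\mathcal P_N=0$) and $\mathrm{range}(M)=\mathbb R^{d}$ (then $I-\mathcal P_M=0$), in which each bound holds with one side equal to zero.
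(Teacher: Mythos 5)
Your proposal is correct and rests on the same key identity as the paper's proof, namely that $\mathrm{range}(N)\subseteq\mathrm{range}(M)$ forces $\mathcal{P}_M\mathcal{P}_N=\mathcal{P}_N$, with the second inequality obtained by passing to complementary projectors exactly as the paper does. Your primary route finishes by submultiplicativity and $\left\|\mathcal{P}_N\right\|\le 1$ rather than via the Loewner comparison $\mathcal{P}_N\preceq\mathcal{P}_M$ and the conjugation rule, but your ``equivalently'' aside is literally the paper's argument, so the two proofs are essentially the same.
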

\begin{proof}
    By applying Definition~\ref{def:orthogonal_projector_definition} and Lemma~\ref{lem:conjugation_rule}, we have $\mathcal{P}_{N}\preceq I$ and $\mathcal{P}_M\mathcal{P}_N\mathcal{P}_M \preceq \mathcal{P}_M$. Since $\mathrm{range}(N)\subset\mathrm{range}(M)$ implies $\mathcal{P}_M\mathcal{P}_N = \mathcal{P}_N$, we write
    \begin{equation*}
        \mathcal{P}_M\mathcal{P}_N\mathcal{P}_M
        = \mathcal{P}_N\mathcal{P}_M = \left(\mathcal{P}_M\mathcal{P}_N\right)^T = \mathcal{P}_N.    
    \end{equation*}
    That is to say, $\mathcal{P}_N\preceq \mathcal{P}_M$ provides $A^T\mathcal{P}_N A \preceq A^T\mathcal{P}_M A$. Then, we have
    \begin{equation*}
        \left\|\mathcal{P}_NA\right\|^2 = \left\|A^T \mathcal{P}_N A\right\| \le \left\|A^T \mathcal{P}_M A\right\| = \left\|\mathcal{P}_MA\right\|^2.
    \end{equation*}
    The second statement $\left\|\left(I - \mathcal{P}_M\right)A\right\|\le\left\|\left(I - \mathcal{P}_N\right)A\right\|$ follows from the first $\left\|\mathcal{P}_NA\right\|\le \left\|\mathcal{P}_MA\right\|$ by taking orthogonal complements.
\end{proof}

\begin{lemma}
    \label{lem:7}
    Suppose a real matrix satisfies $M\succeq 0$. Then
    \begin{equation*}
        I-\left(I+M\right)^{-1}\preceq M
    \end{equation*}
\end{lemma}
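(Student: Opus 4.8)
The plan is to diagonalize. Since $M$ is real symmetric and positive semidefinite, write the spectral decomposition $M = Q \Lambda Q^T$ with $Q$ orthogonal and $\Lambda = \mathrm{diag}(\mu_1, \dots, \mu_d)$, $\mu_i \ge 0$. Then $I + M = Q(I + \Lambda)Q^T$ is invertible (all eigenvalues $1 + \mu_i \ge 1 > 0$), and $(I+M)^{-1} = Q(I+\Lambda)^{-1}Q^T$. Hence
\begin{equation*}
  I - (I+M)^{-1} = Q\left(I - (I+\Lambda)^{-1}\right)Q^T, \qquad M - \bigl(I - (I+M)^{-1}\bigr) = Q\left(\Lambda - I + (I+\Lambda)^{-1}\right)Q^T.
\end{equation*}
By the conjugation rule (Lemma~\ref{lem:conjugation_rule}), the right-hand side is positive semidefinite if and only if the inner diagonal matrix is, so it suffices to check the scalar inequality $1 - \frac{1}{1+\mu} \le \mu$ for every $\mu \ge 0$.

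For the scalar step, rewrite $1 - \frac{1}{1+\mu} = \frac{\mu}{1+\mu}$. Then the claim $\frac{\mu}{1+\mu} \le \mu$ is immediate: since $\mu \ge 0$ we have $1 + \mu \ge 1$, so $\frac{1}{1+\mu} \le 1$, and multiplying the nonnegative quantity $\mu$ by something at most $1$ can only decrease it. Equivalently, $\mu - \frac{\mu}{1+\mu} = \frac{\mu^2}{1+\mu} \ge 0$. Applying this eigenvalue-by-eigenvalue shows $\Lambda - I + (I+\Lambda)^{-1} \succeq 0$, and conjugating back by $Q$ gives $M - \bigl(I - (I+M)^{-1}\bigr) \succeq 0$, i.e. $I - (I+M)^{-1} \preceq M$.

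There is essentially no obstacle here; the only thing to be careful about is the reduction from the matrix inequality to the scalar one, which is justified because $M$ and $(I+M)^{-1}$ share the eigenvectors $Q$, so the difference is simultaneously diagonalized — this is what lets us pass to scalars rather than needing a more delicate operator-monotonicity argument. (Alternatively, one could invoke operator monotonicity of $t \mapsto 1 - (1+t)^{-1} = t/(1+t)$ versus $t \mapsto t$ directly, but the explicit diagonalization is cleaner and self-contained.)
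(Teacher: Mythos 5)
Your proof is correct, but it takes a different route from the paper. You diagonalize $M = Q\Lambda Q^T$ and reduce the matrix inequality to the scalar fact $1 - \frac{1}{1+\mu} = \frac{\mu}{1+\mu} \le \mu$ for $\mu \ge 0$, which works because $M$ and $(I+M)^{-1}$ are simultaneously diagonalized by $Q$. The paper instead introduces the positive semidefinite square root $A = M^{1/2}$ and uses the algebraic identity $I - (I+A^2)^{-1} = (I+A^2)^{-1}A^2 = A(I+A^2)^{-1}A$, then applies the conjugation rule (Lemma~\ref{lem:conjugation_rule}) together with $(I+A^2)^{-1} \preceq I$ to conclude $A(I+A^2)^{-1}A \preceq A^2 = M$. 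The two arguments are of comparable length; yours is more elementary and makes the underlying scalar inequality explicit, while the paper's operator-style argument (following Halko et al.) avoids spectral decomposition entirely and fits the conjugation-rule machinery it has already set up for the other lemmas in that section. One small caveat in your write-up: the ``if and only if'' in your reduction step needs $Q$ to be invertible (which it is, being orthogonal), since the conjugation rule as stated only gives one direction; this is harmless but worth noting, and in any case you only need the ``if'' direction to conclude.
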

\begin{proof}
    Define $A = M^{1/2}$ as the positive semidefinite square root of $M$ promised by Theorem 7.2.6 in~\cite{horn1985matrix}, we have
    \begin{equation*}
        I - \left(I + A^2\right)^{-1} = \left(I+A^2\right)^{-1}A^2 = A\left(I+A^2\right)^{-1}A \preceq A^2 = M.
    \end{equation*}
    The first equality can be verified algebraically. 
    The second holds because rational functions of a diagonalizable matrix, such as $A$, commute. The last relation is because the conjugate rule $\left(I+A^2\right)^{-1}\preceq I$.
\end{proof}

\begin{lemma}(Proposition 8.3 in~\cite{halko2011finding})
    \label{lem:8}
    We have $\left\|M\right\|\le\left\|A\right\| + \left\|C\right\|$ for each partitioned positive semidefinite matrix
    \begin{equation*}
        M = \left[\begin{matrix}A & B \\ B^* & C\end{matrix}\right].
    \end{equation*}
\end{lemma}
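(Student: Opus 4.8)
The plan is to prove this norm inequality directly from the variational characterization of the spectral norm of a positive semidefinite matrix, combined with a generalized Cauchy--Schwarz bound on the off-diagonal block that positive semidefiniteness forces. The whole argument reduces to controlling a single scalar quadratic form.

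First I would record that since $M\succeq 0$ is Hermitian, its spectral norm equals its largest eigenvalue, so that $\left\|M\right\| = \max_{\left\|x\right\|=1} x^*Mx$. Partitioning a unit vector $x$ as $(u,v)$ conformally with the block structure of $M$ and expanding the quadratic form gives
\begin{equation*}
x^*Mx = u^*Au + 2\,\mathrm{Re}\left(u^*Bv\right) + v^*Cv.
\end{equation*}
Writing $a = u^*Au$, $c = v^*Cv$ and $\beta = u^*Bv$, the problem is reduced to bounding $a + 2\,\mathrm{Re}(\beta) + c$ over all unit $x$.

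The key step, which I expect to be the main obstacle, is to control the cross term $\mathrm{Re}(\beta)$ using only $M\succeq 0$. Here I would test the quadratic form of $M$ against the family of vectors $z = (su,tv)$ for arbitrary scalars $s,t$. Since $z^*Mz \ge 0$ for every $s,t$, the induced Hermitian form $\left|s\right|^2 a + 2\,\mathrm{Re}(\bar s t\,\beta) + \left|t\right|^2 c$ in the coefficients $(s,t)$ is positive semidefinite; its associated $2\times 2$ matrix $\left[\begin{smallmatrix} a & \beta \\ \bar\beta & c \end{smallmatrix}\right]$ must therefore have nonnegative determinant, which yields the generalized Cauchy--Schwarz inequality $\left|\beta\right| \le \sqrt{ac}$. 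In particular $a,c\ge 0$, consistent with $A$ and $C$ being principal submatrices of the positive semidefinite matrix $M$.

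Finally I would chain the estimates. Applying $\left|\beta\right|\le\sqrt{ac}$ gives $x^*Mx \le a + 2\sqrt{ac} + c = \left(\sqrt{a}+\sqrt{c}\right)^2$. Bounding the Rayleigh quotients $a \le \left\|A\right\|\left\|u\right\|^2$ and $c \le \left\|C\right\|\left\|v\right\|^2$, and then invoking the scalar Cauchy--Schwarz inequality together with the normalization $\left\|u\right\|^2 + \left\|v\right\|^2 = 1$, produces
\begin{equation*}
x^*Mx \le \left(\sqrt{\left\|A\right\|}\left\|u\right\| + \sqrt{\left\|C\right\|}\left\|v\right\|\right)^2 \le \left(\left\|A\right\| + \left\|C\right\|\right)\left(\left\|u\right\|^2 + \left\|v\right\|^2\right) = \left\|A\right\| + \left\|C\right\|.
\end{equation*}
Since this bound holds uniformly over unit vectors $x$, taking the maximum gives $\left\|M\right\| \le \left\|A\right\| + \left\|C\right\|$, as claimed.
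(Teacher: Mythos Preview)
Your proof is correct. The paper itself does not supply a proof of this lemma but simply cites it as Proposition~8.3 of Halko, Martinsson, and Tropp, so there is nothing in the paper to compare against directly. For reference, the argument in that source proceeds by a factorization: write $M = R^*R$ with $R = \left[\begin{matrix}P & Q\end{matrix}\right]$ partitioned conformally, so that $A = P^*P$ and $C = Q^*Q$, and then observe $\left\|M\right\| = \left\|R\right\|^2 = \left\|RR^*\right\| = \left\|PP^* + QQ^*\right\| \le \left\|PP^*\right\| + \left\|QQ^*\right\| = \left\|A\right\| + \left\|C\right\|$. Your route via the Rayleigh quotient and the $2\times 2$ Cauchy--Schwarz bound on the cross term is a bit longer but equally elementary, and it has the minor advantage of avoiding any appeal to a square-root or Cholesky-type factorization of $M$; both approaches are entirely standard.
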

\begin{lemma}
\label{lem:minimum_sigular_values_condition}
Let $M,\ N\in \mathbb{R}^{d\times d}$ be two positive definite matrices. If there is existing that
\begin{equation}
    \label{eq:lem_12_0}
    \left\|M - N\right\| \le \epsilon,
\end{equation}
the minimum singular values of $A$ and $B$, $\sigma_{\min}(M)$ and $\sigma_{\min}(N)$,  satisfy
\begin{equation}
    \label{eq:lem_12_1}
    \left|\sigma_{\min}(M) - \sigma_{\min}(N)\right| \le \epsilon.
\end{equation}
\end{lemma}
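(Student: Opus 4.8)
The plan is to deduce the bound from the variational characterization of the smallest singular value, namely $\sigma_{\min}(A)=\min_{\|u\|=1}\|Au\|$, which holds for every real square matrix $A$, and in particular for the positive definite matrices $M$ and $N$. (For a symmetric positive definite matrix this minimum coincides with the smallest eigenvalue, so conceptually the claim is just Weyl's perturbation inequality $|\lambda_{\min}(M)-\lambda_{\min}(N)|\le\|M-N\|$ specialized to $M$ and $N$; I will nonetheless give the self-contained argument below.) Everything then reduces to a one-line triangle-inequality estimate once this characterization is in hand.

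First I would fix a unit vector $u$ with $\|Mu\|=\sigma_{\min}(M)$, which exists by compactness of the unit sphere in $\mathbb{R}^d$. Then, using the defining inequality $\sigma_{\min}(N)\le\|Nu\|$ for this particular $u$, the triangle inequality, the submultiplicativity bound $\|(N-M)u\|\le\|N-M\|\,\|u\|=\|N-M\|$, and finally the hypothesis \eqref{eq:lem_12_0}, I would write
\begin{equation*}
\sigma_{\min}(N)\le\|Nu\|\le\|Mu\|+\|(N-M)u\|\le\sigma_{\min}(M)+\|M-N\|\le\sigma_{\min}(M)+\epsilon.
\end{equation*}

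Next I would repeat the same chain of inequalities with the roles of $M$ and $N$ interchanged, which yields $\sigma_{\min}(M)\le\sigma_{\min}(N)+\epsilon$. Combining the two inequalities gives $|\sigma_{\min}(M)-\sigma_{\min}(N)|\le\epsilon$, which is exactly \eqref{eq:lem_12_1}.

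There is essentially no hard step here. The only points requiring a little care are invoking the correct characterization of $\sigma_{\min}$ — as a \emph{minimum} of $\|Au\|$ over the unit sphere, not a maximum — and observing that positive definiteness is used only to guarantee that $M$ and $N$ are square and that $\sigma_{\min}$ is the quantity of interest; the estimate itself relies on nothing beyond the triangle inequality and operator-norm submultiplicativity, so it would in fact go through verbatim for arbitrary matrices of the same shape.
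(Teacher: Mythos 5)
Your proof is correct. It differs mildly in mechanics from the paper's: you argue directly from the characterization $\sigma_{\min}(A)=\min_{\|u\|=1}\|Au\|$ and a triangle inequality, picking the minimizing unit vector for $M$ and bounding $\|Nu\|\le\|Mu\|+\|(N-M)u\|$, then symmetrizing. The paper instead argues by contradiction using the Rayleigh-quotient characterization: it assumes $\sigma_{\min}(M)-\sigma_{\min}(N)>\epsilon$, takes the minimizing singular vectors $v_M,v_N$, uses $v_N^TMv_N\ge v_M^TMv_M$ to conclude $v_N^T(M-N)v_N>\epsilon$, and contradicts $\|M-N\|\le\epsilon$; positive definiteness is what lets the paper identify $\sigma_{\min}$ with $\min_{\|v\|=1}v^TMv$. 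The two arguments are the same perturbation bound (Weyl's inequality) seen through two different variational formulas. Your version buys slightly more: it is direct rather than by contradiction, and, as you note, it holds for arbitrary matrices of the same shape since it never uses symmetry or definiteness, whereas the paper's Rayleigh-quotient step genuinely needs the positive definite hypothesis. One small stylistic point: the step $\|(N-M)u\|\le\|N-M\|\,\|u\|$ is just the definition of the operator norm rather than submultiplicativity of a matrix product, but the estimate is of course valid.
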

\begin{proof}
To make a contradiction with Eq.~\eqref{eq:lem_12_1}, we assume that the minimum singular values of $A$ and $B$ have
\begin{equation}
    \label{eq:lem_12_2}
    \sigma_{\min}(M) - \sigma_{\min}(N) > \epsilon.
\end{equation}
For the singular vectors $v_M$ and $v_N$ of $M$ and $N$ which respectively correspond to $\sigma_{\min}(M)$ and $\sigma_{\min}(N)$, we have
\begin{equation}
    \label{eq:lem_12_3}
    \begin{split}
        v_M^T M v_M  - v_N^T N v_N >\epsilon \mathop{\Rightarrow}^{\tiny\mathcircled{1}} v_N^T M v_N - v_N^T N v_N > \epsilon \mathop{\Rightarrow}^{\tiny\mathcircled{2}} v_N^T(M-N)v_N >\epsilon
    \end{split}
\end{equation}
where the inequality $\tiny\mathcircled{1}$ uses $v_N^TMv_N \ge v_M^TMv_M$. Since the inequality $\tiny\mathcircled{2}$ contradict with the given condition Eq.~\eqref{eq:lem_12_0}, the assumption Eq.~\eqref{eq:lem_12_1} will never be satisfied. Hence, we have
\begin{equation*}
    \sigma_{\min}(M) - \sigma_{\min}(N) \le \epsilon.
\end{equation*}
In addition, we can use a similar way to verify that
\begin{equation*}
    \sigma_{\min}(N) - \sigma_{\min}(M) \le \epsilon.
\end{equation*}
Hence, the proof is completed.
\end{proof}
\begin{lemma}
    \label{the:2}
    Let $A\in \mathbb{R}^{d\times d}$ be a positive definite matrix.  $A$ and a matrix $\tilde{A}$ which can be presented as
    \begin{equation*}
        \label{eq:lemma_10_formulation_1}
        \begin{split}
            & A = Q\Sigma Q^T = \left[\begin{matrix}Q_{\parallel} & Q_{\perp}\end{matrix}\right]\left[\begin{matrix}\Sigma_1 & \\ & \Sigma_2
            \end{matrix}\right]\left[\begin{matrix}Q_{\parallel}^T \\ Q_{\perp}^T\end{matrix}\right],\ \Sigma = \mathrm{diag}\left(\sigma_1,\ \sigma_2,\ ...,\ \sigma_d\right),\ \sigma_1\ge \sigma_2\ge ...\ge \sigma_{d}> 0,\\
            & \tilde{A} = Q\tilde{\Sigma} Q^T = \left[\begin{matrix}Q_{\parallel} & Q_{\perp}\end{matrix}\right]\left[\begin{matrix}\tilde{\Sigma}_1 & \\ & \tilde{\Sigma}_2
            \end{matrix}\right]\left[\begin{matrix}Q_{\parallel}^T \\ Q_{\perp}^T\end{matrix}\right],\ \tilde{\Sigma} = \mathrm{diag}\left(\tilde{\sigma}_1,\ \tilde{\sigma}_2,\ ...,\ \tilde{\sigma}_d\right),\ \left|\tilde{\sigma}_1\right|, \left|\tilde{\sigma}_2\right|, ..., \left|\tilde{\sigma}_{m}\right|> 0,
        \end{split}
    \end{equation*}
    where $Q$ is an unitary matrix and
    \begin{equation*}
        \begin{split}
            Q_{\parallel}\in \mathbb{R}^{d\times m},\quad Q_{\perp}\in \mathbb{R}^{d\times(d-m)},\quad \Sigma_1,\tilde{\Sigma}_1\in \mathbb{R}^{m\times m},\quad \Sigma_2,\tilde{\Sigma}_2\in \mathbb{R}^{(d-m)\times (d-m)}.
        \end{split}
    \end{equation*}
    Choose a test matrix $\Omega\in \mathbb{R}^{d\times l}$ ($m\le l$), and construct the sample matrix $Y = A\Omega$. Assuming that $Q_{\parallel}^T\Omega$ has full row rank, the orthogonal projector $\mathcal{P}_{Y}$ satisfies
    \begin{equation*}
        \left\|\left(I-\mathcal{P}_Y\right)\tilde{A}\right\|^2 \le \left\|\Sigma_2 Q_{\perp}^T\Omega\cdot \left(Q_{\parallel}^T\Omega\right)^{\dagger}\right\|^2\cdot\left\|\Sigma_1^{-1}\tilde{\Sigma}_1\right\|^2 + \left\|\tilde{\Sigma}_2\right\|^2,
    \end{equation*}
    where $\left(Q_{\parallel}^T\Omega\right)^{\dagger}$ means the Moore–Penrose inverse of matrix $Q_{\parallel}^T\Omega$.
\end{lemma}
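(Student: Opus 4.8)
The plan is to follow the classical randomized range-finder analysis of Halko--Martinsson--Tropp (Theorem 9.1 / Theorem 10.5 in \cite{halko2011finding}), but adapted so that the projector is built from $Y = A\Omega$ (the \emph{true} matrix $A$) while the quantity being projected is the perturbed matrix $\tilde A$, which shares the same eigenbasis $Q$ but has a different spectrum $\tilde\Sigma$. First I would fix the orthonormal coordinate system given by $Q$, so that without loss of generality $A = \Sigma$ and $\tilde A = \tilde\Sigma$ are diagonal, and partition $\Omega$ conformally as $\Omega_1 = Q_\parallel^T\Omega \in \mathbb{R}^{m\times l}$ and $\Omega_2 = Q_\perp^T\Omega \in \mathbb{R}^{(d-m)\times l}$. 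Then $Y = A\Omega$ has the block form with top block $\Sigma_1\Omega_1$ and bottom block $\Sigma_2\Omega_2$.

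The core step is to exhibit a \emph{specific} matrix whose range lies inside $\mathrm{range}(Y)$ and which is a good approximation to $\tilde A$ in the relevant sense, then invoke the minimality of the orthogonal projector. Concretely, since $\|(I - \mathcal{P}_Y)\tilde A\| = \|(I-\mathcal{P}_Y)\tilde A X\|$ is bounded by $\|(I-\mathcal{P}_Y)Z\|$ for any $Z$ with $\mathrm{range}(\tilde A)\subseteq\mathrm{range}(Z)$ appropriately chosen, the standard trick is to consider the matrix $Y\cdot(\Sigma_1\Omega_1)^{\dagger}$-type object. Following HMTropp, I would form the test "flattening" matrix
\begin{equation*}
    F = \begin{bmatrix} I_m \\ \Sigma_2\Omega_2(\Sigma_1\Omega_1)^{\dagger} \end{bmatrix} = \begin{bmatrix} I_m \\ \Sigma_2\Omega_2\Omega_1^{\dagger}\Sigma_1^{-1} \end{bmatrix},
\end{equation*}
noting $(\Sigma_1\Omega_1)^{\dagger} = \Omega_1^{\dagger}\Sigma_1^{-1}$ because $\Sigma_1$ is square invertible and $\Omega_1$ has full row rank. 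One checks $\mathrm{range}(F) \subseteq \mathrm{range}(Y)$ (since $Y F' = $ the first $m$ columns of a suitable basis — more precisely $\mathrm{range}(A_1 F)=\mathrm{range}(Q_\parallel)$-ish structure), hence $\mathcal{P}_Y F = F$, so $(I-\mathcal{P}_Y)$ annihilates $\mathrm{range}(F)$. Then I bound $\|(I-\mathcal{P}_Y)\tilde A\|$ by decomposing $\tilde A$ relative to the column space of $F$: write the action of $\tilde A$ on the top-$m$ coordinates as $\tilde\Sigma_1$, which is "captured" by $F$ up to the factor $\Sigma_1^{-1}\tilde\Sigma_1$, and the action on the bottom coordinates $\tilde\Sigma_2$ which is simply left uncaptured. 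This is exactly the Proposition 8.2-style bound in \cite{halko2011finding}: $\|(I-\mathcal{P}_Y)\tilde A\|^2 \le \|\tilde\Sigma_2\|^2 + \|\Sigma_2\Omega_2\Omega_1^{\dagger}\cdot\Sigma_1^{-1}\tilde\Sigma_1\|^2$, and submultiplicativity gives the stated $\|\Sigma_2 Q_\perp^T\Omega(Q_\parallel^T\Omega)^{\dagger}\|^2\cdot\|\Sigma_1^{-1}\tilde\Sigma_1\|^2 + \|\tilde\Sigma_2\|^2$.

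In more detail, the key algebraic identity to establish is a deterministic one: for any matrix with the block structure $Y = \begin{bmatrix}\Sigma_1\Omega_1 \\ \Sigma_2\Omega_2\end{bmatrix}$ with $\Omega_1$ full row rank, and the orthogonal projector $\mathcal{P}_Y$ onto $\mathrm{range}(Y)$, one has the operator bound $(I-\mathcal{P}_Y) \preceq (I - \mathcal{P}_{F})$ in the appropriate sense where $F$ is as above, because $\mathrm{range}(F)\subseteq\mathrm{range}(Y)$ forces $\mathcal{P}_{\mathrm{range}(F)} \preceq \mathcal{P}_Y$ and hence $I - \mathcal{P}_Y \preceq I - \mathcal{P}_{\mathrm{range}(F)}$ (this is essentially Lemma~\ref{lem:6} in the excerpt). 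Then $\|(I-\mathcal{P}_Y)\tilde A\|^2 = \|\tilde A(I-\mathcal{P}_Y)\tilde A\| \le \|\tilde A(I - \mathcal{P}_{\mathrm{range}(F)})\tilde A\| = \|(I-\mathcal{P}_{\mathrm{range}(F)})\tilde A\|^2$, using that $\tilde A$ is symmetric (it has the same eigenbasis $Q$). Finally, computing $(I - \mathcal{P}_{\mathrm{range}(F)})\tilde A$ explicitly and bounding it via Lemma~\ref{lem:8} (the $2\times 2$ block positive-semidefinite norm bound) on the matrix $\tilde A^T(I-\mathcal{P}_{\mathrm{range}(F)})\tilde A$, partitioned by the top-$m$/bottom coordinates, delivers the two-term bound.

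The main obstacle I anticipate is the bookkeeping for the perturbed spectrum: in the original HMTropp argument $\tilde A = A$, so the "captured" part contributes zero and only the bottom block $\|\Sigma_2\|$ survives alongside the oversampling term; here I must carefully track how the factor $\Sigma_1^{-1}\tilde\Sigma_1$ enters when $F$ (built from the $A$-spectrum) is used to approximate $\tilde A$ (with a different spectrum), and verify that the cross-terms in the block expansion of $(I-\mathcal{P}_{\mathrm{range}(F)})\tilde A$ are handled correctly so that Lemma~\ref{lem:8} applies — i.e., that the relevant $2\times2$ matrix really is positive semidefinite with the claimed diagonal blocks. A secondary technical point is ensuring $\mathcal{P}_Y F = F$ rigorously: one should produce an explicit matrix $W$ with $YW = F$ up to an invertible right factor, e.g. $W = \Omega_1^{\dagger}\Sigma_1^{-1}$ combined with the observation that the top block of $YW$ equals $\Sigma_1\Omega_1\Omega_1^{\dagger}\Sigma_1^{-1} = I_m$ (using $\Omega_1\Omega_1^{\dagger} = I_m$ from full row rank) and the bottom block equals $\Sigma_2\Omega_2\Omega_1^{\dagger}\Sigma_1^{-1}$, which is precisely the bottom block of $F$. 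Everything else is routine submultiplicativity and unitary invariance.
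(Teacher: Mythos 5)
Your proposal is correct and follows essentially the same route as the paper's own proof: rotate into the eigenbasis $Q$, form the full-column-rank matrix $\bigl[\begin{smallmatrix}I\\ F\end{smallmatrix}\bigr]$ with $F=\Sigma_2 Q_\perp^T\Omega\,(Q_\parallel^T\Omega)^{\dagger}\Sigma_1^{-1}$ lying inside $\mathrm{range}(Q^TY)$, invoke the projector monotonicity of Lemma~\ref{lem:6}, expand $I-\mathcal{P}_Z$ blockwise using Lemma~\ref{lem:7}, and finish with the partitioned-PSD norm bound of Lemma~\ref{lem:8} together with submultiplicativity to extract the $\|\Sigma_1^{-1}\tilde{\Sigma}_1\|^2$ factor. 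The only differences are notational (your $F$ is the paper's $Z$) and a slightly less explicit verification that $\mathrm{range}(Z)\subseteq\mathrm{range}(Q^TY)$, which the paper handles identically via $Z=Q^TY(Q_\parallel^T\Omega)^{\dagger}\Sigma_1^{-1}$.
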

\begin{proof}
    For the orthogonal projector $\mathcal{P}_{Y}$ and the unitary matrix $Q$, it is clear that $Q^T\mathcal{P}_{Y}Q$ is an orthogonal projector due to Definition~\ref{def:orthogonal_projector_definition}. Evidently,
    \begin{equation*}
        \mathrm{range}\left(Q^T\mathcal{P}_{Y}Q\right) = Q^T\mathrm{range}\left(Y\right) = \mathrm{range}\left(Q^TY\right).
    \end{equation*}
    Since ranges determine orthogonal projectors, we have $Q^T\mathcal{P}_{Y}Q = \mathcal{P}_{Q^TY}$ and the following equation
    \begin{equation}
        \begin{split}
            \label{eqn:lemma11_23}
            \left\|\left(I - \mathcal{P}_Y\right)\tilde{A}\right\| =  \left\|Q^T\left(I- \mathcal{P}_Y\right)Q\left[\begin{matrix}\tilde{\Sigma}_1 Q_{\parallel}^T \\ \tilde{\Sigma}_2 Q_{\perp}^T\end{matrix}\right]\right\| = \left\|\left(I - \mathcal{P}_{Q^TY}\right)\left[\begin{matrix}\tilde{\Sigma}_1 Q_{\parallel}^T \\ \tilde{\Sigma}_2 Q_{\perp}^T\end{matrix}\right]\right\|
        \end{split}
    \end{equation}
    where the first euqation is because the unitary invariance of the spectral norm. Subsequently, we construct a matrix $Z$ whose column space belongs to the range of $Q^TY$ to scale the right-most item of the Eq.\eqref{eqn:lemma11_23} as follows
    \begin{equation}
        \label{eqn:lemma11_24}
        \begin{split}
            Z &=Q^TY\left(Q_{\parallel}^T\Omega\right)^{\dagger}\Sigma_1^{-1}  = \left[\begin{matrix}\Sigma_1 Q_{\parallel}^T\Omega \\ \Sigma_2 Q_{\perp}^T\Omega\end{matrix}\right] \cdot \left(Q_{\parallel}^T\Omega\right)^{\dagger}\Sigma_1^{-1} \mathop{=}^{\tiny\mathcircled{1}} \left[\begin{matrix}I \\ F\end{matrix}\right],\\ 
            &where\ F= \Sigma_2 Q_{\perp}^T\Omega\cdot \left(Q_{\parallel}^T\Omega\right)^{\dagger}\Sigma_1^{-1}.
        \end{split}
    \end{equation}
    $\tiny\mathcircled{1}$ uses the full row rank properties of $Q_{\parallel}^T\Omega$, and the construction of $Z$ denotes that $\mathrm{range}(Z)\subset \mathrm{range}\left(Q^TY\right)$. Thus, Lemma~\ref{lem:6} implies
    \begin{equation}
        \label{eqn:orthogonal_relaxiation}
        \begin{split}
            \left\|\left(I - \mathcal{P}_{Q^TY}\right)\left[\begin{matrix}\tilde{\Sigma}_1 Q_{\parallel}^T \\ \tilde{\Sigma}_2 Q_{\perp}^T\end{matrix}\right]\right\| \le \left\|\left(I - \mathcal{P}_{Z}\right)\left[\begin{matrix}\tilde{\Sigma}_1 Q_{\parallel}^T \\ \tilde{\Sigma}_2 Q_{\perp}^T\end{matrix}\right]\right\|.
        \end{split}
    \end{equation}
    Squaring both sides of Eq.\eqref{eqn:orthogonal_relaxiation},  we obtain
    \begin{equation}
        \label{eqn:a8}
        \begin{split}
            & \left\|\left(I - \mathcal{P}_{Q^TY}\right)\left[\begin{matrix}\tilde{\Sigma}_1 Q_{\parallel}^T \\ \tilde{\Sigma}_2 Q_{\perp}^T\end{matrix}\right]\right\|^2 \le \left\|\left(I - \mathcal{P}_{Z}\right)\left[\begin{matrix}\tilde{\Sigma}_1 Q_{\parallel}^T \\ \tilde{\Sigma}_2 Q_{\perp}^T\end{matrix}\right] \right\|^2 \\
            = & \left\|\left[\begin{matrix}Q_{\parallel} & Q_{\perp}\end{matrix}\right]\left[\begin{matrix}\tilde{\Sigma}_1 & \\ & \tilde{\Sigma}_2
            \end{matrix}\right] \left(I - \mathcal{P}_{Z}\right) \left[\begin{matrix}\tilde{\Sigma}_1 & \\ & \tilde{\Sigma}_2
            \end{matrix}\right]\left[\begin{matrix}Q_{\parallel}^T \\ Q_{\perp}^T\end{matrix}\right] \right\| \le \left\|\left[\begin{matrix}\tilde{\Sigma}_1 & \\ & \tilde{\Sigma}_2
            \end{matrix}\right]\left(I - \mathcal{P}_{Z}\right)\left[\begin{matrix}\tilde{\Sigma}_1 & \\ & \tilde{\Sigma}_2
            \end{matrix}\right]\right\|,
        \end{split}
    \end{equation}
    where the matrix $Z$ has a full column rank because of Eq.~\eqref{eqn:lemma11_24}. Thus, we have
    \begin{equation*}
        \begin{split}
            & \mathcal{P}_{Z}=Z\left(Z^TZ\right)^{-1}Z^T = \left[\begin{matrix}I \\ F\end{matrix}\right]\left(I + F^TF\right)^{-1}\left[\begin{matrix}I & F^T\end{matrix}\right]\\
            \Rightarrow & I - \mathcal{P}_{Z} = \left[\begin{matrix}I - \left(I+F^TF\right)^{-1} & -\left(I+F^TF\right)^{-1}F^T\\ -F\left(I+F^TF\right)^{-1} & I-F\left(I+F^TF\right)^{-1}F^T \end{matrix}\right].
        \end{split}
    \end{equation*}
    Applying Lemma \ref{lem:7}, the top-left block verifies
    \begin{equation*}
        \begin{split}
            I-\left(I+F^TF\right)^{-1} \preceq F^TF.
        \end{split}
    \end{equation*}
    Besides, the bottom-right block satisfies
    \begin{equation*}
        \begin{split}
            I - F\left(I+F^TF\right)^{-1}F^T \preceq I.
        \end{split}
    \end{equation*}
    For convenience, we abbreviate $D = -\left(I+F^TF\right)^{-1}F^T$, the matrix $I-\mathcal{P}_{Z}$ satisfies
    \begin{equation*}
        \begin{split}
            & I-\mathcal{P}_{Z} \preceq \left[\begin{matrix} F^TF & D \\ D^T & I\end{matrix}\right]\\
            \Rightarrow & \left[\begin{matrix}\tilde{\Sigma}_1 & \\ & \tilde{\Sigma}_2
            \end{matrix}\right]\left(I - \mathcal{P}_{Z}\right)\left[\begin{matrix}\tilde{\Sigma}_1 & \\ & \tilde{\Sigma}_2
            \end{matrix}\right] \preceq \left[\begin{matrix}\tilde{\Sigma}_1F^TF\tilde{\Sigma}_1 & \tilde{\Sigma}_1D\tilde{\Sigma}_2 \\ \tilde{\Sigma}_2D^T\tilde{\Sigma}_1 & \tilde{\Sigma}_2^2\end{matrix}\right].
        \end{split}
    \end{equation*}
    Subsequently, Lemma \ref{lem:8} provides the $l_2$ norm upper bound of the above matrix as
    \begin{equation*}
        \begin{split}
            \left\|\left[\begin{matrix}\tilde{\Sigma}_1 & \\ & \tilde{\Sigma}_2
            \end{matrix}\right]\left(I - \mathcal{P}_{Z}\right)\left[\begin{matrix}\tilde{\Sigma}_1 & \\ & \tilde{\Sigma}_2
            \end{matrix}\right]\right\| \le \left\|\tilde{\Sigma}_1F^TF\tilde{\Sigma}_1\right\|+\left\|\tilde{\Sigma}_2^2\right\| = \left\|F\tilde{\Sigma}_1\right\|^2 + \left\|\tilde{\Sigma}_2\right\|^2.
        \end{split}
    \end{equation*}
    As our construction in Eq.~\eqref{eqn:lemma11_24}, we have $F = \Sigma_2 Q_{\perp}^T\Omega\cdot \left(Q_{\parallel}^T\Omega\right)^{\dagger}\Sigma_1^{-1}$. Thus, we have
    \begin{equation*}
        \begin{split}
            \left\|\left[\begin{matrix}\tilde{\Sigma}_1 & \\ & \tilde{\Sigma}_2
            \end{matrix}\right]\left(I - \mathcal{P}_{Z}\right)\left[\begin{matrix}\tilde{\Sigma}_1 & \\ & \tilde{\Sigma}_2
            \end{matrix}\right]\right\| \le \left\|\Sigma_2 Q_{\perp}^T\Omega\cdot \left(Q_{\parallel}^T\Omega\right)^{\dagger}\right\|^2\cdot\left\|\Sigma_1^{-1}\tilde{\Sigma}_1\right\|^2 + \left\|\tilde{\Sigma}_2\right\|^2
        \end{split}
        \label{eqn:a9}
    \end{equation*}
    Finally, after introducing the inequality into Eq.~\eqref{eqn:a8}, the proof is completed.
\end{proof}

\section{Proofs in Section 4}
\subsection{Proof of Lemma 4.1}
\label{lem:4_1}
\begin{proof}
    According to the construction of $Y_{2q+1}$ (Step.~5 to Step.~7 in Algorithm 1) and $Z$ (Step.~9 in Algorithm 1), we have
    \begin{equation*}
        \begin{split}
            Y_{2q+1} = \left(\nabla^2 f_{B}(x_t)\right)^{2q+1}\Omega\ \in \mathbb{R}^{d\times l },\quad Z^TU = U^T\nabla^2 f_{B}(x_t)U,
        \end{split}
    \end{equation*}
    where $U$ and $Y_{2q+1}$ satisfy 
    \begin{equation*}
        \begin{split}
            U^TU = I,\quad Y_{2q+1} = UR,\quad \mathrm{range}(Y) = \mathrm{range}(U),\quad \mathcal{P}_{Y} =Y(Y^TY)^{-1}Y^T =  UU^T.
        \end{split}
    \end{equation*}
    Here, we denote the \emph{orthogonal projector} of $Y$ as $\mathcal{P}_{Y}=UU^T$. With the approximate Hessian $\hat{H}_B^{-1}(x_t)$ (Eq.~(6) in Section.~2.2), we have the Hessian approximation error satisfies 
    \begin{equation}
        \begin{split}
            & \left\|\hat{H}_B(x_t) - \nabla^2 f_{B}(x_t)\right\| = \left\|U\left(U^T\nabla^2 f_{B}(x_t)U\right)U^T + \lambda \left(I -UU^T\right) - \nabla^2 f_{B}(x_t)\right\|\\
            = & \left\|U\left(U^T\nabla^2 f_{B}(x_t)U\right)U^T - UU^T\nabla^2 f_{B}(x_t) + \lambda UU^T\left(I- UU^T\right) \right.\\
            & \left.+ UU^T\nabla^2 f_{B}(x_t) + \lambda \left(I- UU^T\right)-\nabla^2 f_{B}(x_t)\right\|\\
            \le & \left\|UU^T\left(\nabla^2 f_{B}(x_t) - \lambda I\right)\left(UU^T - I\right)\right\| + \left\|\left(UU^T - I\right)\left(\nabla^2 f_{B}(x_t) - \lambda I\right)\right\|\\
            \le & 2\left\|\left(UU^T - I\right)\left(\nabla^2 f_{B}(x_t) - \lambda I\right)\right\|.
        \end{split}
        \label{eqn:approximation_upper_bound}
    \end{equation}
     With algorithm settings and positive definite properties of $\nabla^2 f_{B}(x_t)$, we reformulate the $Y$ and introduce an auxiliary variable $\tilde{A}$ in the following
    \begin{equation}
        \begin{split}
            Y_{2q+1} & = \left(\nabla^2 f_{B}(x_t)\nabla^2 f_{B}(x_t)^T\right)^q\nabla^2 f_{B}(x_t)\Omega \\
            &=  Q\Sigma^{2q+1}Q^T\Omega=\left[\begin{matrix}Q_{\parallel} & Q_{\perp}\end{matrix}\right]\left[\begin{matrix}\Sigma_1^{2q+1} & \\ & \Sigma_2^{2q+1}
            \end{matrix}\right]\left[\begin{matrix}P_{\parallel}^T \\ P_{\perp}^T\end{matrix}\right]\Omega \\
            \tilde{A} & = \left(\left(\nabla^2 f_{B}(x_t) - \lambda I\right)\cdot\left(\nabla^2 f_{B}(x_t) - \lambda I\right)^T\right)^q\cdot\left(\nabla^2 f_{B}(x_t) - \lambda I\right)\\
            &= Q\tilde{\Sigma}^{2q+1}Q^T=\left[\begin{matrix}Q_{\parallel} & Q_{\perp}\end{matrix}\right]\left[\begin{matrix}\tilde{\Sigma}_1^{2q+1} & \\ & \tilde{\Sigma}_2^{2q+1}
            \end{matrix}\right]\left[\begin{matrix}Q_{\parallel}^T \\ Q_{\perp}^T\end{matrix}\right],
        \end{split}
        \label{eqn:power_scheme_reduction}
    \end{equation}
    where we set the SVD of the batch Hessian and diagonal elements of $\Sigma$ as
    \begin{equation*}
        \begin{split}
            & \nabla^2 f_{B}(x_t) = Q\Sigma Q^T,\quad Q_{\parallel}\in\mathbb{R}^{d\times m}, \quad Q_{\perp}\in\mathbb{R}^{d\times(d-m)}\quad and\quad \Sigma = \mathrm{diag}\left\{\sigma_1^t, \sigma_2^t, ..., \sigma_{d}^t\right\}.
        \end{split}
    \end{equation*}
    Thus, we have the following inequality according to Lemma \ref{lem:power_iteration_inequality}
    \begin{equation}
        \label{eqn:actual_error_bound}
        \begin{split}
            \left\|\left(I - UU^T\right)\cdot\left(\nabla^2 f_{B}(x_t) - \lambda I\right)\right\| 
            \le \left\|\left(I - UU^T\right)\cdot \tilde{A} \right\|^{\frac{1}{2q+1}} = \underbrace{\left\|\left(I - \mathcal{P}_Y\right)\cdot \tilde{A} \right\|^{\frac{1}{2q+1}}}_{part\ 1}.
        \end{split}
    \end{equation}
    Then, we introduce Lemma \ref{the:2} to bound the $part\ 1$ of Eq.\eqref{eqn:actual_error_bound} as
    \begin{equation}
        \begin{split}
            & \left\|\left(I - \mathcal{P}_{Y}\right)\tilde{A}\right\|^2\le \left\|\Sigma_2^{2q+1}Q_{\perp}^T\Omega\cdot \left(Q_{\parallel}^T\Omega\right)^{\dagger}\right\|^2 \cdot \left\|\Sigma_1^{-1}\tilde{\Sigma}_1\right\|^{4q+2} + \left\|\tilde{\Sigma}_2\right\|^{4q+2}\\
            \Rightarrow & \left\|\left(I - \mathcal{P}_{Y}\right) \tilde{A}\right\| \le \mathrm{max}\left\{\left|\frac{\sigma_1^t-\lambda}{\sigma_1^t}\right|, \left|\frac{\sigma_m^t-\lambda}{\sigma_m^t}\right|\right\}^{2q+1}\left\|\Sigma_2^{2q+1}Q_{\perp}^T\Omega\cdot \left(Q_{\parallel}^T\Omega\right)^{\dagger}\right\|+\left\|\tilde{\Sigma}_2\right\|^{2q+1}.
        \end{split}
        \label{eqn:a22}
    \end{equation}
    Subsequently, we can introduce a test matrix $\Omega$ with some special properties to scale Eq.\eqref{eqn:a22}. We set $\Omega\in \mathbb{R}^{d\times l}$ as a standard Gaussian matrix, and $m\le l - 4$ where $l \ll d$. As a result, the matrix $\left[Q_{\parallel}^T\Omega,\  Q_{\perp}^T\Omega\right]$ is also a standard Gaussian matrix because of the unitary of $Q$ and the rotational invariance of $\Omega$. Besides, $Q_{\parallel}^T\Omega$ and $Q_{\perp}^T\Omega$ are nonoverlapping submatrices of $\left[Q_{\parallel}^T\Omega,\  Q_{\perp}^T\Omega\right]$, and these two matrices are stochastically independent. Hence, we can learn how the error depends on the matrix $Q_{\perp}^T\Omega$ by conditioning on the event that $Q_{\parallel}^T\Omega$ is relatively regular. We set a \textbf{event} as
    \begin{equation}
        \label{eqn:a11}
        \begin{split}
            E_{\theta} = \left\{P_{\parallel}^T\Omega:\ \left\|\left(P_{\parallel}^T\Omega\right)^{\dagger}\right\|\le \frac{e\sqrt{l}}{l-m+1}\cdot \theta\quad and\quad \left\|\left(P_{\parallel}^T\Omega\right)^{\dagger}\right\|_F\le \sqrt{\frac{12m}{l-m}}\cdot \theta \right\}.
        \end{split}
    \end{equation}
    According to Lemma \ref{lem:3}, we find
    \begin{equation}
        \label{eqn:a12}
        \begin{split}
        \mathbb{P}\left(E_{\theta}^c\right) = 1 - \mathbb{P}\left(E_{\theta}\right) \le \theta^{m-l-1}+4\theta^{m-l}\le 5\theta^{m-l}.
        \end{split}
    \end{equation}
    Consider the function $h(X) = \left\|\Sigma_2^{2q+1}X \left(Q_{\parallel}^T\Omega\right)^{\dagger}\right\|$ whose Lipschitz constant $L$ can be calculated in the following
    \begin{equation*}
        \begin{split}
        & \left|h(X) - h(Y)\right|\le \left\|\Sigma_2^{2q+1}\left(X-Y\right) \left(Q_{\parallel}^T\Omega\right)^{\dagger}\right\| \\
        \le & \left\|\Sigma_2^{2q+1}\right\|\left\|X-Y\right\|\left\|\left(Q_{\parallel}^T\Omega\right)^{\dagger}\right\| \le \left\|\Sigma_2^{2q+1}\right\|\left\|X-Y\right\|_F\left\|\left(Q_{\parallel}^T\Omega\right)^{\dagger}\right\|.
        \end{split}
    \end{equation*}
    That is to say, $L\le \left\|\Sigma_2^{2q+1}\right\|\left\|\left(Q_{\parallel}^T\Omega\right)^{\dagger}\right\| $. With Lemma \ref{lem:4}, we have
    \begin{equation*}
        \begin{split}
            \mathbb{E}\left[h\left(Q_{\perp}^T\Omega\right) \big| Q_{\parallel}^T\Omega\right] \le \left\|\Sigma_2^{2q+1}\right\|\left\|\left( Q_{\parallel}^T\Omega\right)^{\dagger}\right\|_F + \left\|\Sigma_2^{2q+1}\right\|_F\left\|\left( Q_{\parallel}^T\Omega\right)^{\dagger}\right\|.
        \end{split}
    \end{equation*}
    Applying the concentration measure inequality, Lemma \ref{lem:concentration_for_gaussian_matrix_functions}, conditionally to the random variable $h\left(Q_{\perp}^T\Omega\right)$ results in
    \begin{equation*}
        \begin{split}
            \mathbb{P}\left\{\left\|\Sigma_2^{2q+1}Q_{\perp}^T\Omega\cdot \left(Q_{\parallel}^T\Omega\right)^{\dagger}\right\| > \left\|\Sigma_2^{2q+1}\right\|\left\|\left( Q_{\parallel}^T\Omega\right)^{\dagger}\right\|_F + \left\|\Sigma_2^{2q+1}\right\|_F\left\|\left( Q_{\parallel}^T\Omega\right)^{\dagger}\right\|+\right.\\
            \left. \left\|\Sigma_2^{2q+1}\right\|\left\|\left( Q_{\parallel}^T\Omega\right)^{\dagger}\right\|\cdot u\theta \Big| E_{\theta}\right\} \le e^{\frac{-u^2}{2}}.
        \end{split}
    \end{equation*}
    According to Eq.~\eqref{eqn:a11}, we have a explicit bound under the event $E_t$ as follows
    \begin{equation*}
        \begin{split}
            \mathbb{P}\left\{\left\|\Sigma_2^{2q+1}Q_{\perp}^T\Omega\cdot \left(Q_{\parallel}^T\Omega\right)^{\dagger}\right\| > \left\|\Sigma_2^{2q+1}\right\|\sqrt{\frac{12m}{l-m}}\cdot \theta + \left\|\Sigma_2^{2q+1}\right\|_F\frac{e\sqrt{l}}{l-m+1}\cdot \theta+\right.\\
            \left. \left\|\Sigma_2^{2q+1}\right\|\frac{e\sqrt{l}}{l-m+1}\cdot u\theta \Big| E_{\theta}\right\} \le e^{\frac{-u^2}{2}}.
        \end{split}
    \end{equation*}
    With the fact Eq.\eqref{eqn:a12}, we can remove the conditioning and obtain
    \begin{equation}
        \begin{split}
        \mathbb{P}\left\{\left\|\Sigma_2^{2q+1}Q_{\perp}^T\Omega\cdot \left(Q_{\parallel}^T\Omega\right)^{\dagger}\right\| > \left\|\Sigma_2^{2q+1}\right\|\sqrt{\frac{12m}{l-m}}\cdot \theta + \left\|\Sigma_2^{2q+1}\right\|_F\frac{e\sqrt{l}}{l-m+1}\cdot \theta+\right.\\
        \left. \left\|\Sigma_2^{2q+1}\right\|\frac{e\sqrt{l}}{l-m+1}\cdot u\theta \right\} \le 5\theta^{m-l} + e^{\frac{-u^2}{2}}.
        \end{split}
        \label{eqn:a13}
    \end{equation}
    Because of the SVDs presented in Eq.~\eqref{eqn:power_scheme_reduction}, we have
    \begin{equation}
        \begin{split}
        \left\|\Sigma_2^{2q+1}\right\| = \left(\sigma_{m+1}^t\right)^{2q+1},\quad \left\|\Sigma_2^{2q+1}\right\|_F \le \sqrt{d-m}\left\|\Sigma_2^{2q+1}\right\|.
        \end{split}
        \label{eqn:a16}
    \end{equation}
    After introducing Eq.\eqref{eqn:a13} and Eq.\eqref{eqn:a16} to Eq.\eqref{eqn:a22}, we have
    \begin{equation}
        \label{eqn:a19}
        \begin{split}
        \left\|\left(I - \mathcal{P}_{Y}\right)\tilde{A}\right\| \le & \left[\mathrm{max}\left\{\left|\frac{\sigma_1^t - \lambda}{\sigma_1^t}\right|, \left|\frac{\sigma_m^t - \lambda}{\sigma_m^t}\right|\right\}^{2q+1}\left(\sqrt{\frac{12m}{l-m}}\cdot \theta\right.\right.\\
        & \left.\left.+\frac{e\sqrt{l}}{l-m+1}\cdot \theta\sqrt{d-m}+\frac{e\sqrt{l}}{l-m+1}\cdot u\theta\right) \right] \cdot \left(\sigma_{m+1}^t\right)^{2q+1}\\
        & +\left[\mathrm{max}\left\{\left|\sigma_{m+1}^t - \lambda\right|, \left|\sigma_{d}^t - \lambda\right|\right\}\right]^{2q+1}.
        \end{split}
    \end{equation}
    with probability at least $1 - 5\theta^{m-l}-e^{\frac{-u^2}{2}}$. For making the upper bound clear, we set $\theta=e$, $u=\sqrt{2(l-m)}$, and have
    \begin{equation*}
        \begin{split}
            & e\sqrt{\frac{12m}{l-m}} + \frac{e^2\sqrt{2(l-m)l}}{l-m+1} + \frac{e^2\sqrt{(d-m)l}}{l-m+1}\\
            \le & e\sqrt{\frac{12m}{l-m}}+e^2\sqrt{\frac{2l}{l-m}} + \frac{e^2\sqrt{(d-m)l}}{l-m+1} \\
            \le & 17\sqrt{1+\frac{m}{l-m}} + \frac{8\sqrt{(d-m)l}}{l-m+1}\\
            =& 17\sqrt{\frac{l}{l-m}} + \frac{8\sqrt{(d-m)l}}{l-m+1}.
        \end{split}
    \end{equation*}
    Hence, we make some abbreviations as 
    \begin{equation*}
        \begin{split}
             \gamma = \mathrm{max}\left\{\left|\frac{\sigma_1^t - \lambda}{\sigma_1^t}\right|, \left|\frac{\sigma_m^t - \lambda}{\sigma_m^t}\right|\right\},\quad c = \left(17\sqrt{\frac{l}{l-m}}+\frac{8\sqrt{(d-m)l}}{l-m+1}\right)^{\frac{1}{2q+1}},  \\
        \end{split}
    \end{equation*}
    and Eq.\eqref{eqn:a19} can be reformulated and scaled as follows
    \begin{equation}
        \label{eqn:a20}
        \begin{split}
            \left\|\left(I - \mathcal{P}_{Y}\right)\tilde{A}\right\| \le &  \left[\left(\gamma c\right)^{2q+1}\left(\sigma_{m+1}^t\right)^{2q+1}\right]+\left[\mathrm{max}\left\{\left|\sigma_{m+1}^t - \lambda\right|, \left|\sigma_{d}^t - \lambda\right|\right\}\right]^{2q+1}\\
            \le & 2\mathrm{max}\left\{\gamma c \sigma_{m+1}^t, \left|\sigma_{m+1}^t - \lambda\right|, \left|\sigma_{d}^t - \lambda\right|\right\}^{2q+1}.
        \end{split}
    \end{equation}
    Finally, we introduce Eq.\eqref{eqn:a20} and Eq.\eqref{eqn:actual_error_bound} to Eq.\eqref{eqn:approximation_upper_bound} to have the following error upper bound with probability at least $1-6e^{m-l}\ (l\ge m+4)$ 
    \begin{equation*}
        \begin{split}
            & \left\|\hat{H}_B(x_t) - \nabla^2 f_{B_H}(x_t)\right\| \le 2^{\frac{2q+2}{2q+1}}\mathrm{max}\left\{\gamma c \sigma_{m+1}^t,\left|\sigma_{m+1}^t - \lambda\right|, \left|\sigma_d^t - \lambda\right|\right\} .
        \end{split}
    \end{equation*}
    Due to the fact $\lambda\le \min\{\sigma_{m+1}^t, \sigma_{\min}(U^T\nabla f_{B_H}(x_t)U)\}$, we have $\gamma< 1$ and
    \begin{equation*}
        \left\|\hat{H}_B(x_t) - \nabla^2 f_{B_H}(x_t)\right\| \le 2\left(34\sqrt{\frac{l}{l-m}}+\frac{16\sqrt{(d-m)l}}{l-m+1}\right)^{\frac{1}{2q+1}}\sigma_{m+1}^t.
    \end{equation*}
    That is to say, when the power iteration $q$ satisfy
    \begin{equation*}
        q\ge \Bigg\lceil \frac{1}{2}\log_{\frac{3}{2}} \left(34\sqrt{\frac{l}{l-m}}+\frac{16\sqrt{l}}{l-m+1}\cdot \sqrt{d-m}\right) \Bigg\rceil \propto \log d,
    \end{equation*}
    the Hessian approximation error has an upper bound as
    \begin{equation*}
        \left\|\hat{H}_B(x_t) - \nabla^2 f_{B_H}(x_t)\right\| \le 3\sigma_{m+1}^t.
    \end{equation*}
    
\end{proof}

\begin{corollary}
    \label{coro:ori_rlra_bound}
    Frame the hypotheses of Lemma~\ref{the:2}, and assume $l-m\ge 4$. Then
    \begin{equation}
        \left\|\left(I - \mathcal{P}_Y\right)A\right\|\le \left(1 + 17\sqrt{\frac{l}{l-m}}+\frac{8\sqrt{l}}{l-m+1}\cdot \sqrt{d-m}\right)^{\frac{1}{2q+1}}\sigma_{m+1},
    \end{equation}
    with failure probability at most $6e^{m-l}$.
\end{corollary}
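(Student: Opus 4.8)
\emph{Reduction to the $\lambda=0$ case.} The Corollary is exactly the specialization of the argument behind Lemma~\ref{thm:l2_norm_upper_bound} (i.e. Lemma~4.1) in which the auxiliary matrix $\tilde A$ is taken to be $A$ itself, so that every curvature‑ratio factor collapses to $1$. Concretely, since the power scheme produces $Y = (AA^{T})^{q}A\,\Omega = A^{2q+1}\Omega$ (using that $A=\nabla^{2}f_{B}(x_{t})$ is symmetric positive definite), I would first apply Lemma~\ref{lem:power_iteration_inequality} with the orthogonal projector $P = I-\mathcal{P}_{Y}$ and $M = A$ to get
\[
\left\|\left(I-\mathcal{P}_{Y}\right)A\right\|\le \left\|\left(I-\mathcal{P}_{Y}\right)(AA^{T})^{q}A\right\|^{1/(2q+1)}=\left\|\left(I-\mathcal{P}_{Y}\right)A^{2q+1}\right\|^{1/(2q+1)}.
\]
Thus it suffices to bound $\left\|\left(I-\mathcal{P}_{Y}\right)A^{2q+1}\right\|$.

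\emph{Applying Lemma~\ref{the:2} with $\tilde A = A^{2q+1}$.} Write $A = Q\Sigma Q^{T}$ and split off the top $m$ eigenpairs, $Q = [\,Q_{\parallel}\ \ Q_{\perp}\,]$, $\Sigma = \mathrm{diag}(\Sigma_{1},\Sigma_{2})$, exactly as in Lemma~\ref{the:2}; then $A^{2q+1} = Q\Sigma^{2q+1}Q^{T}$. Feeding $A^{2q+1}$ into Lemma~\ref{the:2} in the roles of both ``$A$'' and ``$\tilde A$'' (so $\Sigma_{i}\mapsto\Sigma_{i}^{2q+1}$ and $\tilde\Sigma_{i}\mapsto\Sigma_{i}^{2q+1}$), and using that $Q_{\parallel}^{T}\Omega$ has full row rank almost surely for a Gaussian $\Omega$, gives
\[
\left\|\left(I-\mathcal{P}_{Y}\right)A^{2q+1}\right\|^{2}\le \left\|\Sigma_{2}^{2q+1}Q_{\perp}^{T}\Omega\,(Q_{\parallel}^{T}\Omega)^{\dagger}\right\|^{2}\left\|\Sigma_{1}^{-(2q+1)}\Sigma_{1}^{2q+1}\right\|^{2}+\left\|\Sigma_{2}^{2q+1}\right\|^{2}.
\]
The key simplification relative to Lemma~4.1 is that the middle factor is $\|I\|=1$ (no $\lambda$‑dependent $\max\{\cdot\}$ survives), so taking square roots and using $\|\Sigma_{2}^{2q+1}\| = \sigma_{m+1}^{2q+1}$,
\[
\left\|\left(I-\mathcal{P}_{Y}\right)A^{2q+1}\right\|\le \left\|\Sigma_{2}^{2q+1}Q_{\perp}^{T}\Omega\,(Q_{\parallel}^{T}\Omega)^{\dagger}\right\| + \sigma_{m+1}^{2q+1}.
\]

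\emph{Gaussian tail bound and assembly.} The random term is controlled verbatim as in the proof of Lemma~4.1: by unitary invariance $Q_{\parallel}^{T}\Omega$ and $Q_{\perp}^{T}\Omega$ are independent standard Gaussian matrices; I condition on the regular event $E_{\theta}$ of Eq.~\eqref{eqn:a11}, whose complement has probability at most $5\theta^{m-l}$ by Lemma~\ref{lem:3} (this is where $l-m\ge4$ enters), estimate $\mathbb{E}[\,h(Q_{\perp}^{T}\Omega)\mid Q_{\parallel}^{T}\Omega\,]$ for $h(X)=\|\Sigma_{2}^{2q+1}X(Q_{\parallel}^{T}\Omega)^{\dagger}\|$ via Lemma~\ref{lem:4}, and bound its fluctuation via the Lipschitz concentration Lemma~\ref{lem:concentration_for_gaussian_matrix_functions}. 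Setting $\theta=e$, $u=\sqrt{2(l-m)}$, using $\|\Sigma_{2}^{2q+1}\|_{F}\le\sqrt{d-m}\,\sigma_{m+1}^{2q+1}$, and a union bound over the two failure events yields, with probability at least $1-6e^{m-l}$,
\[
\left\|\Sigma_{2}^{2q+1}Q_{\perp}^{T}\Omega\,(Q_{\parallel}^{T}\Omega)^{\dagger}\right\|\le\left(17\sqrt{\tfrac{l}{l-m}}+\tfrac{8\sqrt{l}}{l-m+1}\sqrt{d-m}\right)\sigma_{m+1}^{2q+1}.
\]
Adding $\sigma_{m+1}^{2q+1}$, pulling it out of the parenthesis (this is where the additive ``$+1$'' comes from), raising to the power $1/(2q+1)$, and combining with the power‑iteration reduction gives the claimed bound and failure probability.

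\emph{Main obstacle.} There is no genuinely new difficulty here: the entire Gaussian‑analysis machinery is already established in the proof of Lemma~4.1. The only points requiring care are the bookkeeping that $\tilde A = A$ eliminates both the $\gamma$ factor and the $\|\Sigma_{1}^{-1}\tilde\Sigma_{1}\|$ factor, so that no $\lambda$‑dependent maximum appears, and the observation that the ``$+1$'' in the final constant is precisely the normalized contribution of the $\|\Sigma_{2}^{2q+1}\|$ term; one should also note that the hypothesis $l-m\ge4$ is exactly what Lemmas~\ref{lem:4} and~\ref{lem:3} demand.
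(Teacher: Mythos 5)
Your proposal is correct and follows exactly the route the paper intends: the corollary is stated without proof as an immediate byproduct of the machinery in the proof of Lemma~4.1, and your specialization (take $\tilde A = A^{2q+1}$ so that $\gamma$ and $\|\Sigma_1^{-1}\tilde\Sigma_1\|$ collapse to $1$, reuse the same event $E_\theta$ with $\theta=e$, $u=\sqrt{2(l-m)}$, and keep the sum $a+1$ instead of $2\max\{\cdot\}$) reproduces both the stated constant and the $6e^{m-l}$ failure probability. No gaps.
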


\subsection{Proof of Theorem 4.2}
\label{thm:4_2}
\begin{proof}
    According to Step.~11 in \method Algorithm, we have
    \begin{equation}
        \label{eqn:thm42_eq1}
        \begin{split}
            \left\|x_{t+1} - x^*\right\|  & = \left\|x_t - \eta_t \hat{H}_B^{-1}(x_t)\nabla F(x_t) - x^*  \right\|\\
            &  = \left\|x_t - x^* - \eta_t \hat{H}_B^{-1}(x_t) \int_0^1{\nabla^2 F(x^*+ \tau(x_t - x^*))(x_t - x^*)d\tau}\right\|\\
            & \le \left\|x_t - x^*\right\| \cdot \underbrace{\left\|I - \eta_t \hat{H}_B^{-1}(x_t)\int_0^1{\nabla^2 F(x^*+ \tau(x_t - x^*))d\tau}\right\|}_{part\ 1}.
        \end{split}
    \end{equation}
    Then, the \emph{part 1} of Eq.\eqref{eqn:thm42_eq1} can be scaled as
    \begin{equation}
        \label{eqn:thm42_eq2}
        \begin{split}
            & \left\|I - \eta_t \hat{H}_B^{-1}(x_t)\int_0^1{\nabla^2 F(x^*+ \tau(x_t - x^*))d\tau}\right\|\\
            \le & \left\|I - \eta_t\hat{H}_B^{-1}(x_t)\nabla^2 f_{B}(x_t) + \eta_t\hat{H}_B^{-1}(x_t)\nabla^2 f_{B}(x_t) - \eta_t\hat{H}_B^{-1}(x_t)\nabla^2 F(x_t) + \eta_t\hat{H}_B^{-1}(x_t)\nabla^2 F(x_t) \right.\\
            & \left. - \eta_t \hat{H}_B^{-1}(x_t)\int_0^1{\nabla^2 F(x^*+ \tau(x_t - x^*))d\tau}\right\| \\
            \le & \underbrace{\left\|I - \eta_t \hat{H}_B^{-1}(x_t)\nabla^2 f_{B_H}(x_t)\right\|}_{part\ 1}+\eta_t \left\|\hat{H}_B^{-1}(x_t)\right\| \cdot \left(\underbrace{\left\|\nabla^2 f_{B}(x_t) - \nabla^2 F(x_t)\right\|}_{part\ 2} \right.\\
            & \left.+ \underbrace{\left\|\nabla^2 F(x_t) - \int_0^1{\nabla^2 F(x^* + \tau (x_t - x^*))d\tau}\right\|}_{part\ 3}\right).
        \end{split}
    \end{equation}
    
    \textbf{For the $part\ 1$ of Eq.\eqref{eqn:thm42_eq2}:} We have known that
    \begin{equation}
        \label{eq:part_1_relaxiation}
        \begin{split}
            & \left\|I - \eta_t\hat{H}_{B}^{-1}(x_t)\nabla^2f_B(x_t)\right\|^2\\
            =&\max_{\left\|v\right\|=1}v^T\left(I-\eta_t\hat{H}_B^{-1}(x_t)\nabla^2f_B(x_t)\right)\left(I-\eta_t\hat{H}_B^{-1}(x_t)\nabla^2f_B(x_t)\right)^Tv \\
            \mathop{=}^{\tiny\mathcircled{1}} & \max_{\left\|v\right\|=1} v^T\left(I - \eta_t \hat{H}_B^{-1}(x_t)\nabla^2 f_B(x_t) - \eta_t\nabla^2f_B(x_t)\hat{H}_B^{-1}(x_t) +\eta_t^2\hat{H}_{B}^{-1}(x_t)\left(\nabla^2f_B(x_t)\right)^2\hat{H}_B^{-1}(x_t)\right)v \\
            \mathop{=}^{\tiny\mathcircled{2}} & \max_{\left\|v\right\|=1} v^T\left(I - 2\eta_t\hat{H}_B^{-1}(x_t)\nabla^2f_B(x_t)\right)v + \eta_t^2v^T\hat{H}_{B}^{-1}(x_t)\left(\nabla^2f_B(x_t)\right)^2\hat{H}_B^{-1}(x_t)v\\
            \le & \max_{\left\|v\right\|=1} \underbrace{v^T\left(I - 2\eta_t\hat{H}_B^{-1}(x_t)\nabla^2f_B(x_t)\right)v}_{part\ 1.1} + \max_{\left\|v\right\|=1} \underbrace{\eta_t^2v^T\hat{H}_{B}^{-1}(x_t)\left(\nabla^2f_B(x_t)\right)^2\hat{H}_B^{-1}(x_t)v}_{part\ 1.2}.
        \end{split}
    \end{equation}
    where $\tiny\mathcircled{1}$ and $\tiny\mathcircled{2}$ establish because of the symmetry of $\hat{H}_B^{-1}(x_t)$ and $\nabla^2 f_B(x_t)$. 
    
    \textbf{With $part\ 1.1$} in Eq.~\eqref{eq:part_1_relaxiation} and the construction of $\hat{H}_B(x_t)$, we have
    \begin{equation}
        \label{eq:part_1_equivalence}
        \begin{split}
            &I-2\eta_t\hat{H}_B^{-1}(x_t)\nabla^2f_B(x_t)\\
            =&I-2\eta_t\left(U\left(U^T\nabla^2f_B(x_t)U\right)^{-1}U^T + \lambda^{-1}U_{\perp}U_{\perp}^T\right)\left(UU^T+U_{\perp}U_{\perp}^T\right)\nabla^2f_B(x_t)\left(UU^T+U_{\perp}U_{\perp}^T\right)\\
            =&I-2\eta_t\left(UU^T + \underbrace{\lambda^{-1}\left(U_{\perp}U_{\perp}^T\right)\nabla^2f_B(x_t)UU^T}_{part\ 1.1.1} + \underbrace{U\left(U^T\nabla^2f_B(x_t)U\right)^{-1}U^T\nabla^2f_B(x_t)U_{\perp}U_{\perp}^T}_{part\ 1.1.2} + \right.\\
            & \left.\underbrace{\lambda^{-1}U_{\perp}U_{\perp}^T\nabla^2f_B(x_t)U_{\perp}U_{\perp}^T}_{part\ 1.1.3}\right),
        \end{split}
    \end{equation}
    where we denote 
    \begin{equation}
        \label{eq:proof_important_abbrevation}
        \epsilon_H = \left\|U_{\perp}U_{\perp}^T\nabla^2 f_B(x_t)\right\|,\quad v=Ua + U_{\perp}b\ and\quad U_{\perp}U_{\perp}^T = I- UU^T
    \end{equation}
    for abbreviation. Besides, for the definition of $v$, there are
    \begin{equation}
        \label{eq:v_normalization_relaxation}
        v^TU = a^TU^TU,\quad v^TU_{\perp} = b^TU_{\perp}^TU_{\perp}.
    \end{equation}
    
    \emph{For $part\ 1.1.1$}, we have
    \begin{equation}
        \label{eq:techniques_for_1_1}
        \begin{split}
            &-2\eta_t\lambda^{-1}v^T\left(U_{\perp}U_{\perp}^T\right)\nabla^2f_B(x_t)UU^Tv\\
            \mathop{\le}^{\tiny\mathcircled{1}}& \frac{2\eta_t}{\lambda}\left(\frac{\left\|v^TU_{\perp}U_{\perp}^T\nabla^2 f_B(x_t)\right\|^2}{2\rho\lambda} + \frac{\rho\lambda \left\|UU^Tv\right\|^2}{2}\right) \\
            \mathop{\le}^{\tiny\mathcircled{2}}& 2\eta_t\left(\frac{\left\|b^TU_{\perp}^T\right\|^2\cdot \left\|U_{\perp}U_{\perp}^T\nabla^2 f_B(x_t)\right\|^2}{2\rho\lambda^2} + \frac{\rho\left\|UU^T\right\|^2\cdot \left\|Ua\right\|^2}{2}\right)\\
            =&2\eta_t\left(\frac{\epsilon_H^2}{2\rho\lambda^2}\cdot \left\|b\right\|^2 + \frac{\rho}{2}\cdot\left\|a\right\|^2\right),
        \end{split}
    \end{equation}
    where $\tiny\mathcircled{1}$ and $\tiny\mathcircled{2}$ establish because of the \emph{Young's inequality} and the \emph{Cauchy–Schwarz inequality}. 
    
    \emph{For $part\ 1.1.2$}, with the similar proof technique, we obtain 
    \begin{equation}
        \label{eq:techniques_for_1_2}
        \begin{split}
            &-2\eta_tv^TU\left(U^T\nabla^2f_B(x_t)U\right)^{-1}U^T\nabla^2f(x_t)U_{\perp}U_{\perp}^Tv\\
            \le  & 2\eta_t\left(\frac{\rho\lambda_m^2\left\|v^TU\left(U^T\nabla^2f_B(x_t)U\right)^{-1}U^T\right\|^2}{2} + \frac{\left\|\nabla^2f_B(x_t)U_{\perp}U_{\perp}^Tv\right\|^2}{2\rho\lambda_m^2}\right) \\
            \le & 2\eta_t\left(\frac{\rho\lambda_m^2\left\|a^TU^T\right\|^2\cdot\left\|U\left(U^T\nabla^2 f_B(x_t)U\right)^{-1}U^T\right\|^2}{2} + \frac{\left\|\nabla^2 f_B(x_t)U_{\perp}U_{\perp}^T\right\|^2 \cdot \left\|U_{\perp}b\right\|^2}{2\rho\lambda_m^2}\right)\\
            = & 2\eta_t\left(\frac{\rho}{2}\cdot \left\|a\right\|^2 + \frac{\epsilon_H^{2}}{2\rho\lambda_m^2}\cdot \left\|b\right\|^2\right).
        \end{split}
    \end{equation}
    
    \emph{For $part\ 1.1.3$}, we can easily find 
    \begin{equation}
        \label{eq:techniques_for_1_3}
        \begin{split}
            -\lambda^{-1}v^TU_{\perp}U_{\perp}^T\nabla^2f_B(x_t)U_{\perp}U_{\perp}^Tv \le -\sigma_d/\lambda \cdot \left\|b\right\|^2.
        \end{split}
    \end{equation}
    
    Submitting Eq.\eqref{eq:techniques_for_1_1}, Eq.\eqref{eq:techniques_for_1_2} and Eq.\eqref{eq:techniques_for_1_3} back into Eq.\eqref{eq:part_1_equivalence}, we obtain that, for any feasible $v$, the $part\ 1.1$ of  Eq.~\eqref{eq:part_1_relaxiation} satisfies
    \begin{equation}
        \label{eq:constants_relation_1}
        \begin{split}
            & v^T\left(I - 2\eta_t\hat{H}_B^{-1}(x_t)\nabla^2f_B(x_t)\right)v\\
            \le & v^Tv - 2\eta_t\left(\left(1 - \rho\right)\cdot \left\|a\right\|^2 + \left(\frac{\sigma_d}{\lambda} - \frac{\epsilon_H^2}{2\rho\lambda_m^2} - \frac{\epsilon_H^2}{2\rho\lambda^2}\right)\cdot \left\|b\right\|^2 \right)\\
            \mathop{=}^{\tiny\mathcircled{1}} & \left(1 - 2\eta_t\left(1 - \rho\right)\right)\cdot \left\|a\right\|^2 + \left(1 - 2\eta_t\left(\frac{\sigma_d}{\lambda} - \frac{\epsilon_H^2}{2\rho\lambda_m^2} - \frac{\epsilon_H^2}{2\rho\lambda^2}\right)\right)\cdot \left\|b\right\|^2\\
            \le & \max\left\{1 - 2\eta_t\left(1 - \rho\right), 1 - 2\eta_t\left(\frac{\sigma_d}{\lambda} - \frac{\epsilon_H^2}{2\rho\lambda_m^2} - \frac{\epsilon_H^2}{2\rho\lambda^2} \right)\right\},
        \end{split}
    \end{equation}
    where $\tiny\mathcircled{1}$ establish because of the fact $\left\|v\right\|^2 = \left\|a\right\|^2 + \left\|b\right\|^2$. For any specific $\lambda_m$ and $\sigma_{m+1}$, there is existing some $\alpha \le 0.5$ which satisfies the inequality $\lambda_m \le \alpha^{-1}\sigma_{m+1}$. To clarify the proof procedure, we assume 
    \begin{equation}
        \label{eq:simplify_settings}
        \begin{split}
            \alpha = 0.5\quad \mathrm{and}\quad \lambda_m \le \alpha^{-1}\sigma_{m+1} = 2\sigma_{m+1}.
        \end{split}
    \end{equation}
    Notice that, the proof techniques presented in the following are not constrained by such specific $\alpha$. 
    To minimize our convergence constant, we required that
    \begin{equation}
        \label{coff_eq}
        \begin{split}
            \frac{\sigma_d}{\alpha \lambda_m} - \frac{\epsilon_H^2}{2\rho\lambda_m^2} - \frac{\epsilon_H^2}{2\rho\alpha^2\lambda_m^2} = 1-\rho,
        \end{split}
    \end{equation}
    where we set $\lambda = \alpha\lambda_m$. Hence, we request that the positive root about $\rho$ in Eq.\eqref{coff_eq} should satisfy $\rho_{+}\in \left(0, 1\right)$. Then, we present the analytic form of $\rho_{+}$ as
    \begin{equation*}
        \begin{split}
            \rho_{+} = \frac{1+\sqrt{\left(1-\frac{\sigma_d}{\alpha\lambda_m}\right)^2+\frac{2\left(\alpha^2+1\right)\epsilon_H^2}{\alpha^2\lambda_m^2} }-\frac{\sigma_d}{\alpha\lambda_m} }{2},
        \end{split}
    \end{equation*}
    and the sufficient condition for $\rho\in \left(0, 1\right)$ is 
    \begin{equation}
        \label{coff_sufficient_condition}
        \begin{split}
            & \left(1 - \frac{\sigma_d}{\alpha\lambda_m}\right)^2 + \frac{2\left(\alpha^2+1\right)^2\epsilon_{H}^2}{\alpha^2\lambda_m^2} \le \left(1+\frac{\sigma_d}{\left(\alpha+\delta_{+}\right)\lambda_m}\right)^2 \\
            \Longleftrightarrow \quad & 1 - \frac{2\sigma_d}{\alpha\lambda_m} +\frac{\sigma_d^2}{\alpha^2\lambda_m^2} + \frac{2\left(\alpha^2+1\right)\epsilon_H^2}{\alpha^2\lambda_m^2} \le 1 + \frac{2\sigma_d}{\left(\alpha+\delta_{+}\right)\lambda_m}+\frac{\sigma_d^2}{\left(\alpha+\delta_{+}\right)^2\lambda_m^2}\\
            \Longleftrightarrow \quad & \frac{1}{\alpha^2\lambda_m^2}\left(\sigma_d^2 + 2\left(\alpha^2+1\right)\epsilon_H^2 - \frac{\alpha^2\sigma_d^2}{\left(\alpha+\delta_+\right)^2}\right) \le \left(\frac{2}{\alpha+\delta_+}+\frac{2}{\alpha}\right)\frac{\sigma_d}{\lambda_m}\\
            \Longleftrightarrow \quad & \left(1-\frac{\alpha^2}{\left(\alpha+\delta_+\right)^2}\right)\sigma_d^2 + 2\left(\alpha^2+1\right)\epsilon_H^2 \le 2\left(\alpha+\frac{\alpha^2}{\alpha+\delta_+}\right)\sigma_d\lambda_m \\
            \Longleftarrow \quad & \left(1-\frac{\alpha^2}{\left(\alpha+\delta_+\right)^2}\right)\sigma_d^2 + 2\left(\alpha^2+1\right)\epsilon_H^2 \le 2k\left(\alpha + \frac{\alpha^2}{\alpha+\delta_+}\right)\sigma_d^2, \quad \lambda_m\ge k\sigma_d\\
            \Longleftrightarrow \quad & \epsilon_H^2 \le \left[\frac{\frac{20}{3}k\alpha - \frac{5}{9}}{2\left(\alpha^2+1\right)}\right]\cdot\sigma_d^2,\quad \lambda_m\ge k\sigma_d,\quad \delta_+ = 0.5\alpha.
        \end{split}
    \end{equation}
    According to Corollary~\ref{coro:ori_rlra_bound}, the sufficient condition for the establishment of the last equation of Eq.~\eqref{coff_sufficient_condition} can be formulated as
    \begin{equation}
        \label{eq:final_sufficient_condition}
        \begin{split}
            \lambda_m \ge k\sigma_d \ge k\sqrt{\frac{2\left(\alpha^2+1\right)}{\frac{20}{3}k\alpha-\frac{5}{9}}}\cdot (1.2\sigma_{m+1})
        \end{split}
    \end{equation}
    with probability at least $1-6e^{m-l}(l\ge m+4)$. Combining Eq.\eqref{eq:final_sufficient_condition} and Eq.\eqref{eq:simplify_settings}, when $k=1$, we have
    \begin{equation}
        \label{eq:var_relations_inequ}
        \begin{split}
            &\alpha^{-1} =2 > \sqrt{1.296} =  1.2k\sqrt{\frac{2\left(\alpha^2+1\right)}{\frac{20}{3}k\alpha-\frac{5}{9}}}\\
            \Longrightarrow\quad  &\alpha^{-1}\sigma_{m+1}\ge \lambda_{m} \ge k\sigma_d \ge k\sqrt{\frac{2\left(\alpha^2+1\right)}{\frac{20}{3}k\alpha-\frac{5}{9}}}\cdot (1.2\sigma_{m+1}).
        \end{split}
    \end{equation}
    To summarize these equations, we have the following conclusion. When $k=1$, $\alpha=0.5$ and $\lambda_m \le 2\sigma_{m+1}$, we can set $\lambda = 0.5\lambda_m$, and have $\rho_{+} \ge 0$,
    \begin{equation}
        \label{eq:rho_range}
        \begin{split}
            \rho_+ &\mathop{\le}^{\tiny\mathcircled{1}} \frac{1}{2}\left(1+\left(1+\frac{\sigma_d}{1.5\alpha \lambda_m}\right) - \frac{\sigma_d}{\alpha\lambda_m}\right) = 1-\frac{\sigma_d}{3\lambda_m}
        \end{split}
    \end{equation}
    with probability at least $1-6e^{m-l}(l\ge m+4)$. In Eq.\eqref{eq:rho_range}, $\tiny\mathcircled{1}$ is from Eq.~\eqref{coff_sufficient_condition}.
    $Part\ 1.1$ in Eq.\eqref{eq:part_1_relaxiation} has
    \begin{equation}
        \label{eq:constants_relation}
        \begin{split}
            \max_{\left\|v\right\|=1} v^T\left(I - 2\eta_t\hat{H}_B^{-1}(x_t)\nabla^2f_B(x_t)\right)v &= 1 - 2\eta_t\left(1 - \rho_+\right) \le 1-\frac{2\sigma_d}{3\lambda_m}\eta_t.
        \end{split}
    \end{equation}
    
    With Similar to the tricks on the $part\ 1.1$, \textbf{for $part\ 1.2$} in Eq.\eqref{eq:part_1_relaxiation}, we have
    \begin{equation}
        \label{eq:part_1_2_relaxiation}
        \begin{split}
            &\max_{\left\|v\right\|=1} \eta_t^2v^T\hat{H}_{B}^{-1}(x_t)\left(\nabla^2f_B(x_t)\right)^2\hat{H}_B^{-1}(x_t)v= \eta_t^2\left\|\hat{H}_B^{-1}(x_t)\nabla^2f_B(x_t)\right\|^2 \\
            =&\eta_t^2\left(\max_{\left\|v\right\|=1} v^T(\hat{H}_B^{-1}(x_t)\nabla^2f_B(x_t))v \right)^2\\
            \mathop{=}^{\tiny\mathcircled{1}}&\eta_t^2 \left[\max_{\left\|v\right\|=1} v^T\left(\underbrace{UU^T}_{part\ 1.2.1} + \underbrace{\lambda^{-1}\left(U_{\perp}U_{\perp}^T\right)\nabla^2f_B(x_t)UU^T}_{part\ 1.2.2} + \right.\right.\\
            & \left.\left.\underbrace{U\left(U^T\nabla^2f_B(x_t)U\right)^{-1}U^T\nabla^2f(x_t)U_{\perp}U_{\perp}^T}_{part\ 1.2.3} +\underbrace{\lambda^{-1}U_{\perp}U_{\perp}^T\nabla^2f_B(x_t)U_{\perp}U_{\perp}^T}_{part\ 1.2.4}\right)v \right]^2.
        \end{split}
    \end{equation}
    where $\tiny\mathcircled{1}$ establish because of Eq.~\eqref{eq:part_1_equivalence}. Utilizing the similar proof techniques presented in Eq.~\eqref{eq:techniques_for_1_1} and Eq.~\eqref{eq:techniques_for_1_2} on $part\ 1.2.2$ and  $part\ 1.2.3$, we respectively have
    \begin{equation}
        \label{eq:part_1_2_relaxiation_detailed_1}
        \begin{split}
            \lambda^{-1}v^T\left(U_{\perp}U_{\perp}^T\right)\nabla^2f_B(x_t)UU^Tv &\le \frac{\epsilon_H^2}{2\rho_+\lambda^2}\left\|b\right\|^2 + \frac{\rho_+}{2}\left\|a\right\|^2,\\
            v^TU\left(U^T\nabla^2f_B(x_t)U\right)^{-1}U^T\nabla^2f(x_t)U_{\perp}U_{\perp}^Tv &\le \frac{\rho_+}{2}\cdot \left\|a\right\|^2 + \frac{\epsilon_H^{2}}{2\rho_+\lambda_m^2}\cdot \left\|b\right\|^2,
        \end{split}
    \end{equation}
    where $a$, $b$ and $\epsilon_H$ are introduced as Eq.~\eqref{eq:proof_important_abbrevation}. Besides $\rho_+$ satisfies Eq.~\eqref{coff_eq}. With such settings, we have the following inequalities for $part\ 1.2.1$ and $part\ 1.2.4$
    \begin{equation}
        \label{eq:part_1_2_relaxiation_detailed_2}
        \begin{split}
            &v^T U U^T v = \left\|a\right\|^2\\
            &\lambda^{-1}v^TU_{\perp}U_{\perp}^T\nabla^2f_B(x_t)U_{\perp}U_{\perp}^T v \\
            \le& \lambda^{-1}\left[\frac{\epsilon_H}{2}\left\|v^T U_{\perp}U_{\perp}^T\right\|^2 + \frac{1}{2\epsilon_H}\left\|\nabla^2f_B(x_t)U_{\perp}U_{\perp}^T\cdot U_{\perp}U_{\perp}^T v\right\|^2\right]\\
            \le & \lambda^{-1} \left[\frac{\epsilon_H}{2}\left\|v^T U_{\perp}U_{\perp}^T\right\|^2 + \frac{1}{2\epsilon_H}\left\|\nabla^2f_B(x_t)U_{\perp}U_{\perp}^T\right\|^2\cdot \left\|U_{\perp}U_{\perp}^T v\right\|^2\right] = \frac{\epsilon_H}{\lambda} \left\|b\right\|^2.
        \end{split}
    \end{equation}
    Therefore, with Eq.~\eqref{eq:part_1_2_relaxiation}, Eq.~\eqref{eq:part_1_2_relaxiation_detailed_1} and Eq.~\eqref{eq:part_1_2_relaxiation_detailed_2}, we can obtain that
    \begin{equation}
        \begin{split}
            & \max_{\left\|v\right\|=1} \eta_t^2v^T\hat{H}_{B}^{-1}(x_t)\left(\nabla^2f_B(x_t)\right)^2\hat{H}_B^{-1}(x_t)v\\
            \le & \eta_t^2\left[\max\left\{1+\rho_+, \frac{\epsilon_H}{\lambda} + \frac{\epsilon_H^2}{2\rho_+\lambda_m^2} + \frac{\epsilon_H^2}{2\rho_+\lambda^2}\right\}\right]^2.
        \end{split}
    \end{equation}
    With the condition shown in Eq.~\eqref{eq:var_relations_inequ} and our requirements, we have
    \begin{equation}
        \label{coff_eq2}
        \frac{\epsilon_H}{\lambda_m}\le \left[k\sqrt{\frac{2\left(\alpha^2+1\right)}{\frac{20}{3}k\alpha-\frac{5}{9}}}\right]^{-1} = 1/\sqrt{1.296} \le 1\quad \mathrm{and}\quad \frac{\sigma_d}{\lambda_m}\le 1.
    \end{equation}
    Thus, we can derive
    \begin{equation}
        \begin{split}
            &\frac{\epsilon_H}{\lambda} + \frac{\epsilon_H^2}{2\rho_+\lambda_m^2} + \frac{\epsilon_H^2}{2\rho_+\lambda^2} \mathop{=}^{\tiny\mathcircled{1}} \rho_+ -1 + \frac{\sigma_d}{\lambda} + \frac{\epsilon_H}{\lambda} \mathop{\le}^{\tiny\mathcircled{2}} 4\left(1+\rho_+\right),
        \end{split}
    \end{equation}
     where $\tiny\mathcircled{1}$ and $\tiny\mathcircled{2}$ establish because of Eq.~\eqref{coff_eq} and Eq.~\eqref{coff_eq2} respectively. That is to say, we can scale $part\ 1.2$ in Eq.\eqref{eq:part_1_relaxiation} as
     \begin{equation}
         \label{eq:constants_relation_2}
         \max_{\left\|v\right\|=1} \eta_t^2v^T\hat{H}_{B}^{-1}(x_t)\left(\nabla^2f_B(x_t)\right)^2\hat{H}_B^{-1}(x_t)v \le 16\eta_t^2 \left(1 + \rho_+\right)^2.
     \end{equation}
     Therefore, considering Eq.~\eqref{eq:constants_relation_1}, Eq.~\eqref{eq:rho_range} and Eq.~\eqref{eq:constants_relation_2}, we have $part\ 1$ in Eq.~\eqref{eqn:thm42_eq2}
     \begin{equation}
        \label{eq:coef_part_1}
         \begin{split}
             \left\|I - \eta_t \hat{H}_B^{-1}(x_t)\nabla^2 f_{B_H}(x_t)\right\| \le \sqrt{1-\left(\frac{\sigma_d}{3\lambda_m}\right)^2\eta_t}\le 1-\frac{\sigma_d^2}{18\lambda_m^2}\eta_t,
         \end{split}
     \end{equation}
     when $\eta_t$ satisfies
     \begin{equation*}
         \begin{split}
             & 16\eta_t^2\left(1+ \rho_+\right)^2 \le \eta_t \left(1+\rho_+\right)\left(1-\rho_+\right), \eta_t\ge 0 \\
             \Longleftrightarrow & 0\le \eta_t\le \frac{1-\rho_+}{16\left(1+\rho_+\right)} \mathop{\Longleftarrow}^{\tiny\mathcircled{1}} 0\le \eta_t \le \frac{\sigma_d}{96\lambda_m - 16\sigma_d}.
         \end{split}
     \end{equation*}
     The sufficient condition $\tiny\mathcircled{1}$ comes from the upper bound of $\rho_+$ shown in Eq.~\eqref{eq:rho_range}.
    
    \textbf{For the $part\ 2$ in Eq.\eqref{eqn:thm42_eq2}:} We denote the stochastic bias matrix $\Delta_i (x_t)$ and the sub-sample hessian matrix $\nabla^2 f_{B}(x_t)$ as follows
    \begin{equation}
        \label{eqn:thm42_eq0}
        \begin{split}
            & \Delta_i (x_t) = \nabla^2 f_{i}(x_t) - \mathbb{E}\left[\nabla^2 f_{i}(x_t)\right]=\nabla^2 f_{i}(x_t) - \nabla^2 F(x_t),\\
            & \nabla^2 f_{B}(x_t) = \sum_{i\in B}\nabla^2 f_{i}(x_t).
        \end{split}
    \end{equation}
    With Assumption 2.3 and the construction Eq.\eqref{eqn:thm42_eq4}, we can get some properties
    \begin{equation*}
        \begin{split}
            & \mathbb{E}\left[\left\|\Delta_i(x)\right\|\right]=0,\qquad  \max\limits_{i\le n} \left\|\nabla^2 f_{i}(x_t)\right\|\le K,\\
            & \max\limits_{i\le n} \left\|\Delta_i(x_t)\right\| \le 2K,\qquad\max\limits_{i\le n} \left\|\Delta_i(x_t)^2\right\| \le 4K^2.  \\
        \end{split}
    \end{equation*}
    According to the \emph{matrix Bernstein's inequality} given in Lemma \ref{lem:matrix_concentration_inequality_1}, for any available $x_t$ we have bound the part 2 in~Eq.\eqref{eqn:thm42_eq2} as 
    \begin{equation}
        \label{eqn:thm42_eq3}
        \mathbb{P}\left[\left\|\nabla^2 f_{B_H}(x_t) - \nabla^2 F(x_t)\right\|\ge \tilde{\epsilon}\right]\le 2d\exp\left\{-\frac{\tilde{\epsilon}^2 b}{16K^2}\right\}.
    \end{equation}
    Hence, for any $\tilde{\epsilon}$, if we want $\|\nabla^2 f_{B}(x_t) - \nabla^2 F(x_t)\|\le \tilde{\epsilon}$ with probability at least $1-e^{m-l}$,  the sampled size of $B$, $b$, should satisfy
    \begin{equation}
        \label{eq:thm42_batch_size_selection}
        b \ge \min\left\{\frac{16K^2}{\tilde{\epsilon}^2}\cdot \left(l-m+\log\left(2d\right)\right), N\right\}.
    \end{equation}
    
    \textbf{For the $part\ 3$ in Eq.\eqref{eqn:thm42_eq2}:} With Assumption 2.1, the $part\ 3$ in~Eq.\eqref{eqn:thm42_eq2} can be bounded as follows
    \begin{equation}
        \label{eqn:thm42_eq4}
        \begin{split}
        & \left\|\nabla^2 F(x_t) - \int_0^1{\nabla^2 F(x^* + \tau (x_t - x^*))d\tau}\right\|  = \left\|\int_0^1{\nabla^2 F(x_t) - \nabla^2 F(x^* + \tau (x_t - x^*))d\tau}\right\|\\
        \le & \int_0^1 \left\|\nabla^2 F(x_t) - \nabla^2 F(x^* + \tau (x_t - x^*))\right\| d\tau\\
        \le & \int_0^1 M_{b}(1-\tau)\left\|x_t - x^*\right\|d\tau = \frac{M_{b}}{2}\left\|x_t - x^*\right\|.
        \end{split}
    \end{equation}
    Hence, the Eq.\eqref{eqn:thm42_eq2} can be rewrited in the following with probability at least $1-6e^{m-l}(l\ge m+4)$ as
    \begin{equation}
        \begin{split}
            & \left\|I - \eta_t \hat{H}_B^{-1}(x_t)\int_0^1{\nabla^2 F(x^*+ \tau(x_t - x^*))(x_t - x^*)d\tau}\right\|\\
            \le & 1-\frac{\sigma_d^2}{18\lambda_m^2}\eta_t + \eta_t\left\|\hat{H}_B^{-1}(x_t)\right\| \left(\tilde{\epsilon} +\frac{M_{b}}{2}\left\|x_t - x^*\right\|\right)
        \end{split}
    \end{equation}
    due to Eq.~\eqref{eq:coef_part_1}, Eq.~\eqref{eqn:thm42_eq3}, Eq.~\eqref{eq:thm42_batch_size_selection} and Eq.~\eqref{eqn:thm42_eq4}. Since $\lambda\le \min\{\sigma_{m+1}^t, \sigma_{\min}(U^T\nabla f_{B}(x_t)U)\}$, we can conclude the convergence rate as
    \begin{equation}
        \begin{split}
            \left\|x_{t+1} - x^*\right\| & \le  c_1^t\left\|x_t - x^*\right\| + c_2^t \left\|x_t - x^*\right\|^2 \\
            c_1^t &=1-\frac{\sigma_d^2}{18\lambda_m^2}\eta_t + \frac{\tilde{\epsilon}\eta_t}{\lambda},\quad c_2^t =  \frac{M_{b}\eta_t}{2\lambda}
        \end{split}
        \label{eqn:a5}
    \end{equation}
    with probability at least $1-6e^{m-l}(l\ge m+4)$. For clarify the coefficients of convergence further, we have
    \begin{equation}
        \begin{split}
            \frac{\tilde{\epsilon}\eta_t}{\lambda} \le \frac{\sigma_d^2}{36\lambda_m^2} \Longleftarrow \quad b = \Theta\left(K^2\lambda_m^2\sigma_d^{-4}\log(d)\right) = \Theta\left(K^2\sigma_{m+1}^2\sigma_d^{-4}\log(d)\right).
        \end{split}
    \end{equation}
    Thus, the coefficients of convergence satisfy
    \begin{equation}
        \begin{split}
            c_1^t &=1-\frac{\sigma_d^2}{36\lambda_m^2}\eta_t,\quad c_2^t =  \frac{M_{b}\eta_t}{2\lambda}
        \end{split}
        \label{eqn:a5}
    \end{equation}
\end{proof}

\section{Additional Experiments}
\subsection{Exact Experimental setting on Logistic Regression}
\label{sec:ex_lr}
In this section, we detail our experiments on Logistic Regression. We set the objective function as
\begin{equation*}
        \min\limits_{x} -\frac{1}{N}\mathop{\sum}\limits_{i=1}^N\log\frac{1}{1 + \exp\left(-y_i\theta_i^Tx\right)} + \frac{a}{2} \left\|x\right\|^2.
\end{equation*}

\textbf{Datasets}: In this experiment, we use two datasets for the binary classification task including \emph{MNIST}~\cite{mnist19} and \emph{CovType}~\cite{covertype19}. 
\begin{itemize}
    \item \emph{MNIST}: The MNIST database is a dataset of handwritten digits. It has 60,000 training samples, and 10,000 test samples. Each image is represented by 28x28 pixels, each containing a value 0 - 255 with its grayscale value.
    \item \emph{CovType}: Predicting forest cover type from cartographic variables only (no remotely sensed data). The actual forest cover type for a given observation (30 x 30 meter cell) was determined from US Forest Service (USFS) Region 2 Resource Information System (RIS) data. Independent variables were derived from data originally obtained from US Geological Survey (USGS) and USFS data. Data is in raw form (not scaled) and contains binary (0 or 1) columns of data for qualitative independent variables (wilderness areas and soil types). 
\end{itemize}  

\textbf{Data Preprocessing}: For the two datasets proposed above, we will list our data preprocessing in the following
\begin{itemize}
    \item \emph{MNIST}: We first select the samples whose label is $4$ or $9$ (Logistic Regression is used in binary classification tasks). Then we normalize the samples as
    \begin{equation*}
        \theta_{ij} = \theta_{ij}\Bigg/\sqrt{\left(\mathop{\sum}\limits_{k=1}^d \theta^2_{ik}\right)}.
    \end{equation*}
    where $\theta_{ij}$ means the feature $j$ of the sample $i$.
    \item \emph{CovType}: We use the total samples, and normalized the samples as we did for MNIST.
\end{itemize}

\textbf{Preiteration:} As the preiteration requirement of LiSSA~\cite{agarwal2017second}, for the sake of fairness, we will perform 2 epoch SVRG~\cite{johnson2013accelerating} iteration for all the methods after initializing the decision variable $x_0$.

\textbf{Experimental Results}: We have shown our experimental results about Logistic Regression in our paper.

\textbf{Parameters Selection}: We have shown all the experimental parameters in Path: ./code/lr$\_$data/params

\subsection{Extra Experiments on Huber SVM}
\label{sec:ex_exp}
In this section, we conduct our experiments on Support Vector Machines. To satisfy assumptions of all baselines and \method, we set the loss function $l(.)$ in the SVM
\begin{equation*}
    \min\limits_{x}\ \frac{1}{2}a\left\|x\right\|^2 +\frac{1}{N}\mathop{\sum}\limits_{i=1}^{N}l(y_i, \theta_i^Tx)
\end{equation*}
as smoothed Huber loss presented as
$$
    l(y, \theta^Tx) = \begin{cases}
            0, &\text{if }\ y\cdot \theta^Tx>3/2,\\
            \frac{1}{2}(\frac{3}{2}- y\cdot \theta^Tx)^2, &\text{if }\ \left|1-y\cdot \theta^Tx\right|<1/2, \\
            1-y\cdot \theta^Tx, &\text{otherwise}.
    \end{cases}
$$
\textbf{Datasets}: In this experiment, we use two text classification datasets including \emph{webspam}~\cite{webb2006introducing} and \emph{Yelp Review Polarity}~\cite{yelpdataset19}. \begin{itemize}
    \item \emph{webspam}: For each instance, continuous 1 bytes are  treated as a word, and use word count as the feature value.
    In the end, there are 254 features for each sample, and the number of sample is about $160,000$.
    \item \emph{Yelp Review Polarity}: The Yelp reviews polarity dataset is constructed by considering stars 1 and 2 negative, and 3 and 4 positive. 
    For each polarity 280,000 training samples and 19,000 testing samples are take randomly. 
    In total there are 560,000 trainig samples and 38,000 testing samples.
\end{itemize}  

\textbf{Data Preprocessing}: For the two datasets proposed above, we will list our data preprocessing in the following
\begin{itemize}
    \item \emph{webspam}: We normalize the samples as
    \begin{equation*}
        \theta_{ij} = \theta_{ij}\Bigg/\sqrt{\left(\mathop{\sum}\limits_{k=1}^d \theta^2_{ik}\right)}.
    \end{equation*}
    where $\theta_{ij}$ means the feature $j$ of the sample $i$.
    \item \emph{Yelp Review Polarity}: We want all baselines (including the second-order methods) to converge in a relatively short period of time. Hence, we use \emph{CountVectorizer} to preprocess the data, and normalize each sample as we do in \emph{webspam}. Then, we uniformly pick $1000$ features to describe each sample. If the number of samples which are described as the zero vector is over a half of the total sample number, we will re-sample the features 
\end{itemize}

\textbf{Preiteration:} As the preiteration requirement of LiSSA~\cite{agarwal2017second}, for the sake of fairness, we will perform 2 epoch SVRG~\cite{johnson2013accelerating} iteration for all the methods after initializing the decision variable $x_0$.

\textbf{Experimental Results}: We first show the graph about the training loss versus running time in different condition numbers in Fig~\ref{fig:training_loss_fig}. Second, we give the Hessian approximation error to validate the quality of our approximate Hessian in Fig~\ref{fig:hessian_approximation_fig}.
\begin{figure*}[tb]
    \centering
    \includegraphics[width=1.1\textwidth]{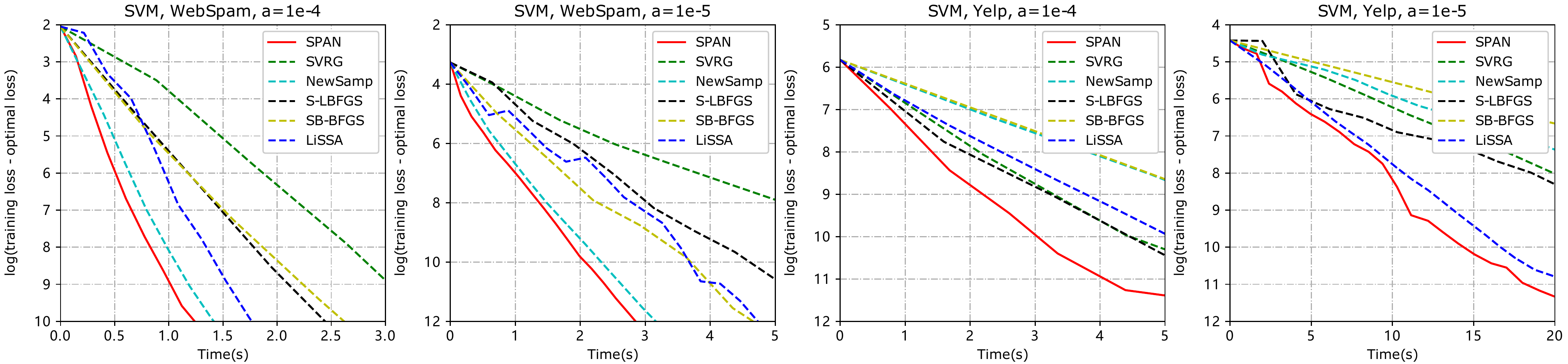}
    \vspace*{-20pt}
    \caption{Training loss versus running time. The first two columns are for webspam dataset. The last two are for Yelp Review Polarity dataset. Note \method achieves the best or near the best performance with respect to wall-clock time.}
    \label{fig:training_loss_fig}
\end{figure*}
\begin{figure*}[tb]
    \centering
    \includegraphics[width=1.1\textwidth]{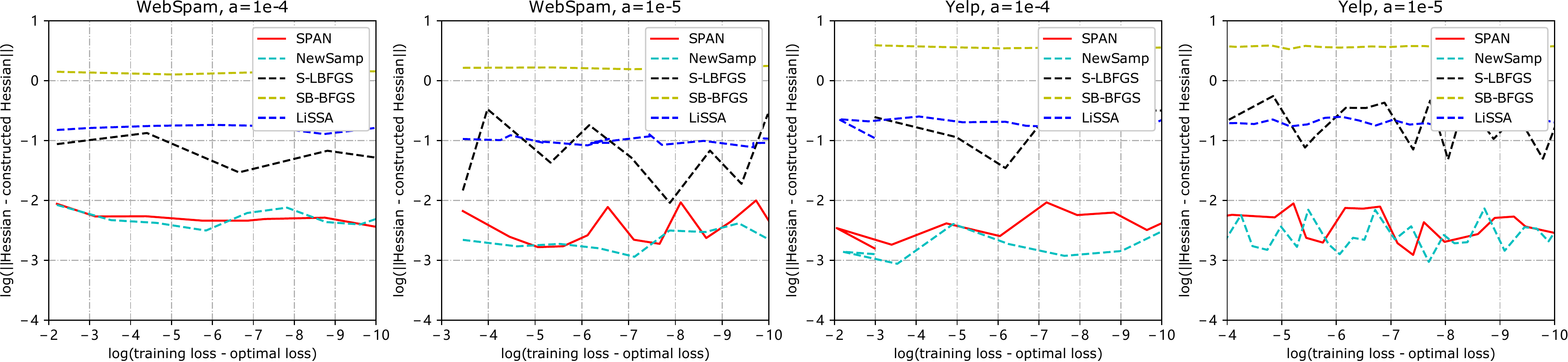}
    \vspace*{-20pt}
    \caption{Hessian approximation error. Note that \method achieves near lowest error.}
    \label{fig:hessian_approximation_fig}
\end{figure*}

\textbf{Parameters Selection}: We have shown all the experimental parameters in Path: ./code/svm$\_$data/params

\end{document}